\newcommand{\globalcolor}[1]{%
  \color{#1}\global\let\default@color\current@color
}
\definecolor{blush}{rgb}{0.87, 0.36, 0.51}
	\definecolor{brightcerulean}{rgb}{0.11, 0.67, 0.84}
	\definecolor{greenryb}{rgb}{0.4, 0.69, 0.2}
\newif\ifdark
\definecolor{darkred}{rgb}{0.9,0.2,0.2}
\definecolor{darkblue}{rgb}{0.7,0.3,1}
\definecolor{darkgreen}{rgb}{0.1,0.9,0.1}
\definecolor{franck}{rgb}{0,0.8,1}
\definecolor{pagebackground}{rgb}{.15,.21,.18}
\definecolor{pageforeground}{rgb}{.84,.84,.85}
\definecolor{symbols}{rgb}{0,0.7,1}
\colorlet{connection}{red!80!black}
\colorlet{boxcolor}{blue!50}
\definecolor{darkred}{rgb}{0.7,0.1,0.1}
\definecolor{darkblue}{rgb}{0.4,0.1,0.8}
\definecolor{darkgreen}{rgb}{0.1,0.7,0.1}
\definecolor{franck}{rgb}{0,0,1}
\definecolor{pagebackground}{rgb}{1,1,1}
\definecolor{pageforeground}{rgb}{0,0,0}
\colorlet{symbols}{blue!90!black}
\colorlet{connection}{red!30!black}
\colorlet{boxcolor}{blue!50!black}
\def\slash{\leavevmode\unskip\kern0.18em/\penalty\exhyphenpenalty\kern0.18em}
\def\dash{\leavevmode\unskip\kern0.18em--\penalty\exhyphenpenalty\kern0.18em}
\DeclareMathAlphabet{\mathbbm}{U}{bbm}{m}{n}
\DeclareFontFamily{U}{BOONDOX-calo}{\skewchar\font=45 }
\DeclareFontShape{U}{BOONDOX-calo}{m}{n}{
  <-> s*[1.05] BOONDOX-r-calo}{}
\DeclareFontShape{U}{BOONDOX-calo}{b}{n}{
  <-> s*[1.05] BOONDOX-b-calo}{}
\DeclareMathAlphabet{\mcb}{U}{BOONDOX-calo}{m}{n}
\SetMathAlphabet{\mcb}{bold}{U}{BOONDOX-calo}{b}{n}
\setlist{noitemsep,topsep=4pt,leftmargin=1.5em}
\DeclareMathAlphabet{\mathbbm}{U}{bbm}{m}{n}
\DeclareMathAlphabet{\mcb}{U}{BOONDOX-calo}{m}{n}
\SetMathAlphabet{\mcb}{bold}{U}{BOONDOX-calo}{b}{n}
\DeclareFontFamily{U}{mathx}{\hyphenchar\font45}
\DeclareFontShape{U}{mathx}{m}{n}{
      <5> <6> <7> <8> <9> <10>
      <10.95> <12> <14.4> <17.28> <20.74> <24.88>
      mathx10
      }{}
\DeclareSymbolFont{mathx}{U}{mathx}{m}{n}
\DeclareMathSymbol{\bigtimes}{1}{mathx}{"91}
\providecommand{\figures}{false}
{ \ifthenelse{\equal{\figures}{false}} {#1}{\[ {\rm Figure \ missing !} \]} }{}
\def\id{\mathrm{id}}
\def\CH{\mathcal{H}}
\def\CG{\mathcal{G}}
\def\CA{\mathcal{A}}
\def\CT{\mathcal{T}}
\tikzstyle{tinydots}=[dash pattern=on \pgflinewidth off \pgflinewidth]
\tikzstyle{superdense}=[dash pattern=on 4pt off 1pt]
\newcommand{\mcT}{\mathcal{T}}
\newcommand{\beq}{\begin{equation}}
\newcommand{\eeq}{\end{equation}}
\newcommand{\mfe}{\mathfrak{e}}
\newcommand{\mfL}{\mathfrak{L}}
\newcommand{\mft}{\mathfrak{t}}
\newcommand{\mfp}{\mathfrak{p}}
\newcommand{\mff}{\mathfrak{f}}
\def\Labp{\mathfrak{p}}
\def\Labhom{\mathfrak{t}}
\def\Lab{\mathfrak{L}}
\def\${|\!|\!|}
\newenvironment{DIFnomarkup}{}{} 
\newcommand{\rrightarrow}{{\to\hskip -4.9mm\raise 1pt\hbox{$\to$}}}
\newfont{\indic}{bbmss12}
\def\Nabla_#1{\nabla_{\!#1}}
    \pgfmathsetlength{\pgf@xb}{\pgfkeysvalueof{/pgf/outer xsep}}%
    \pgfmathsetlength{\pgf@yb}{\pgfkeysvalueof{/pgf/outer ysep}}%
\def\symbol#1{\textcolor{symbols}{#1}}
\def\decorate#1#2{
        \ifnum#2>0
    		\foreach \count in {1,...,#2}{
	       	let
				\p1 = (sourcenode.center),
                \p2 = (sourcenode.east),
				\n1 = {\x2-\x1},
				\n2 = {1mm},
				\n3 = {(1.3+0.6*(\count-1))*\n1},
				\n4 = {0.7*\n1}
			in 
        		node[rectangle,fill=symbols,rotate=30,inner sep=0pt,minimum width=0.2*\n2,minimum height=\n2] at ($(sourcenode.center) + (\n3,\n4)$) {}
				}
		\fi
        \ifnum#1>0
    		\foreach \count in {1,...,#1}{
	       	let
				\p1 = (sourcenode.center),
                \p2 = (sourcenode.east),
				\n1 = {\x2-\x1},
				\n2 = {1mm},
				\n3 = {(1.3+0.6*(\count-1))*\n1},
				\n4 = {0.7*\n1}
			in 
        		node[rectangle,fill=symbols,rotate=-30,inner sep=0pt,minimum width=0.2*\n2,minimum height=\n2] at ($(sourcenode.center) + (-\n3,\n4)$) {}
				}
		\fi
}
\tikzset{
    dectriangle/.style 2 args={
        triangle,
        alias=sourcenode,
        append after command={\decorate{#1}{#2}}
    },
    dectriangle/.default={0}{0},
}
\tikzset{
	cross/.style={path picture={ 
  		\draw[symbols]
			(path picture bounding box.south east) -- (path picture bounding box.north west) (path picture bounding box.south west) -- (path picture bounding box.north east);
		}},
root/.style={circle,fill=green!50!black,inner sep=0pt, minimum size=1.2mm},
        dot/.style={circle,fill=pageforeground,inner sep=0pt, minimum size=1mm},
        dotred/.style={circle,fill=pageforeground!50!pagebackground,inner sep=0pt, minimum size=2mm},
        var/.style={circle,fill=pageforeground!10!pagebackground,draw=pageforeground,inner sep=0pt, minimum size=3mm},
        kernel/.style={semithick,shorten >=2pt,shorten <=2pt},
        kernels/.style={snake=zigzag,shorten >=2pt,shorten <=2pt,segment amplitude=1pt,segment length=4pt,line before snake=2pt,line after snake=5pt,},
        rho/.style={densely dashed,semithick,shorten >=2pt,shorten <=2pt},
           testfcn/.style={dotted,semithick,shorten >=2pt,shorten <=2pt},
        renorm/.style={shape=circle,fill=pagebackground,inner sep=1pt},
        labl/.style={shape=rectangle,fill=pagebackground,inner sep=1pt},
        xic/.style={very thin,circle,draw=symbols,fill=symbols,inner sep=0pt,minimum size=1.2mm},
        g/.style={very thin,rectangle,draw=symbols,fill=symbols!10!pagebackground,inner sep=0pt,minimum width=2.5mm,minimum height=1.2mm},
        xi/.style={very thin,circle,draw=symbols,fill=symbols!10!pagebackground,inner sep=0pt,minimum size=1.2mm},
	xies/.style={very thin,rectangle,fill=green!50!black!25,draw=symbols,inner sep=0pt,minimum size=1.1mm},
	xiesf/.style={very thin,rectangle,fill=green!50!black,draw=symbols,inner sep=0pt,minimum size=1.1mm},
        xix/.style={very thin,crosscircle,fill=symbols!10!pagebackground,draw=symbols,inner sep=0pt,minimum size=1.2mm},
        X/.style={very thin,cross,rectangle,fill=pagebackground,draw=symbols,inner sep=0pt,minimum size=1.2mm},
	xib/.style={thin,circle,fill=symbols!10!pagebackground,draw=symbols,inner sep=0pt,minimum size=1.6mm},
	xie/.style={thin,circle,fill=green!50!black,draw=symbols,inner sep=0pt,minimum size=1.6mm},
	xid/.style={thin,circle,fill=symbols,draw=symbols,inner sep=0pt,minimum size=1.6mm},
	xibx/.style={thin,crosscircle,fill=symbols!10!pagebackground,draw=symbols,inner sep=0pt,minimum size=1.6mm},
	kernels2/.style={very thick,draw=connection,segment length=12pt},
	keps/.style={thin,draw=symbols,->},
	kepspr/.style={thick,draw=connection,->},
	krho/.style={thin,draw=symbols,superdense,->},
	krhopr/.style={thick,draw=connection,superdense},
	triangle/.style = { regular polygon, regular polygon sides=3},
	not/.style={thin,circle,draw=connection,fill=connection,inner sep=0pt,minimum size=0.5mm},
	diff/.style = {very thin,draw=symbols,triangle,fill=red!50!black,inner sep=0pt,minimum size=1.6mm},
	diff1/.style = {very thin,dectriangle={1}{0},fill=red!50!black,draw=symbols,inner sep=0pt,minimum size=1.6mm},
	diff2/.style = {very thin,dectriangle={1}{1},fill=red!50!black,draw=symbols,inner sep=0pt,minimum size=1.6mm},
		diffmini/.style = {very thin,rectangle,fill=black,draw=black,inner sep=0pt,minimum size=0.75mm},
	 kernelsmod/.style={very thick,draw=connection,segment length=12pt},
	 rec/.style = {very thin,rectangle,fill=black,draw=black,inner sep=0pt,minimum size=2mm},
	cerc/.style={very thin,circle,draw=black,fill=symbols,inner sep=0pt,minimum size=2mm},
	stars/.style={very thin,star,star points=6,star point ratio=0.5, draw=black,fill=red,inner sep=0pt,minimum size=0.7mm},
	>=stealth,
        }
        \tikzset{
root/.style={circle,fill=black!50,inner sep=0pt, minimum size=3mm},
        circ/.style={circle,fill=white,draw=black,very thin,inner sep=.5pt, minimum size=1.2mm},
        round1/.style={fill=white,outer sep = 0,inner sep=2pt,rounded corners=1mm,draw,text=black,thin,minimum size=1.2mm},
          circ1/.style={circle,fill=red!10,draw=red,very thin,inner sep=.5pt, minimum size=1.2mm},
        rect/.style={fill=white,outer sep = 0,inner sep=2pt,rectangle,draw,text=black,thin,minimum size=1.2mm},
        rect1/.style={fill=white,outer sep = 0,inner sep=2pt,rectangle,draw,text=black,thin,minimum size=1.2mm},
        round2/.style={fill=red!10,outer sep = 0,inner sep=2pt,rounded corners=1mm,draw,text=black,thin,minimum size=1.2mm},
       round3/.style={fill=blue!10,outer sep = 0,inner sep=2pt,rounded corners=1mm,draw,text=black,thin,minimum size=1.2mm}, 
        rect2/.style={fill=black!10,outer sep = 0,inner sep=2pt,rectangle,draw,text=black,thin,minimum size=1.2mm},
        dot/.style={circle,fill=black,inner sep=0pt, minimum size=1.2mm},
        dotred/.style={circle,fill=black!50,inner sep=0pt, minimum size=2mm},
        var/.style={circle,fill=black!10,draw=black,inner sep=0pt, minimum size=3mm},
        kernel/.style={semithick,shorten >=2pt,shorten <=2pt},
         diag/.style={thin,shorten >=4pt,shorten <=4pt},
        kernel1/.style={thick},
        kernels/.style={snake=zigzag,shorten >=2pt,shorten <=2pt,segment amplitude=1pt,segment length=4pt,line before snake=2pt,line after snake=5pt,},
		kernels1/.style={snake=zigzag,segment amplitude=0.5pt,segment length=2pt},
		rho1/.style={densely dotted,semithick},
        rho/.style={densely dashed,semithick,shorten >=2pt,shorten <=2pt},
           testfcn/.style={dotted,semithick,shorten >=2pt,shorten <=2pt},
           visible/.style={draw, circle, fill, inner sep=0.25ex},
        renorm/.style={shape=circle,fill=white,inner sep=1pt},
        labl/.style={shape=rectangle,fill=white,inner sep=1pt},
        xic/.style={very thin,circle,fill=symbols,draw=black,inner sep=0pt,minimum size=1.2mm},
        xi/.style={very thin,circle,fill=blue!10,draw=black,inner sep=0pt,minimum size=1.2mm},
	xib/.style={very thin,circle,fill=blue!10,draw=black,inner sep=0pt,minimum size=1.6mm},
	xie/.style={very thin,circle,fill=green!50!black,draw=black,inner sep=0pt,minimum size=1mm},
	xid/.style={very thin,circle,fill=symbols,draw=black,inner sep=0pt,minimum size=1.6mm},
	edgetype/.style={very thin,circle,draw=black,inner sep=0pt,minimum size=5mm},
	nodetype/.style={very thick,circle,draw=black,inner sep=0pt,minimum size=5mm},
	kernels2/.style={very thick,draw=connection,segment length=12pt},
clean/.style={thin,circle,fill=black,inner sep=0pt,minimum size=1mm},	not/.style={thin,circle,fill=symbols,draw=connection,fill=connection,inner sep=0pt,minimum size=0.8mm},
	>=stealth,
        }
\def\DeclareSymbol#1#2#3{%
	\expandafter\gdef\csname MH@symb@#1\endcsname{\tikzsetnextfilename{symbol#1}%
	\tikz[baseline=#2,scale=0.15,draw=symbols,line join=round]{#3}}%
	\expandafter\gdef\csname MH@symb@#1s\endcsname{\scalebox{0.75}{\tikzsetnextfilename{symbol#1}%
	\tikz[baseline=#2,scale=0.15,draw=symbols,line join=round]{#3}}}%
	\expandafter\gdef\csname MH@symb@#1ss\endcsname{\scalebox{0.65}{\tikzsetnextfilename{symbol#1}%
	\tikz[baseline=#2,scale=0.15,draw=symbols,line join=round]{#3}}}%
	}
\def\<#1>{\ifthenelse{\boolean{mmode}}{\mathchoice{\csname MH@symb@#1\endcsname}{\csname MH@symb@#1\endcsname}{\csname MH@symb@#1s\endcsname}{\csname MH@symb@#1ss\endcsname}}{\csname MH@symb@#1\endcsname}}
 \def\1{\mathbf{\symbol{1}}}
\def\one{\mathbf{1}}
\DeclareMathAlphabet{\mathpzc}{OT1}{pzc}{m}{it}
\def\eqref#1{(\ref{#1})}
\newcommand*{\bigcdot}{}
\DeclareRobustCommand*{\bigcdot}{%
  \mathbin{\mathpalette\bigcdot@{}}%
}
\newcommand*{\bigcdot@scalefactor}{.5}
\newcommand*{\bigcdot@widthfactor}{1.15}
\newcommand*{\bigcdot@}[2]{%
  \sbox0{$#1\vcenter{}$}
  \sbox2{$#1\cdot\m@th$}%
  \hbox to \bigcdot@widthfactor\wd2{%
    \hfil
    \raise\ht0\hbox{%
      \scalebox{\bigcdot@scalefactor}{%
        \lower\ht0\hbox{$#1\bullet\m@th$}%
      }%
    }%
    \hfil
  }%
}
\def\two{{\<generic>\kern0.05em\<genericb>}}
\def\twoI{{\<Ito>\kern0.05em\<Itob>}}
\def\mail#1{\burlalt{#1}{mailto:#1}}
\begin{document}

\title{Derivation of normal forms for dispersive PDEs via arborification}
\author{Yvain Bruned}
\institute{ 
 IECL (UMR 7502), Université de Lorraine
  \\
Email:\ \begin{minipage}[t]{\linewidth}
\mail{yvain.bruned@univ-lorraine.fr}.
\end{minipage}}

\maketitle

\begin{abstract}
In this work, we propose a systematic derivation of normal forms for dispersive equations using decorated trees introduced in \cite{BS}. The key tool is the arborification map which is a morphism from the Butcher-Connes-Kreimer Hopf algebra to the Shuffle Hopf algebra. It originates from Ecalle's approach to dynamical systems with singularities. This natural map has been used in many applications ranging from algebra, numerical analysis and rough paths.  This connection shows that Hopf algebras also appear naturally in the context of dispersive equations and provide insights into some crucial decomposition.
\end{abstract}

\setcounter{tocdepth}{1}
\tableofcontents

\section{Introduction}

Singular stochastic partial differential equations (SPDEs) have been developed in the previous years reaching a high degree of generality. A large class of singular parabolic SPDEs are covered by the theory of Regularity Structures invented by Martin Hairer in \cite{reg}. Surveys on this theory can be found in \cite{FrizHai,BaiHos}. One main component of this success is the use of Hopf algebras on decorated trees that are at the core of the Black Box used for the solution theory in \cite{BHZ,CH16,BCCH}. They are viewed as extensions of Hopf algebras coming from algebraic renormalisation  in quantum field theory \cite{CK1,CK2} and numerical analysis \cite{Butcher,CHV10}.
Another main component is to extend the theory of Rough Paths \cite{Lyons98,Gubinelli2004,Gub06} based on iterated integrals for locally expanding the solution of singular stochastic differential equations. In Rough Paths, one sees already Hopf algebra on words and trees similar to the ones cited above.

In recent years, dispersive equations have seen quick growth partially thanks to the use of sophisticated combinatorics. One can mention the work \cite{DNY22} introducing the random tensors seen by their authors as an analogue of Regularity Structures for solving dispersive equations with random initial data. Another direction is
Wave turbulence in \cite{DH23} where the authors have derived the wave kinetic equation from the cubic nonlinear Schrödinger (NLS) equation at the kinetic timescale. Both works used trees and Feynman diagrams for conducting advanced estimates but without Hopf algebras.
Trees like expansions for dispersive equations appeared also before these works such as in \cite{C07} and have been used in various contexts such as norm inflation in \cite{T17,K19}, normal forms \cite{GKO13,K19,KOY20} and rough solutions \cite{Gub11}. Let us mention that an alternative theory to Regularity Structures for singular SPDEs in \cite{GIP19} is to use paracontrolled calculus whose high order expansion is described in \cite{BB19}. So far, no Hopf algebras structures have been found for this decomposition.

The present work aims to make a connection between the combinatorics developed for dispersive equations and the one for singular SPDEs. We use therefore the formalism introduced in \cite{BS} where decorated trees combine the decorations from \cite{BHZ} and \cite{C07}. These decorated trees provide a systematic  way to derive high order low regularity schemes for dispersive equations. This formalism has been further used for generalised resonance schemes in \cite{ABBS22}, 
discretisation in Wave turbulence, \cite{ABBS24} symmetric schemes  in \cite{ABBMS23} and resonance schemes for SPDEs in \cite{AGB23}.
  Our focus is the derivation of normal forms for dispersive equations with these decorated trees. Poincaré-Dulac normal forms are used for rewriting a PDE of the form
  \begin{equs}
  \partial_t x = Ax + F(x) = Ax +
  \sum_{j=0}^{\infty}
  f_j(x)
  	\end{equs}
  into resonant monomials on which one can get suitable analytical estimates (see \cite{Arnold}). Here, $ A $ is a differential operator with distinct eigenvalues. One first performs a change of variable $ \tilde{x}(t) = e^{-tA} x(t) $ to get
  \begin{equs}
\partial_t \tilde{x} = e^{-tA} F(e^{tA}\tilde{x}).
  \end{equs}
Then, one has a change of variables $ \tilde{z}_k = \tilde{x} + \sum_{j=a}^k \tilde{y}_j $ such that
\begin{equs}
	\partial_t \tilde{z}_k 
	=  e^{-tA} G_k(e^{tA}\tilde{z}_k).
	\end{equs}
where the $ G_k $ maps are called resonant monomials. The final decomposition can be written into
\begin{equs}
	\tilde{x}(t) = \tilde{x}(0) - \sum_{j=1}^k \left( \tilde{y}_j(t) - \tilde{y}_j(0)\right)  + 
\int_0^t e^{-sA} G_k(e^{s A} \tilde{z}_k(s))  ds
\end{equs}
and then one can pass to the limit $ k \rightarrow + \infty $.  The point of the classical Poincaré-Dulac normal form is to renormalise the flow so that it
is expressed in terms of resonant terms. In the context of dispersive equation, one starts for Nonlinear Schrödinger eqution (NLS) with an equation of the form
\begin{equs} \label{reso_decomp}
		\begin{split}
			\partial_t v_k 
			& = - i 
			\sum_{\substack{k = -k_1 +k_2 + k_3\\ k_2\ne k_1, k_3} }
			e^{ i \Phi(\bar{k})t } 
			\bar{v}_{k_1} v_{k_2} v_{k_3}
		- 2  i 
		\sum_{k_1 \in \mathbb{Z} } 	\bar{v}_{k_1} v_{k_1} v_{k}.
		\end{split}
\end{equs}
where the $ v_k $ are the Fourier coefficients of $ v = e^{-i t \partial_x^2} u $ and $u$ is the solution of 
\begin{equs}
	i \partial_t u + \partial_x^2 u  =|u|^2 u, \quad u(0) = u_0, \quad (t,x) \in \mathbb{R} \times \mathbb{T}.
\end{equs}
The phase $ \Phi(\bar{k}) $ is given by
$	\Phi(\bar{k}): = \Phi(k, k_1, k_2, k_3) = k^2 + k_1^2 - k_2^2- k_3^2$. One has already performed a splitting according to the resonance in the equation \eqref{reso_decomp}: The first term is non-resonant with $ 	\Phi(\bar{k}) \neq 0 $ while the second term is resonant. Then, the main idea is to proceed by integration by parts on the non-resonant terms via the identity 
  \begin{equs}
	\Phi(\bar{k}) = \partial_t \left( \frac{e^{i \Phi(\bar{k}) t}}{i\Phi(\bar{k})}  \right).
\end{equs}
 One iterates this trick to produce the full normal form decomposition.
This approach has been very successful in proving unconditional well-posedness in \cite{GKO13,K19,KOY20} and also for the study of quasi-invariant Gaussian measures in \cite{OST18}. Let us mention that normal forms have been also used in the stochastic context in \cite{HRSZ23} for the three dimensional Zakharov system but with only one iteration of the procedure. Scattering of this system is obtained in \cite{GN14} from the same normal form inspired from \cite{S85}.

In fact, one keyword gives us the main algebraic insight on this decomposition which is integration by parts. Indeed, in the context of iterated integrals integration by parts leads to the concept of geometric Rough Paths that are characters for the shuffle Hopf algebra which is the natural Hopf algebra on the words. When one iterates Duhamel's formula for dispersive equations, one gets iterated integrals described by trees and the integration by parts allows one to look at words. This is reflected by the arborification map which is a surjective morphism between decorated trees and some words whose letters are the nodes of the previous trees. Such a map has been extensively used in	 
dynamical systems for classifying singularities via resurgent functions introduced by Jean Ecalle (see \cite{Ecalle1,Ecalle2,EV04,Cr09,FM}).
 Arborification played a crucial role in mould's expansions that correspond to expansions with analytical objects indexed by words. The algebraic properties of the Hopf algebra on the moulds have been explained in \cite{EFFM17}.
 At the level of Rough Paths, arborification appears in \cite{HK15} for establishing a connection between geometric and branched Rough Paths, in \cite{Br173} for renormalising rough SDEs, in \cite{U10,FU13} for describing an algorithm called Fourier normal ordering on iterated integrals. Let us mention that the chronicle trees introduced in \cite{GKO13} are very similar to the heap-ordered trees in \cite{FU13}. They are just a different way to describe the arborification. 
 The arborification also appears in numerical analysis in \cite{Murua2006,Murua2017}
   for efficient calculations involving Lie series in problems of control theory.
   
   Our main result is to provide a derivation of the normal form introduced in \cite{GKO13} combining the Fourier decorated trees introduced in \cite{BS} with Butcher series formalism, to the arborification coming from Ecalle's theory on the mould (see \cite{EV04,FM}). We are able in Theorem \ref{theorem_dev_N}  to provide a combinatorial insight on the main elements of the normal form. Let us mention that the proof of this main result relies on a new identity (up to the knowledge of the author) of the arborification in Proposition \ref{alternative_arb}. We also make a correspondence with a key character $\Psi$  presented in \cite{Cr09,FM} (see also Proposition \ref{character_prop})  whose modification $ \tilde{\Psi} $ is at the core of the normal form. This connection is not so surprising as iterating Duhamel's formula in Fourier space generates iterated integrals in time that are connected to the shuffle Hopf algebra on words. Integration by parts is well encoded via the arborification which is its translation into combinatorics. We hope to see more links in this direction connecting expansion for dispersive expansion and Hopf algebra techniques.

	Finally, let us outline the paper by summarising the content of its sections. In Section~\ref{Sec::2}, we introduce the shuffle Hopf algebra $ T(A) $ over an alphabet $A$. Its main analytical applications are iterated integrals over a simplex given in \eqref{iterated_integrals}. The character property of these integrals in Proposition~\ref{character_property} is the result of the smoothness of the path which allows integrations by parts for rewriting products of integrals on the simplex into an integral over the simplex. This mechanism is crucial for the sequel and we illustrate it with a simple example. Then, we also present the Butcher-Connes-Kreimer Hopf algebra $ \CH^A_{\text{\tiny{BCK}}} $ on forest decorated by $A$ together with the grafting product on decorated trees and the Butcher-Connes-Kreimer coproduct $\Delta_{\text{\tiny{BCK}}} $ via recursive and non-recursive formulae. The main definition of the section is the arborifaction map $\mathfrak{a}$ in \eqref{def_arborification}, 
 a surjective Hopf morphism between $ \CH^A_{\text{\tiny{BCK}}} $ and $ T(A) $.
 We propose an alternative recursive definition of this map in Proposition~\ref{alternative_arb} derived from the adjoint of the grafting product. It is  obtained from some type of co-morphism in Proposition~\ref{identi_c}. This definition is compared with the one of the coproduct $\Delta_{\text{\tiny{BCK}}} $ in \eqref{new_identity_arb}.	
 
 In Section~\ref{Sec::3}, we recall the Fourier decorated trees introduced in \cite{BS} in the context of numerical analysis. They are used to describe iteration of Duhamel's formula see Proposition~\ref{tree_series}. To state this result, one has to introduce various important definitions such as Definition\ref{decorated_trees} for the decorated trees, \eqref{space_deco} for the set of decorated trees coming from the dispersive equation, the symmetry factor in \eqref{symmetry_factor}, the character $\Pi$ in \eqref{def_Pi} that maps a decorated trees onto an oscillatory integral and the definition of the coefficients/elementary differentials in \eqref{def_Upsilon}. An important definition is Definition\ref{dom_freq} which introduces a recursive way to compute the oscillations associated with a decorated tree. We adapt the definition of the Butcher-Connes-Kreimer coproduct and the arborification in this new context see \eqref{BCK_new}  and \eqref{arbo_new}. We finish the section with the introduction of a map $ \tilde{\Psi} $ on these decorated trees in \eqref{def_char} which is actually a character for the shuffle product see Proposition \ref{character_prop}. A modification of this character denoted by $ \Psi $ in \eqref{Phi} is crucial for encoding the normal form.   
 
	In Section~\ref{Sec::4},  we first recall the normal form \eqref{N12} introduced in \cite{GKO13}.  We explain its derivation at the second order using an integration by parts. It allows us to make appear an oscillation at the denominator. Before stating our main result, in Proposition \ref{morphism_grafting}, we propose a reinterpretation of the derivation in time of the elementary differential as a combinatorial operation involving the grafting product on decorates trees previously introduced. Thanks to this Proposition, we can prove Theorem \ref{theorem_dev_N} which provides  combinatorial formulae   for the terms coming from the normal form \eqref{N12}. It involves the Butcher-type series defined on Fourier decorated trees, the arborification map and the map $ \Psi $ previously introduced.

 \subsection*{Acknowledgements}
 
 {\small
 	Y. B. thanks the organisers of the meeting of the ANR SMOOTH in  Brest May 2024, especially Ismaël Bailleul and Nicolay Tzvetkov whose questions and discussions provide motivation for this work.  
 	Y. B. gratefully acknowledges funding support from the European Research Council (ERC) through the ERC Starting Grant Low Regularity Dynamics via Decorated Trees (LoRDeT), grant agreement No.\ 101075208.
 	Views and opinions expressed are however those of the author(s) only and do not necessarily reflect those of the European Union or the European Research Council. Neither the European Union nor the granting authority can be held responsible for them.
 } 

\subsection*{Data Availibility} Data sharing not applicable to this article as no datasets were
generated or analysed during the current study.

\subsection*{Conflict of interest} The authors declare that they have no conflict of interest.

\section{Shuffle Hopf algebra}

\label{Sec::2}

  Given an alphabet $ A $, we consider the linear span of the words on this alphabet denoted by $ T(A) $. We set $ \varepsilon  $ as the empty word. The product on $ T(A)$ is the shuffle product defined by
  \begin{equs}
  \varepsilon \shuffle v=v\shuffle \varepsilon  =v, \quad (au\shuffle bv) = a(u\shuffle bv) + b(au\shuffle v)
  \end{equs}
for all $u,v\in T(A)$ and $a,b\in A$.
 The coproduct $\Delta:T(A)\to T(A)\otimes T(A)$ is the deconcatenation of words: 
\[ \Delta(a_1\dotsm a_n) = a_1\dotsm a_n\otimes \varepsilon  + \varepsilon \otimes a_1\dotsm a_n + \sum_{k=1}^{n-1}a_1\dotsm a_k\otimes a_{k+1}\dotsm a_n.\]
Equipped with the shuffle product and the deconcatenation coproduct, $ T(A) $  is a Hopf algebra whose antipode $ \CA $ is given by
\begin{equs}
	 \mathcal{M}_{\shuffle}\left( \CA \otimes \id \right) \Delta = \mathcal{M}_{\shuffle}\left( \id \otimes \CA \right) \Delta = \varepsilon \varepsilon^{*} 
\end{equs}
where  $ \mathcal{M}_{\shuffle}(u \otimes w)  = u \shuffle w$ and $ \varepsilon^* : T(A) \rightarrow \mathbb{R} $ is the counit satisfying $\varepsilon^*(u) = 1$ for $u= \varepsilon$ otherwise it is zero.

 The grading of $ T(A) $
 is given by the length of a word that is the number of its letters: $\ell(a_1\dotsm a_n) = n$. We denote by $ \CG_A $ the group of characters associated to $ T(A) $:
 \begin{equs}
 	\CG_A = \left\lbrace g \in (T(A))^*, \quad g( u \shuffle v ) = g(u) g(v)  \right\rbrace. 
 \end{equs}
The product associated to this group is   the convolution product $  *  $ given by
\begin{equs}
	g * f = \left( g \otimes f \right) \Delta
	\end{equs}
and the inverse is computed via the antipode $ \CA $
\begin{equs}
	g^{-1} = g(\CA \cdot).
\end{equs}
One main example of shuffle Hopf algebra is iterated integrals defined on the simplex $ 0 < t_1 < ... < t_n < t $. Given a smooth path $ t \mapsto X^a_t $ indexed by the letter $a \in A$, one can define the following map:
\begin{equs} \label{iterated_integrals}
	X_{st}(a_1 \cdots a_n) = \int_{s < t_1 < \cdots <t_n < t}  dX_{t_1}^{a_1} \cdots dX_{t_n}^{a_n}.
\end{equs}
One checks easily the following proposition
\begin{proposition} \label{character_property}
	The map $ X_{st} $ is a character on the shuffle Hopf algebra $T(A)$.
	\end{proposition}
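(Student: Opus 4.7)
The plan is to prove the character property $X_{st}(u \shuffle v) = X_{st}(u)\,X_{st}(v)$ by induction on the total length $\ell(u) + \ell(v)$, exploiting smoothness of the path $t \mapsto X_t^a$ so that the fundamental theorem of calculus and the Leibniz rule apply directly to the iterated integrals defined in \eqref{iterated_integrals}.

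The base case is immediate: if either word is empty then $\varepsilon \shuffle v = v$ and $X_{st}(\varepsilon) = 1$ by convention, so both sides agree. For the inductive step I would write $u = u' a$ and $v = v' b$ and invoke the \emph{right} form of the shuffle recursion
\begin{equs}
(u' a) \shuffle (v' b) = (u' \shuffle v)\, a + (u \shuffle v')\, b,
\end{equs}
which follows from the recursion stated in the text by partitioning all shuffles according to whether their last letter is $a$ or $b$. From the definition \eqref{iterated_integrals}, smoothness of $X$ yields, for any word $w = w' c$, the identity $\frac{d}{dt} X_{st}(w) = \dot X_t^c \, X_{st}(w')$ by differentiating the outermost integral. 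Applying this to $X_{st}(u \shuffle v)$ via the right shuffle recursion and then substituting the induction hypothesis gives
\begin{equs}
\frac{d}{dt} X_{st}(u \shuffle v) = \dot X_t^a \, X_{st}(u')\, X_{st}(v) + \dot X_t^b \, X_{st}(u)\, X_{st}(v'),
\end{equs}
which by Leibniz is exactly $\frac{d}{dt}\bigl(X_{st}(u)\, X_{st}(v)\bigr)$. Since both sides vanish at $t=s$ (any nonempty simplex collapses), integrating from $s$ to $t$ yields the claim.

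I do not anticipate a serious obstacle; this is essentially the classical Chen-type identity for iterated integrals against a smooth path. The only bookkeeping care required is to derive the right-form of the shuffle recursion from the left-form and to keep track of the correct endpoint in the differentiation. An alternative, more geometric route would be to decompose the product of simplices $\{s<\sigma_1<\cdots<\sigma_m<t\}\times\{s<\tau_1<\cdots<\tau_n<t\}$ as a disjoint union (up to measure zero) of simplices indexed by interleavings of the $\sigma$'s and $\tau$'s, and to apply Fubini; each interleaving corresponds to exactly one summand of $u \shuffle v$, giving the identity in one stroke.
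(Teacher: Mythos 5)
Your proof is correct, but it is organised differently from what the paper does. The paper does not give a full inductive proof: it states that the key mechanism is integration by parts and illustrates it only on the length-one case, writing $X_{st}(a)X_{st}(b)$ as an integral over the square $[s,t]^2$, splitting the square into the two triangles $\{r_1<r_2\}$ and $\{r_1>r_2\}$, and using an integration by parts to rewrite the second triangle as the simplex integral $X_{st}(ba)$, so that the product equals $X_{st}(ab)+X_{st}(ba)=X_{st}(a\shuffle b)$. Your primary argument is the infinitesimal version of this: you differentiate in $t$, use the right-handed shuffle recursion together with the Leibniz rule, and close an induction on $\ell(u)+\ell(v)$; this yields a complete proof for arbitrary words rather than an illustration, at the cost of having to derive the right-handed recursion from the left-handed one stated in the text (which you correctly justify by sorting shuffles according to their last letter). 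Your alternative route -- decomposing the product of simplices into interleavings up to a null set and applying Fubini -- is the classical Chen argument and is in fact the closest global analogue of the paper's domain-splitting computation. All the steps you use (the identity $\frac{d}{dt}X_{st}(w'c)=\dot X^c_t\,X_{st}(w')$, the vanishing of both sides at $t=s$, the applicability of the induction hypothesis to $u'\shuffle v$ and $u\shuffle v'$) are sound, so there is no gap.
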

The main idea of the proof is the fact that we can proceed with integration by parts. We explain this mechanism below for simple iterated integrals. Let's consider $a,b \in A$. Then, one has
\begin{equs}
	X_{st}(a) 	X_{st}(b) & = \int_{s}^t  dX_{r}^{a} \int_{s}^t  dX_{r}^{b}
	=  \int_{s}^t \int_{s}^t  dX_{r_1}^{a}   dX_{r_2}^{b}
	\\ & = \int_{s}^t \int_{s}^{r_2}  dX_{r_1}^{a}   dX_{r_2}^{b} + 
	\int_{s}^t \int_{r_2}^{t}  dX_{r_1}^{a}   dX_{r_2}^{b}.
\end{equs}  
By using integration by parts, one has
\begin{equs}
	\int_{s}^t \int_{r_2}^{t}  dX_{r_1}^{a}   dX_{r_2}^{b} & = -  X_{s}^{b}  \int_{s}^{t}  dX_{r_1}^{a} 	- \int_{s}^t   X_{r_2}^{b}  d \left( \int_{r_2}^{t} d X_{r_1}^{a} \right)   
	\\ & = -  X_{s}^{b}  \int_{s}^{t}  dX_{r_1}^{a} 	+ \int_{s}^t   X_{r_2}^{b}  d  X_{r_2}^{a} 
	\\ &=    \int_{s}^t \int_{s}^{r_2}  d X_{r_1}^{b}  d  X_{r_2}^{a}. 
\end{equs}
So in the end,
\begin{equs}
X_{st}(a) 	X_{st}(b) & =\int_{s}^t \int_{s}^{r_2}  dX_{r_1}^{a}   dX_{r_2}^{b} + 
\int_{s}^t \int_{s}^{r_2} dX_{r_1}^{b}  dX_{r_2}^{a}   
\\ & = 	X_{st}(ab) + 	X_{st}(ba) = 	X_{st}(a \shuffle b).
\end{equs} 
The integration by parts is possible because the path $ X^b$ is smooth. If $X^b$ does not satisfy the integration by parts, one can encode the product of iterated integrals via decorated trees. Therefore, we consider $ \CT^A $ the set of rooted trees with nodes decorated by $A$. 
We denote by 
$\CF^{A}$ the set of  forests composed of trees in $ \CT^A $. Any element $ F \in \CF^{A} $ can be decomposed into
\begin{equs}
	F = \tau_1 \cdot ... \cdot \tau_n
\end{equs} 
where the decorated trees $ \tau_i $ belong to $\CT^A$ and $ \cdot $ is the forest product, a commutative product.
  Any rooted tree $\tau \in \CT^A$, different from the empty tree $\one$, can be written in terms of the $B^{a}_+$-operators, $ a \in  A$. Indeed,  we have that $$\tau = B^{a}_+(\tau_1\cdot ... \cdot\tau_n)$$ where
  the operator $ B^{a}_+ $  connects the roots of the trees in the forest $\tau_1 \cdots \tau_n \in \CF^A$ to a new root decorated by $ a $.
We define $\CH^A = \langle \CF^A \rangle$ as the linear span of $\CF^A$. One can endow this vector space with a Hopf algebra structure where the product is given by the forest product. The coproduct is given by the Butcher-Connes-Kreimer coproduct defined recursively by
\begin{equs} \label{Connes-Kreimer}
	\Delta_{\text{\tiny{BCK}}} \tau 
	= \tau \otimes \mathbf{1}  + ( \mathrm{id} \otimes B^{a}_+)\Delta_{\text{\tiny{BCK}}} (\tau_1 \cdot ... \cdot \tau_n),
\end{equs}
and extended multiplicatively for the forest product.
One can also provide a non-recursive formula via admissible cuts given by
\begin{equs} \label{Connes_Kreimer_1}
\Delta_{\tiny{\text{BCK}}} \tau =  \tau \otimes \mathbf{1} + \sum_{c \in \scriptsize{\text{Adm}}(h) }   P^c(\tau) \otimes R^c(\tau).
\end{equs} 
Here, we have used $P^c(h)$ to denote the pruned forest that is formed by collecting all the edges at or above the cut. The term $R^c(\tau)$  corresponds to the "trunk", that is the subforest formed by the edges not lying above the ones upon which the cut was performed.   
In the sequel, we use the notation $ \CH^A_{\text{\tiny{BCK}}} $ when one equips $ \CH^A $ with the Butcher-Connes-Kreimer Hopf algebra structure. We consider an important product connected to this Hopf algebra. We define the grafting product $ \curvearrowright $ as
\begin{equation}
	\label{grafting_a}
	\sigma \curvearrowright \tau:=\sum_{v\in  N_{\tau} } \sigma \curvearrowright_v  \tau,
\end{equation}
where  $\sigma $ and $\tau$ are two decorated rooted trees, $ N_{\tau} $ is the set of nodes of $ \tau $ and where $\sigma \curvearrowright_v \tau$ is obtained by grafting the tree $\sigma$ on the tree $\tau$ at vertex $v$ by means of a new edge. We extend $ \curvearrowright $ to the empty tree $ \mathbf{1} $ by setting
\begin{equs}
	\sigma \curvearrowright \mathbf{1} = \mathbf{1}  \curvearrowright \sigma.
\end{equs}
 We will need to use the adjoint map of the grafting denoted by $ \curvearrowright^* $ and given by
\begin{equs}
\left\langle 	\curvearrowright^* \tau, \tau_1 \otimes \tau_2 \right\rangle  := \left\langle 	 \tau, \tau_1 \curvearrowright \tau_2 \right\rangle
\end{equs}
where the inner product is defined by
\begin{equs}
	\left\langle  \sigma, \tau \right\rangle = \delta_{\sigma, \tau} S(\tau).
\end{equs}
Here, $ \delta_{\sigma, \tau} $ is equal to one only if $ \sigma = \tau $ otherwise it is equal to zero. The symmetry factor $  S(\tau) $ is given for $ \sigma = B_+^a(\tau) $ and $ \tau = \tau_1^{\beta_1}\cdot ... \cdot\tau_n^{\beta_1} $ by
\begin{equs}
S(\sigma) = S(\tau), \quad	S(\tau) = \prod_{i=1}^n (\beta_i!) S(\tau_i)^{\beta_i}
\end{equs}
where the $\tau_i$ are pairwise disjoint and the  $ \tau_i^{\beta_i} $ correspond to the forests with the decorated tree $\tau_i$ repeated $ \beta_i $ times. The factor $S(\tau)$ is the number of automorphisms preserving the structure of the forest $ \tau $.
Then, one defines a map for moving from trees to words called Arborification. The process of
arborification is given by a surjective Hopf algebra morphism from
the Butcher–Connes–Kreimer Hopf algebra $ \mathcal{H}^A_{\text{\tiny{BCK}}} $
onto the
shuffle Hopf algebra 
defined over the alphabet $A$. 
The arborification morphism $  \mathfrak{a} :  \mathcal{H}^A_{\text{\tiny{BCK}}} \rightarrow T(A)$ 
is defined by
\begin{equs} \label{def_arborification}
	\mathfrak{a}(  B_+^a(\tau_1,...,\tau_n) )
	=  \left(  	\mathfrak{a}(\tau_1 ) \shuffle \cdots \shuffle 	\mathfrak{a}(\tau_n) \right) a
\end{equs}
As the arborification is a Hopf morphism, one has 
\begin{equs} \label{Hopf_morphism}
	\left( \mathfrak{a} \otimes \mathfrak{a} \right)	\Delta_{\text{\tiny{BCK}}} = \Delta \mathfrak{a}.
\end{equs}
Moreover, one has
\begin{equs}
	\mathfrak{a}(\sigma \cdot \tau ) = \mathfrak{a}(\sigma) \shuffle \mathfrak{a}(\tau).
\end{equs}
In the next proposition, we prove an identity of the type \eqref{Hopf_morphism} but for $\curvearrowright^*$. 
\begin{proposition} \label{identi_c}
	One has the following identity
	\begin{equs}
\left( P_{\bullet} \otimes \mathfrak{a} \right)	\curvearrowright^* = 	\Delta^{\!1}_c \mathfrak{a} 
	\end{equs}
where  $ P_{\bullet} $ is the projection onto the vector space generated by elements of the form $ \bullet_a $, $a \in A$ and sends $ \bullet_a $ to the letter $a$. The map $ \Delta^{\!1}_c $ is given by
\begin{equs}
	\Delta^{\!1}_c au = a \otimes u.
\end{equs}
	\end{proposition}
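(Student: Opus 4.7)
The plan is to identify both sides with the same leaf-extraction sum. Concretely, for every decorated tree $\tau$, I would prove
$$
\Delta^{\!1}_c\,\mathfrak{a}(\tau) \;=\; (P_\bullet \otimes \mathfrak{a})\,\curvearrowright^*\!\tau \;=\; \sum_{v \in L_\tau} \mathfrak{o}(v) \otimes \mathfrak{a}(\tau \setminus v),
$$
where $L_\tau$ is the set of leaves of $\tau$ and $\tau \setminus v$ is the tree obtained by deleting the leaf $v$. Both sides of the target identity being linear, it suffices to treat a single decorated tree $\tau$.

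For the right-hand equality, I would induct on the number of nodes of $\tau$. The base case $\tau = \bullet_a$ (whose root is its unique leaf) is immediate from $\mathfrak{a}(\bullet_a)=a$. For $\tau = B_+^a(\tau_1 \cdot \ldots \cdot \tau_n)$ with $n \geq 1$, I combine the recursive formula $\mathfrak{a}(\tau) = (\mathfrak{a}(\tau_1) \shuffle \cdots \shuffle \mathfrak{a}(\tau_n))\, a$ with the derivation-type compatibility
$$
\Delta^{\!1}_c(u_1 \shuffle \cdots \shuffle u_n) = \sum_{i=1}^n (\mathrm{id} \otimes \mathrm{sh}_i)\,\Delta^{\!1}_c(u_i),
\qquad \mathrm{sh}_i(w) = w \shuffle u_1 \shuffle \cdots \widehat{u_i} \cdots \shuffle u_n,
$$
valid whenever every $u_i$ is nonempty. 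Since each $\tau_i$ has at least one node, each $\mathfrak{a}(\tau_i)$ is a sum of nonempty words, so that the final letter $a$ of $\mathfrak{a}(\tau)$ is never picked up by $\Delta^{\!1}_c$. Applying the inductive hypothesis to each $\tau_i$ and using that the leaves of $\tau$ decompose as the disjoint union of the leaves of the $\tau_i$'s (the root of $\tau$ is not a leaf when $n\geq 1$) yields the claimed formula.

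For the left-hand equality, the projection $P_\bullet$ keeps only pairs $(\bullet_b, \sigma)$ in $\curvearrowright^*\!\tau$. By the definition of the adjoint, the coefficient of $\bullet_b \otimes \sigma$ equals $\langle \tau, \bullet_b \curvearrowright \sigma \rangle / S(\sigma)$, and the expansion $\bullet_b \curvearrowright \sigma = \sum_{w \in N_\sigma} \bullet_b \curvearrowright_w \sigma$ produces $\tau$ exactly when $\tau$ has a leaf decorated by $b$ whose removal yields $\sigma$. The duality identity
$$
S(\tau)\,\bigl[\tau : \bullet_b \curvearrowright \sigma\bigr] = S(\sigma)\,\#\bigl\{v \in L_\tau : \mathfrak{o}(v) = b,\ \tau\setminus v \simeq \sigma\bigr\}
$$
between grafting multiplicities and leaf-removal multiplicities then gives $(P_\bullet \otimes \mathrm{id})\,\curvearrowright^*\!\tau = \sum_{v \in L_\tau} \mathfrak{o}(v) \otimes (\tau\setminus v)$, and composition with $\mathrm{id} \otimes \mathfrak{a}$ completes the identification.

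The main technical point is this grafting/leaf-removal duality: when $\tau$ or $\sigma$ has nontrivial automorphisms, several leaves of $\tau$ may produce isomorphic quotients while several grafting positions on $\sigma$ may produce isomorphic copies of $\tau$, and reconciling the two counts through the ratio $S(\tau)/S(\sigma)$ is a standard orbit-counting argument on the relevant automorphism groups. It is the place where the bookkeeping must be done with care, but once it is established, both sides of the proposition are identified with the same explicit leaf-extraction sum and the proof is complete.
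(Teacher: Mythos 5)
Your proof is correct, and it reorganises the argument rather than merely reproducing it. The paper proceeds by induction on $\tau$ using an (unproved but asserted) recursive formula for $\curvearrowright^{*}$, namely $\curvearrowright^{*}\tau=\sum_j\bigl(\id\otimes B_+^a(\tau_1,\dots,\cdot,\dots,\tau_n)\bigr)\curvearrowright^{*}\tau_j+\tau\otimes\one$, and then matches this recursion against the recursion satisfied by $\Delta^{\!1}_c\,\mathfrak{a}$. You instead exhibit a common closed form, the leaf-extraction sum $\sum_{v\in L_\tau}\mathfrak{o}(v)\otimes\mathfrak{a}(\tau\setminus v)$, and identify each side with it separately. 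Your treatment of $\Delta^{\!1}_c\,\mathfrak{a}(\tau)$ (induction via $\mathfrak{a}(B_+^a(\tau_1\cdots\tau_n))=(\mathfrak{a}(\tau_1)\shuffle\cdots\shuffle\mathfrak{a}(\tau_n))a$ plus the fact that $\Delta^{\!1}_c$ of a shuffle picks the first letter of one factor) is in substance the same as the paper's closing identity. Where you genuinely diverge is in handling the adjoint: rather than the recursion, you compute $(P_\bullet\otimes\id)\curvearrowright^{*}\tau$ directly from the definition of $\curvearrowright^{*}$ via the symmetry-factor duality $S(\tau)\,[\tau:\bullet_b\curvearrowright\sigma]=S(\sigma)\,\#\{v\in L_\tau:\mathfrak{o}(v)=b,\ \tau\setminus v\simeq\sigma\}$; I checked this identity on several examples with nontrivial automorphisms and it is the standard adjointness of grafting and single-edge cutting. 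Each approach defers one combinatorial lemma (the paper: its recursive formula for $\curvearrowright^{*}$; you: the duality count), and these two lemmas are essentially equivalent in content, so the level of rigour is comparable. What your version buys is an explicit combinatorial description of $(P_\bullet\otimes\mathfrak{a})\curvearrowright^{*}$ as leaf removal, which makes the interpretation of Proposition \ref{alternative_arb} (``cut one leaf, it becomes the first letter'') transparent; what the paper's version buys is a formulation that avoids any bookkeeping with $S(\tau)$ altogether.
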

\begin{proof}
We proceed by induction on the size of $\tau$.	The property is obvious for $\tau = \bullet_a$.
	One first observes that the map $ \curvearrowright^* $ as the following recursive formula
	\begin{equs}
 \curvearrowright^{*} \tau = \sum_{j=1}^n \left( \id  \otimes B_+^a(\tau_1,...,\tau_{j-1}, \cdot, \tau_{j+1},\tau_n)  \right) \curvearrowright^{*} \tau_j + \tau \otimes \one.
	\end{equs}
This formula means that we perform a cut with $ \curvearrowright^{*} $ and therefore we select a branch inside one of the $\tau_j$.
Then,  one obtains
\begin{equs}
	&\left( P_{\bullet} \otimes \mathfrak{a} \right)	\curvearrowright^* \tau   = \sum_{j=1}^n \left(  P_{\bullet}  \otimes \mathfrak{a}( B_+^a(\tau_1,...,\tau_{j-1}, \cdot, \tau_{j+1},\tau_n) ) \right) \curvearrowright^{*} \tau_j
	\\ & =  \sum_{j=1}^n \left( \id \otimes   (\mathfrak{a}(\tau_1) \shuffle ... \shuffle \mathfrak{a}(\tau_{j-1}) \shuffle \cdot \shuffle \mathfrak{a}(\tau_{j+1}) \shuffle  \mathfrak{a}(\tau_n)  )a \right) \left( P_{\bullet} \otimes  \mathfrak{a} \right)\curvearrowright^{*} \tau_j.
	\end{equs}
We apply the induction hypothesis to $ \tau_j $
\begin{equs}
	\left( P_{\bullet} \otimes \mathfrak{a} \right) 	\curvearrowright^*\tau_j = 	\Delta^{\!1}_c \mathfrak{a}(\tau_j).
	\end{equs}
We conclude from the fact that
\begin{equs}
\Delta^{\!1}_c \mathfrak{a}(\tau) =	\sum_{j=1}^n \left( \id \otimes  (\mathfrak{a}(\tau_1) \shuffle ... \shuffle \mathfrak{a}(\tau_{j-1}) \shuffle \cdot \shuffle \mathfrak{a}(\tau_{j+1})  \shuffle  \mathfrak{a}(\tau_n)  )a \right) 	\Delta^{\!1}_c \mathfrak{a}(\tau_j).
\end{equs}
Indeed, $ \Delta^{\!1}_c  $ selects a letter among the different blocks $\mathfrak{a}(\tau_i)$.
	\end{proof}

We want to provide an alternative definition of the arborification in the next proposition. This definition is crucial for the sequel.
 
\begin{proposition} \label{alternative_arb}
	The arborification morphism $  \mathfrak{a} :  \mathcal{H}^A_{\text{\tiny{BCK}}}  \rightarrow T(A)$ 
satisfies the following identity
	\begin{equs} \label{def_arborification_2}
		\mathfrak{a}(  \tau )
		= \mathcal{M}_{\tiny{\text{c}}}\left(  P_{\bullet} \otimes \mathfrak{a}   \right) \curvearrowright^{*} \tau.
	\end{equs}
where $ \mathcal{M}_{\tiny{\text{c}}}(u \otimes v)  = uv$ with $ u,v \in  T(A) $.
	`\end{proposition}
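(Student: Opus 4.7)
The plan is to deduce this identity directly from the preceding Proposition~\ref{identi_c}, which has already established the nontrivial combinatorial content. That proposition reads $(P_{\bullet} \otimes \mathfrak{a})\curvearrowright^{*} = \Delta^{\!1}_c\,\mathfrak{a}$, where $\Delta^{\!1}_c$ splits a non-empty word by isolating its first letter: $\Delta^{\!1}_c(au) = a \otimes u$ for $a \in A$, $u \in T(A)$. Composing both sides on the left with the concatenation map $\mathcal{M}_{\text{c}}$ transforms the right-hand side of \eqref{def_arborification_2} into $\mathcal{M}_{\text{c}} \circ \Delta^{\!1}_c \circ \mathfrak{a}(\tau)$, so the proposition will follow as soon as I show that $\mathcal{M}_{\text{c}} \circ \Delta^{\!1}_c$ acts as the identity on the image of $\mathfrak{a}$.

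The identity $\mathcal{M}_{\text{c}}\Delta^{\!1}_c = \id$ on the subspace of $T(A)$ spanned by words of length $\geq 1$ is immediate: $\mathcal{M}_{\text{c}}(a \otimes u) = au$ simply re-concatenates what $\Delta^{\!1}_c$ separated. Now for any non-empty tree $\tau$, the recursive definition \eqref{def_arborification} shows by a trivial induction that $\mathfrak{a}(\tau)$ is a linear combination of words of length equal to the number of nodes of $\tau$, hence of length at least one. Thus the identity applies term-by-term to each word appearing in $\mathfrak{a}(\tau)$, yielding $\mathcal{M}_{\text{c}}(P_{\bullet} \otimes \mathfrak{a})\curvearrowright^{*}\tau = \mathfrak{a}(\tau)$ as required.

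As a sanity check on the edge case $\tau = \bullet_a$, the recursive formula for $\curvearrowright^{*}$ used in the proof of Proposition~\ref{identi_c} degenerates to $\curvearrowright^{*}\bullet_a = \bullet_a \otimes \mathbf{1}$, so $(P_{\bullet} \otimes \mathfrak{a})\curvearrowright^{*}\bullet_a = a \otimes \varepsilon$, and $\mathcal{M}_{\text{c}}(a \otimes \varepsilon) = a = \mathfrak{a}(\bullet_a)$, matching the formula. The inductive structure is therefore entirely consistent.

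Since the entire content of \eqref{def_arborification_2} has been absorbed into Proposition~\ref{identi_c}, there is essentially no obstacle left to overcome; the real combinatorial work—matching the adjoint grafting on the left with the deconcatenation-at-the-head $\Delta^{\!1}_c$ on the right—was carried out in that earlier induction. If one instead prefers to avoid invoking Proposition~\ref{identi_c}, an alternative proof by induction on the number of nodes of $\tau$ would use the recursive formula $\curvearrowright^{*}B^a_+(\tau_1\cdots\tau_n) = \sum_{j=1}^n (\id \otimes B^a_+(\tau_1,\ldots,\cdot,\ldots,\tau_n))\curvearrowright^{*}\tau_j + \tau \otimes \mathbf{1}$, the inductive hypothesis $\mathcal{M}_{\text{c}}(P_{\bullet} \otimes \mathfrak{a})\curvearrowright^{*}\tau_j = \mathfrak{a}(\tau_j)$, and the observation that the shuffle $\mathfrak{a}(\tau_1) \shuffle \cdots \shuffle \mathfrak{a}(\tau_n)$ can be computed by summing, over $j$, the operation ``extract the first letter from $\mathfrak{a}(\tau_j)$ and shuffle the remainder with the other $\mathfrak{a}(\tau_i)$.'' The only bookkeeping subtlety in either route is keeping track of the extra factor $a$ appended by $B^a_+$; this factor sits at the rightmost position and is untouched by the head-extraction $\Delta^{\!1}_c$, so it carries through without modification.
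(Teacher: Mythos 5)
Your proof is correct and takes essentially the same route as the paper: both deduce the identity by composing Proposition~\ref{identi_c} with the concatenation map and then observing that $\mathcal{M}_{\text{c}}\Delta^{\!1}_c=\id$ on non-empty words. Your additional remarks (the check that $\mathfrak{a}(\tau)$ consists of words of positive length, the base case $\tau=\bullet_a$, and the sketched direct induction) are harmless elaborations of what the paper leaves implicit.
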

\begin{proof} 
	From Proposition~\ref{identi_c}, one has
	\begin{equs}
		\mathcal{M}_{\tiny{\text{c}}}\left(  P_{\bullet} \otimes \mathfrak{a}   \right) \curvearrowright^{*} \tau = \mathcal{M}_c \Delta^{\!1}_c \mathfrak{a}(\tau).
	\end{equs}
We conclude from the identity
\begin{equs}
	\mathcal{M}_c \Delta^{\!1}_c = \id.
	\end{equs}
	\end{proof}

The characterisation \eqref{def_arborification_2} corresponds to start the word with the left most letter. At the level of the decorated tree, it corresponds to cut one leaf that will be that letter. This is different from the original definition of the arborification which starts from the right most letter. A coproduct version of \eqref{def_arborification} is given by
\begin{equs} \label{arb}
		\mathfrak{a}(  \tau )
	= \mathcal{M}_{\tiny{\text{c}}}\left( \mathfrak{a}  \otimes P_{\bullet}   \right) \Delta_{\text{\tiny{BCK}}}  \tau,
\end{equs} 
as $  P_{\bullet} $ forces the trunk to be formed of only one node. Together with \eqref{def_arborification_2}, one has the following identity:
\begin{equs} \label{new_identity_arb}
	\mathcal{M}_{\tiny{\text{c}}}\left( \mathfrak{a}  \otimes P_{\bullet}   \right) \Delta_{\text{\tiny{BCK}}} = \mathcal{M}_{\tiny{\text{c}}}\left(  P_{\bullet} \otimes \mathfrak{a}   \right) \curvearrowright^{*} 
\end{equs}
which seems absent from the literature. This identity can be interpreted as when one flips $ \mathfrak{a} $ and $ P_{\bullet} $, one has to replace $ \Delta_{\text{\tiny{BCK}}} $ with $\curvearrowright^{*} $ which is a move from several cuts to a single cut.

In fact, the definition \eqref{arb} can be adapted to produce more morphisms between  $\mathcal{H}^A_{\text{\tiny{BCK}}}$ and   $T(B)$ where $B$ is some alphabet. One of these known morphisms is the Hairer-Kelly map $ \Psi_{\text{\tiny{HK}}}  $ introduced for moving from geometric to branched Rough Paths  in \cite{HK15}. 
  This map has been written in \cite[Def. 4, Sec. 6]{Br173} as the unique Hopf algebra morphism from $\mathcal{H}^A_{\text{\tiny{BCK}}}$ to the shuffle Hopf algebra $T(\CT^A)$ obeying: 
\begin{equs} \label{Hairer_Kelly}	\Psi_{\text{\tiny{HK}}} = 	\mathcal{M}_{\tiny{\text{c}}}(\Psi_{\text{\tiny{HK}}} \otimes P_{\one}) \Delta_{\text{\tiny{BCK}}} 
\end{equs}
where  $P_{\one}:=\mathrm{id}-\one^{*}$ is the augmentation projector, here $ \one^{*}(\one)  =1 $ and zero otherwise. This projector guarantees that the trunk is not empty and that one branch is cut. The main difference between the arborification and the Hairer-Kelly map is the size of the alphabet where one replaces $ A$ by $ \CT^A $. A similar proof as for the arborification allows to rewrite \eqref{Hairer_Kelly} into
\begin{equs}
	\Psi_{\text{\tiny{HK}}} =  	\mathcal{M}_{\tiny{\text{c}}}(  P_{\one} \otimes\Psi_{\text{\tiny{HK}}}) \curvearrowright^{*}.
\end{equs}

\section{Fourier decorated trees}

\label{Sec::3}

We recall the decorated trees introduced in \cite{BS}. Let  $\Lab$ a finite set  and  frequencies $ k_1,...,k_m \in \mathbb{Z}^{d}$. Here, $ \mathfrak{L} $ parametrises a family of polynomials $ (P_{\Labhom})_{\Labhom \in \Lab} $ in the frequencies that represent operators with constant coefficients in Fourier space.  
\begin{definition}		\label{decorated_trees}
	We define the set of decorated trees $\mcT$ as elements of the form $T_\mfe^{\mff}$ where 	
	\begin{itemize}	
		\item $ T $ is a non-planar rooted tree with root node $ \varrho $, edge set $ E_T $ and node set $ N_T $. We consider only planted trees which means that $ T $ has only one edge connecting its root $ \varrho $.		
		\item $ \mfe:E_T\rightarrow \mfL\times \{0,1 \} $ is the set of edge decorations $ \mfe(\cdot) = (\mft(\cdot), \mfp(\cdot)) $ where the first component selects the correct polynomial $ P_{\mft} $ when $ \mft \in \Lab $.		
		\item $ \mff:N_T\setminus\{\varrho\}\rightarrow \mathbb{Z}^d $ are node decorations  satisfying the relation for every inner node $ u $
		\begin{equs} \label{frequencies_identity}
			(-1)^{\mfp(e_u)}\mff(u)=\sum_{e = (u,v)\in E_T}(-1)^{\mfp(e)}\mff(v)
		\end{equs}
		where $ e_u$ is the edge outgoing $u$ of the form $ (w,u) $. From this definition, one
		can see that the node decorations  $(\mff(u))_{u \in L_T}$ determine the decorations of the inner nodes. We assume
		that the node decorations at the leaves are linear combinations of the $k_i$ with coefficients in $ \lbrace -1,0,1 \rbrace$.
	\end{itemize}
\end{definition}

We define $ \CH $ as the linear span of $H$ the forests containing decorated trees in $ \CT $. The empty forest is denoted by $ \one $.
Before presenting the iterated integrals associated to these trees, we introduce a symbolic notation. An  edge decorated by $ o = (\mft, \mfp) $ with $ \mft \in \Lab $ is denoted by $ \mathcal{I}_{o} $. The symbol $ \mathcal{I}_{o} (\lambda_{k}
\cdot) : \CH \rightarrow \CH $ is viewed as the operation that merges all the roots of the trees composing the forest into one node decorated by $ k \in \mathbb{Z}^d$. We obtain
a decorated tree which is then grafted onto a new root with no decoration. If the condition \eqref{frequencies_identity} is not
satisfied on the argument, then $ \mathcal{I}_{o} (\lambda_{k}
\cdot) $  gives zero.
Below, we illustrate the symbolic notation on various examples. We suppose that $ \mfL = \{\mft_1,\mft_2\}$ and  $P_{\mathfrak{t}_1}(\lambda) = -\lambda^2$ and $P_{\mathfrak{t}_2}(\lambda) = \lambda^2$. Then, one has 
\begin{equs} \label{exemple_1}
	T =  \begin{tikzpicture}[scale=0.2,baseline=-5]
		\coordinate (root) at (0,0);
		\coordinate (tri) at (0,-2);
		\coordinate (t1) at (-2,2);
		\coordinate (t2) at (2,2);
		\coordinate (t3) at (0,3);
		\draw[kernels2,tinydots] (t1) -- (root);
		\draw[kernels2] (t2) -- (root);
		\draw[kernels2] (t3) -- (root);
		\draw[symbols] (root) -- (tri);
		\node[not] (rootnode) at (root) {};t
		\node[not] (trinode) at (tri) {};
		\node[var] (rootnode) at (t1) {\tiny{$ k_{\tiny{1}} $}};
		\node[var] (rootnode) at (t3) {\tiny{$ k_{\tiny{2}} $}};
		\node[var] (trinode) at (t2) {\tiny{$ k_3 $}};
	\end{tikzpicture}
=	  \CI_{(\mft_2,0)}(\lambda_{k}\CI_{(\mft_1,1)}(\lambda_{k_1})\CI_{(\mft_1,0)}(\lambda_{k_2}\CI_{(\mft_1,0)}(\lambda_{k_3})
\end{equs}
where $ k = -k_1 + k_2 + k_3 $. A blue edge encodes $ (\mathfrak{t}_2,0) $, a brown edge is used for $ (\mathfrak{t}_1,0) $ and a dashed brown edge is for $ (\mathfrak{t}_2,1) $. 
The frequency decorations appear on the leaves of the previous tree. One does not have to make explicit the frequency decoration  for the inner nodes as they are determined by the ones coming from the leaves.  We have also omitted the forest product $ \cdot $ as $ \CI_{(\mft_1,1)}(\lambda_{k_1})\CI_{(\mft_1,0)}(\lambda_{k_2}) $ is a shorthand notation for $ \CI_{(\mft_1,1)}(\lambda_{k_1}) \cdot \CI_{(\mft_1,0)}(\lambda_{k_2}) $.

\begin{definition} \label{dom_freq} We recursively define $\mathscr{F} :  H \rightarrow \mathbb{R}[\mathbb{Z}^d]$ as:
	\begin{equs}
\,	&	\mathscr{F}(\one)   = 0, \quad
		\mathscr{F}
		(F \cdot \bar F)  =\mathscr{F}(F) + \mathscr{F}(\bar F), \\
	&	\mathscr{F}\left( \CI_{(\Labhom,\Labp)}(  \lambda_{k}F) \right)   =     P_{(\Labhom,\Labp)}(k) +\mathscr{F}(F)  
	\end{equs}
where one has $
	P_{(\Labhom,\Labp)}(k) = (-1)^{\Labp} P_{\Labhom}((-1)^{\Labp}k).$
\end{definition}

We continue the example given in \eqref{exemple_1} and we compute $ \mathscr{F}(T) $:
\begin{equs}
	\mathscr{F}(T) & =  (-k_1+k_2+k_3)^{2} + (-k_1)^{2} - k_2^{2} - k_3^2.  
\end{equs}
Given a decorated tree  $ T_\mfe^{\mff} $, we define the order of a tree denoted by $ \vert \cdot \vert_{\text{\tiny{ord}}} $
by
\begin{equation*}
	\vert  T_\mfe^{\mff} \vert_{\text{\tiny{ord}}} = \sum_{e \in E_T }   \one_{\lbrace \mathfrak{t}(e) \in \Lab_{2} \rbrace}.
\end{equation*} 
which corresponds to the number of blue edges in a tree. These edges are associated to a subset $  \Lab_{2} \subset  \Lab $. In the sequel, we suppose that we have always $ \mfL = \{\mft_1,\mft_2\}$ and $ \Lab_{2} = \{\mft_2\} $. We denote also by $ o_i $ edge decorations of the form $(\mft_i,p_i)$ for $i \in \{1,2\}$.  For example in \eqref{exemple_1}, one has $ \vert  T \vert_{\text{\tiny{ord}}} = 1 $.
We define recursively the following set of decorated trees:
\begin{equs}
	& \CT_0 = \lbrace \CI_{(\Labhom_1,0)}( \lambda_k \CI_{(\Labhom_2,0)}( \lambda_k    T_1 \cdot T_2 \cdot  \tilde T_3  ) ), \CI_{(\Labhom_1,0)}(\lambda_k) \,  \\ & : \,T_1, T_2 \in \CT_0, \, \tilde{T}_3 \in \CT_1, \, k \in \mathbb{Z}^{d}  \rbrace \\
	& \CT_1  = \lbrace \CI_{(\Labhom_1,1)}( \lambda_k \CI_{(\Labhom_2,1)}( \lambda_k   T_1 \cdot T_2  \cdot \tilde T_3  ) ), \CI_{(\Labhom_1,1)}(\lambda_k) \,  \\ & \quad : \,T_1, T_2 \in \CT_1, \, \tilde{T}_3 \in \CT_0, \, k \in \mathbb{Z}^{d}  \rbrace.
\end{equs}
We define $   \CT_0^k  $ as the subspace of $ \CT_0 $ such that the frequency decoration on the nodes connected to the root is $ k $. For $ r\in\mathbb{N} $ and $ j \in \lbrace 0,1 \rbrace $ we set
\begin{equs} \label{space_deco}
	\mcT^{\leq r,k}_j= \cup_{m=0}^r \mcT^{m,k}_j, \quad \mcT^{m,k}_j=\{T_\mfe^\mff\in \mcT^k_j ,|T^\mff_\mfe|_{\text{\tiny{ord}}} =  m \}.
\end{equs}

In the sequel, we will also consider decorated trees that start with an edge decorated by $ \mathfrak{t}_2 $. They are given by
\begin{equs}
	& \hat{\CT}_0 = \lbrace  \CI_{(\Labhom_2,0)}( \lambda_k    T_1 \cdot T_2 \cdot  \tilde T_3  ) \in \CT_{\text{\tiny{non-reso}}} \, : \,T_1, T_2 \in \CT_0, \, \tilde{T}_3 \in \CT_1, \, k \in \mathbb{Z}^{d}  \rbrace \\
	& \hat{\CT}_1  = \lbrace  \CI_{(\Labhom_2,1)}( \lambda_k   T_1 \cdot T_2  \cdot \tilde T_3   ) \, \in \CT_{\text{\tiny{non-reso}}} : \,T_1, T_2 \in \CT_1, \, \tilde{T}_3 \in \CT_0, \, k \in \mathbb{Z}^{d}  \rbrace.
\end{equs}
where $ \CT_{\text{\tiny{non-reso}}} $ is the set of non-resonant trees. These trees are decorated trees satisfying the following extra conditions:
 For every inner node $ u $, one has
 \begin{equs} \label{reso_condition}
 	P_{(\Labhom(e_u),\Labp(e_u))}(\mff(u)) +	 \sum_{e = (u,v)\in E_T} P_{(\Labhom(e),\Labp(e))}(\mff(v))
 	\neq 0.
 \end{equs}
which correspond to a non-resonant condition.
As the same as before, we define the spaces
$ 	\hat{\mcT}^{\leq m,k}_j $ and $ \hat{\mcT}^{m,k}_j $ from 
$ \hat{\mcT}^{k}_j $. We also introduce the notation $ \hat{\CT}^{(m)}_1$ for the decorated trees in $ \hat{\CT}^{m}_j $ with $m$ blue edges but with no specific constraint on the frequency decoration at the node connected to the root.  

	We will now introduce the elementary differentials, which are required to present the expansion of dispersive equations solutions in terms of iterated integrals. We start by defining the symmetry factor of the tree $ T^{\mff}_{\mfe}\in\mcT $ by considering only the edge decoration, i.e, setting $ T^{\mff}_\mfe := T_\mfe $ and then setting $ S(\boldsymbol{1}) = 1 $ and working inductively defining 
\begin{equs} \label{symmetry_factor}
	S(T):=\prod_{i,j}S(T_{i,j})^{\gamma_{i,j}}\gamma_{i,j}!
\end{equs}
for the tree $ \prod_{i,j}\CI_{(\mft_{t_i}, p_i)}(T_{i,j})^{\gamma_{i,j}} $ where $T_{i,j} \neq T_{i,\ell}$ for $j \neq \ell$.
We now define the elementary differentials denoted by $ \Upsilon(T)(v) $ from two given polynomials $ p_0(v,\bar{v}) $ $ p_1(v,\bar{v}) $ as 
\begin{equs}
	T  & = 
	\CI_{(\Labhom_1,a)}\left( \lambda_k \CI_{(\Labhom_2,a)}( \lambda_k   \prod_{i=1}^n \CI_{(\Labhom_1,0)}( \lambda_{k_i} T_i) \prod_{j=1}^m \CI_{(\Labhom_1,1)}( \lambda_{\tilde k_j} \tilde T_j)  ) \right), \quad a \in \lbrace 0,1 \rbrace  
\end{equs}
by
\begin{equs} \label{def_Upsilon}
	\begin{aligned}
	\Upsilon(T)(v) \, & { :=}  \partial_v^{n} \partial_{\bar v}^{m} p_a(v,\bar v) \prod_{i=1}^n  \Upsilon( \CI_{(\Labhom_1,0)}\left( \lambda_{k_i}  T_i \right) )(v)  \\ & \prod_{j=1}^m \Upsilon( \CI_{(\Labhom_1,1)}( \lambda_{\tilde k_j}\tilde T_j ) )(v)
	\end{aligned}
\end{equs}
where 
\begin{equs} 
	\label{mid_point_rule}
	\begin{aligned}
		\Upsilon(\CI_{(\Labhom_1,0)}( \lambda_{k})  )(v)  \,  { :=}  v_k, \quad 
		\Upsilon(\CI_{(\Labhom_1,1)}( \lambda_{k})  )(v)  \, & { :=}  \bar{v}_k.
	\end{aligned}
\end{equs}
Above, we have used the notation:
\begin{equs}
	p_0(v,\bar{v}) = p(v,\bar{v}), \quad p_{1}(v,\bar{v}) = \overline{p(v, \bar{v})}. 
\end{equs}
The definition is similar if $ T$ has a similar form  starting by $	 \CI_{(\Labhom_2,a)}( \lambda_k \cdot)$ instead of $	\CI_{(\Labhom_1,a)}\left( \lambda_k \CI_{(\Labhom_2,a)}( \lambda_k \cdot) \right)$

 One can define the iterated integrals recursively based on the inductive construction of the decorated trees via a map $ \Pi :  \CH \rightarrow \mathcal{C} $. The space $  \mathcal{C} $ contains  functions of the form $ z \mapsto \sum_j Q_j(z) e^{i z P_j(k_1,...,k_n)} $  and the $ Q_j(z) $ are polynomials in
$ z $, the $ P_j $ are polynomials in  $ k_1,...,k_n $, and each $ Q_j $ depends implicitly on $k_i$. 
The map $ \Pi $ is defined for every forest $ F $ by
\begin{equs}
	\label{def_Pi}
	\begin{aligned}
		\Pi (\mathcal{I}_{o_1}(\lambda_{k} F))(t) &= e^{i t  P_{o_1}(k)}  (\Pi \bar{F})(t) \\
		\Pi (\mathcal{I}_{o_2}(\lambda_{k} F))(t) &= - i |\nabla|^\alpha(k)\int_0^t e^{i s P_{o_2}(k)} \Pi(F)(s)d s 
	\end{aligned}
\end{equs} 
for some fixed differential operator $ |\nabla|^\alpha(k) $ in Fourier space.
From \cite[Prop. 4.3]{BS},  the previous iterated integrals are used for expanding the following Duhamel's formula:
\begin{equs}
	u(t) = e^{i t \Delta} u(0) - i e^{it \Delta} \int_0^t e^{- i s \Delta} \left( \bar{u}(s) u(s)^2 \right) ds
\end{equs}
which is given in Fourier space as
\begin{equs} \label{duhamel_Fourier}	
	\begin{aligned}
			u_k(t) & = e^{-i t k^2} u_k(0) \\ &  - i \sum_{k=-k_1 + k_2 + k_3} e^{-it k^2} \int_0^t e^{ i s k^2} \left( \bar{u}_{k_1}(s) u_{k_2}(s) u_{k_3}(s) \right) ds
			\end{aligned}
\end{equs}
where the operator $ e^{it \Delta} $ is sent to $ e^{-it k^2} $ in Fourier space. Also, pointwise products are sent to convolution products on the frequencies. Here, one has
\begin{equs}
	p(v,\bar{v}) = v^2 \bar{v}, \quad \alpha =0.
\end{equs}
We start an expansion by:
\begin{equs}
		u_k(t) = e^{-i t k^2} u_k(0) + \mathcal{O}(t)
\end{equs}
and then we obtain
\begin{equs}
	\begin{aligned}
		u_k(t) & = e^{-i t k^2} u_k(0) \\ &  - i \sum_{k=-k_1 + k_2 + k_3} e^{-it k^2} \int_0^t e^{ i s (k^2+k_1^2-k_2^2-k_3^2)} \left( \bar{u}_{k_1}(0) u_{k_2}(0) u_{k_3}(0) \right) ds + \mathcal{O}(t^2)
	\end{aligned}
\end{equs}
In the next proposition, we recall \cite[Prop. 4.3]{BS} that shows how to encode iterations of Duhamel's formula with the decorated trees previously introduced.
\begin{proposition} \label{tree_series}
	The tree series given by 
	\begin{equs}\label{genscheme}
		U_{k}^{r}(t, v) =   \sum_{T \in \CT^{\leq r,k}_{0}} \frac{\Upsilon( T)(v)}{S(T)} (\Pi   T )(t)
	\end{equs}
	is the $k$th Fourier coefficient of a solution of $ \eqref{duhamel_Fourier} $ up to order $ r $ which means that one has
	\begin{equs}
		u_{k}(t) - 	U_{k}^{r}(t, v) = \mathcal{O}(t^{r+1}).
	\end{equs}
\end{proposition}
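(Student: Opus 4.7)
The plan is to proceed by induction on the order $r$, combining the recursive construction of $\mcT_0^{\leq r, k}$ with one application of Duhamel's formula \eqref{duhamel_Fourier} at each step. For the base case $r = 0$, the set $\mcT_0^{\leq 0, k}$ reduces to the single planted tree $T_0 = \mathcal{I}_{(\mft_1, 0)}(\lambda_k)$, for which one computes $\Upsilon(T_0)(v) = v_k$, $S(T_0) = 1$, and $\Pi(T_0)(t) = e^{-itk^2}$ (using $P_{(\mft_1, 0)}(k) = -k^2$). Hence $U_k^0(t, v) = e^{-itk^2} v_k$, which upon substituting $v = u(0)$ recovers the free evolution term, while the Duhamel integral in \eqref{duhamel_Fourier} is manifestly $\mathcal{O}(t)$.

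For the inductive step, assume the statement at order $r - 1$. Substitute $U_{k_j}^{r - 1}$ for $u_{k_j}(s)$ and the corresponding Butcher series over $\mcT_1^{\leq r - 1, k_1}$ (which encodes conjugation via the decoration $\mfp = 1$, giving $P_{(\mft_1, 1)}(k_1) = k_1^2$) in place of $\bar u_{k_1}(s)$ in the integrand of \eqref{duhamel_Fourier}. The induction hypothesis yields an error of $\mathcal{O}(s^r)$ inside the integral, which integrated on $[0, t]$ is $\mathcal{O}(t^{r + 1})$. It then suffices to match the main substituted term with $U_k^r(t, v)$ modulo higher-order contributions.

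The two recursive lines of \eqref{def_Pi} encode exactly one Duhamel step: for $T = \mathcal{I}_{(\mft_1, 0)}(\lambda_k \mathcal{I}_{(\mft_2, 0)}(\lambda_k T_1 \cdot T_2 \cdot \tilde T_3))$ one has $\Pi(T)(t) = -i\, e^{-itk^2} \int_0^t e^{isk^2}\, \Pi(T_1)(s)\, \Pi(T_2)(s)\, \Pi(\tilde T_3)(s)\, ds$ (with $\alpha = 0$ trivialising the $|\nabla|^\alpha(k)$ factor). Each ordered triple $(T_1, T_2, \tilde T_3)$ with $T_1, T_2 \in \mcT_0^{\leq r - 1, k_j}$ and $\tilde T_3 \in \mcT_1^{\leq r - 1, k_1}$, subject to $k = -k_1 + k_2 + k_3$, thus assembles by grafting into such a $T \in \mcT_0^{\leq r, k}$. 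Triples whose total blue-edge count exceeds $r$ (which can occur since each subtree may carry up to $r - 1$ blue edges) contribute only $\mathcal{O}(t^{r + 1})$, since each blue edge corresponds to one time integration.

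The genuine technical point is the matching of combinatorial weights. The ordered-triple sum produces $\Upsilon(T_1)\Upsilon(T_2)\Upsilon(\tilde T_3)/(S(T_1) S(T_2) S(\tilde T_3))$ per triple. Since $\tilde T_3$ (of $\bar v$-type) is distinguishable from $T_1, T_2$ (both of $v$-type), the only ordering ambiguity is the swap $T_1 \leftrightarrow T_2$; a short case analysis (whether $T_1 = T_2$ or not) shows that the ordered sum collapses to the unordered one with weight $2/S(T)$ in both cases, because the repetition factor $\gamma!$ in $S(T) = \gamma!\, S(T_1) S(T_2) S(\tilde T_3)$ exactly compensates. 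On the other hand, \eqref{def_Upsilon} gives $\Upsilon(T) = \partial_v^2 \partial_{\bar v}(v^2 \bar v) \cdot \Upsilon(T_1)\Upsilon(T_2)\Upsilon(\tilde T_3) = 2\, \Upsilon(T_1)\Upsilon(T_2)\Upsilon(\tilde T_3)$, so the factor of $2$ from the derivative cancels the ordering factor and delivers exactly $\Upsilon(T)(v)/S(T)$. The main obstacle is therefore not conceptual but this bookkeeping: ensuring that the derivative of $p(v, \bar v) = v^2 \bar v$, the ordering of identical $v$-branches, and the symmetry factor conspire to reproduce the weight $1/S(T)$ for every $T \in \mcT_0^{\leq r, k}$.
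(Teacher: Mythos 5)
The paper does not actually prove this statement --- it is recalled from \cite[Prop.~4.3]{BS} --- but your induction on $r$ (one Duhamel iteration per step, trees carrying more than $r$ blue edges absorbed into $\mathcal{O}(t^{r+1})$ since each $\mft_2$-edge contributes one time integration, and the matching of the ordered substitution against the weight $\Upsilon(T)/S(T)$) is exactly the standard argument used there, and your identification of the factor $2=\partial_v^2\partial_{\bar v}(v^2\bar v)$ as the quantity that cancels against the ordering multiplicity and the repetition factor $\gamma!$ is the right mechanism. The one point to phrase carefully is that $S$ is computed on $T_\mfe$ with the frequency decorations forgotten, so ``$T_1=T_2$'' in your case analysis must distinguish equality of shapes (which governs the $\gamma!$ in $S(T)$) from equality of fully decorated trees (which governs the multiplicity in the ordered sum); the bookkeeping closes once one adopts the convention, implicit in the paper's first-order example, that $\sum_{T\in\CT_0^{\leq r,k}}$ ranges over the frequency tuples $k_i$ rather than over isomorphism classes of decorated trees.
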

As an example, we can list the elements for $ \CT_0^{\leq 1,k} $ and $ \CT_0^{\leq 2,k} $ below:
\begin{equs}
	\CT^{\leq 1,k}_{0} & = \left\lbrace T_0, T_1, \, k_i \in \mathbb{Z}^d \right\rbrace, \quad T_0 =  \begin{tikzpicture}[scale=0.2,baseline=-5]
		\coordinate (root) at (0,1);
		\coordinate (tri) at (0,-1);
		\draw[kernels2] (tri) -- (root);
		\node[var] (rootnode) at (root) {\tiny{$ k $}};
		\node[not] (trinode) at (tri) {};
	\end{tikzpicture} , \quad T_1 =  \begin{tikzpicture}[scale=0.2,baseline=-5]
		\coordinate (root) at (0,2);
		\coordinate (tri) at (0,0);
		\coordinate (trib) at (0,-2);
		\coordinate (t1) at (-2,4);
		\coordinate (t2) at (2,4);
		\coordinate (t3) at (0,5);
		\draw[kernels2,tinydots] (t1) -- (root);
		\draw[kernels2] (t2) -- (root);
		\draw[kernels2] (t3) -- (root);
		\draw[kernels2] (trib) -- (tri);
		\draw[symbols] (root) -- (tri);
		\node[not] (rootnode) at (root) {};
		\node[not] (trinode) at (tri) {};
		\node[var] (rootnode) at (t1) {\tiny{$ k_{\tiny{1}} $}};
		\node[var] (rootnode) at (t3) {\tiny{$ k_{\tiny{2}} $}};
		\node[var] (trinode) at (t2) {\tiny{$ k_{\tiny{3}} $}};
		\node[not] (trinode) at (trib) {};
	\end{tikzpicture} 
\\
	\CT^{\leq 2,k}_{0} & = \left\lbrace T_0,T_1,T_2,T_3, \, k_i \in \mathbb{Z}^d \right\rbrace, \quad T_2 = \begin{tikzpicture}[scale=0.2,baseline=-5]
		\coordinate (root) at (0,2);
		\coordinate (tri) at (0,0);
		\coordinate (trib) at (0,-2);
		\coordinate (t1) at (-2,4);
		\coordinate (t2) at (2,4);
		\coordinate (t3) at (0,4);
		\coordinate (t4) at (0,6);
		\coordinate (t41) at (-2,8);
		\coordinate (t42) at (2,8);
		\coordinate (t43) at (0,10);
		\draw[kernels2,tinydots] (t1) -- (root);
		\draw[kernels2] (t2) -- (root);
		\draw[kernels2] (t3) -- (root);
		\draw[symbols] (root) -- (tri);
		\draw[symbols] (t3) -- (t4);
		\draw[kernels2,tinydots] (t4) -- (t41);
		\draw[kernels2] (t4) -- (t42);
		\draw[kernels2] (t4) -- (t43);
		\draw[kernels2] (trib) -- (tri);
		\node[not] (trinode) at (trib) {};
		\node[not] (rootnode) at (root) {};
		\node[not] (rootnode) at (t4) {};
		\node[not] (rootnode) at (t3) {};
		\node[not] (trinode) at (tri) {};
		\node[var] (rootnode) at (t1) {\tiny{$ k_{\tiny{4}} $}};
		\node[var] (rootnode) at (t41) {\tiny{$ k_{\tiny{1}} $}};
		\node[var] (rootnode) at (t42) {\tiny{$ k_{\tiny{3}} $}};
		\node[var] (rootnode) at (t43) {\tiny{$ k_{\tiny{2}} $}};
		\node[var] (trinode) at (t2) {\tiny{$ k_5 $}};
	\end{tikzpicture}, \quad T_3 = \begin{tikzpicture}[scale=0.2,baseline=-5]
		\coordinate (root) at (0,2);
		\coordinate (tri) at (0,0);
		\coordinate (trib) at (0,-2);
		\coordinate (t1) at (-2,4);
		\coordinate (t2) at (2,4);
		\coordinate (t3) at (0,4);
		\coordinate (t4) at (0,6);
		\coordinate (t41) at (-2,8);
		\coordinate (t42) at (2,8);
		\coordinate (t43) at (0,10);
		\draw[kernels2] (t1) -- (root);
		\draw[kernels2] (t2) -- (root);
		\draw[kernels2,tinydots] (t3) -- (root);
		\draw[symbols] (root) -- (tri);
		\draw[symbols,tinydots] (t3) -- (t4);
		\draw[kernels2] (t4) -- (t41);
		\draw[kernels2,tinydots] (t4) -- (t42);
		\draw[kernels2,tinydots] (t4) -- (t43);
		\draw[kernels2] (trib) -- (tri);
		\node[not] (trinode) at (trib) {};
		\node[not] (rootnode) at (root) {};
		\node[not] (rootnode) at (t4) {};
		\node[not] (rootnode) at (t3) {};
		\node[not] (trinode) at (tri) {};
		\node[var] (rootnode) at (t1) {\tiny{$ k_{\tiny{4}} $}};
		\node[var] (rootnode) at (t41) {\tiny{$ k_{\tiny{1}} $}};
		\node[var] (rootnode) at (t42) {\tiny{$ k_{\tiny{3}} $}};
		\node[var] (rootnode) at (t43) {\tiny{$ k_{\tiny{2}} $}};
		\node[var] (trinode) at (t2) {\tiny{$ k_5 $}};
	\end{tikzpicture}.
\end{equs}

Then, one can compute the various components of the Butcher-type series given in \eqref{genscheme}. For example, the symmetry factor $S$ is given by:
\begin{equs}
	S(T_0) = 1, \quad S(T_1) = S(T_2) = 2, \quad S(T_3) = 4.
	\end{equs}
Let us stress that the symmetry factor does not take into account the frequency decorations $k_i$ but only the edge decorations. For the elementary differentials, one has
\begin{equs}
	\Upsilon[T_0](v) = v_k, \quad \Upsilon[T_1](v) = 2\bar{v}_{k_1} v_{k_2} v_{k_3}, \quad \Upsilon[T_3](v) = 4 \bar{v}_{k_1} v_{k_2} v_{k_3} \bar{v}_{k_4} v_{k_5}  ,
\end{equs}
where the factors $2$ and $4$ come from the derivation of the monomial $ u^2 $. For the iterated integrals, one has recursively: 
\begin{equs}
	(\Pi  \begin{tikzpicture}[scale=0.2,baseline=-5]
		\coordinate (root) at (0,1);
		\coordinate (tri) at (0,-1);
		\draw[kernels2] (tri) -- (root);
		\node[var] (rootnode) at (root) {\tiny{$ k_2 $}};
		\node[not] (trinode) at (tri) {};
	\end{tikzpicture}) (t) & = e^{-i t k_2^2}, \quad (\Pi  \begin{tikzpicture}[scale=0.2,baseline=-5]
		\coordinate (root) at (0,1);
		\coordinate (tri) at (0,-1);
		\draw[kernels2,tinydots] (tri) -- (root);
		\node[var] (rootnode) at (root) {\tiny{$ k_1 $}};
		\node[not] (trinode) at (tri) {};
	\end{tikzpicture}) (t) = e^{i t k_1^2}, \quad 
	(\Pi \begin{tikzpicture}[scale=0.2,baseline=-5]
		\coordinate (root) at (0,-1);
		\coordinate (t1) at (-2,1);
		\coordinate (t2) at (2,1);
		\coordinate (t3) at (0,2);
		\draw[kernels2,tinydots] (t1) -- (root);
		\draw[kernels2] (t2) -- (root);
		\draw[kernels2] (t3) -- (root);
		\node[not] (rootnode) at (root) {};t
		\node[var] (rootnode) at (t1) {\tiny{$ k_{\tiny{1}} $}};
		\node[var] (rootnode) at (t3) {\tiny{$ k_{\tiny{2}} $}};
		\node[var] (trinode) at (t2) {\tiny{$ k_3 $}};
	\end{tikzpicture}  )(t) = e^{i t (k_1^2 - k_2^2 - k_3^2)},
	\\ ( \Pi \begin{tikzpicture}[scale=0.2,baseline=-5]
		\coordinate (root) at (0,0);
		\coordinate (tri) at (0,-2);
		\coordinate (t1) at (-2,2);
		\coordinate (t2) at (2,2);
		\coordinate (t3) at (0,3);
		\draw[kernels2,tinydots] (t1) -- (root);
		\draw[kernels2] (t2) -- (root);
		\draw[kernels2] (t3) -- (root);
		\draw[symbols] (root) -- (tri);
		\node[not] (rootnode) at (root) {};t
		\node[not] (trinode) at (tri) {};
		\node[var] (rootnode) at (t1) {\tiny{$ k_{\tiny{1}} $}};
		\node[var] (rootnode) at (t3) {\tiny{$ k_{\tiny{2}} $}};
		\node[var] (trinode) at (t2) {\tiny{$ k_3 $}};
	\end{tikzpicture}) (t) & = -i \int^{t}_0 e^{is (-k_1 + k_2 + k_3)^2} e^{i s (k_1^2 - k_2^2 - k_3^2)} ds.
\end{equs}
with these computations, one has
\begin{equs}
	(\Pi T_0)(t) & = e^{-it k^2}, \\	(\Pi T_1)(t) & = e^{-it k^2} (\Pi \begin{tikzpicture}[scale=0.2,baseline=-5]
		\coordinate (root) at (0,0);
		\coordinate (tri) at (0,-2);
		\coordinate (t1) at (-2,2);
		\coordinate (t2) at (2,2);
		\coordinate (t3) at (0,3);
		\draw[kernels2,tinydots] (t1) -- (root);
		\draw[kernels2] (t2) -- (root);
		\draw[kernels2] (t3) -- (root);
		\draw[symbols] (root) -- (tri);
		\node[not] (rootnode) at (root) {};t
		\node[not,label= {[label distance=-0.2em]below: \scriptsize  $  $}] (trinode) at (tri) {};
		\node[var] (rootnode) at (t1) {\tiny{$ k_{\tiny{1}} $}};
		\node[var] (rootnode) at (t3) {\tiny{$ k_{\tiny{2}} $}};
		\node[var] (trinode) at (t2) {\tiny{$ k_3 $}};
	\end{tikzpicture})(t)  =  -i e^{-it k^2} \int^{\tau}_0 e^{is (-k_1 + k_2 + k_3)^2} e^{i s (k_1^2 - k_2^2 - k_3^2)} ds.
\end{equs}
In the sequel, we will need extra sets of decorated trees that we denote by $ 	\hat{\mcT}^{\leq r,k}_{\text{\tiny{res}},j} $ and $ \hat{\mcT}^{m,k}_{\text{\tiny{res}},j} $. These sets are very similar to the sets $ 	\hat{\mcT}^{\leq r,k}_j $ and $ \hat{\mcT}^{m,k}_j $ except that exactly one  inner node connected to  three leaves does not satisfy the condition \eqref{reso_condition}. For example, one has
\begin{equs}
	\begin{tikzpicture}[scale=0.2,baseline=-5]
		\coordinate (root) at (0,0);
		\coordinate (tri) at (0,-2);
		\coordinate (t1) at (-2,2);
		\coordinate (t2) at (2,2);
		\coordinate (t3) at (0,3);
		\draw[kernels2,tinydots] (t1) -- (root);
		\draw[kernels2] (t2) -- (root);
		\draw[kernels2] (t3) -- (root);
		\draw[symbols] (root) -- (tri);
		\node[not] (rootnode) at (root) {};t
		\node[not,label= {[label distance=-0.2em]below: \scriptsize  $  $}] (trinode) at (tri) {};
		\node[var] (rootnode) at (t1) {\tiny{$ k_{\tiny{1}} $}};
		\node[var] (rootnode) at (t3) {\tiny{$ k_{\tiny{2}} $}};
		\node[var] (trinode) at (t2) {\tiny{$ k_3 $}};
	\end{tikzpicture} \in \hat{\mcT}^{1,k}_0, \quad \begin{tikzpicture}[scale=0.2,baseline=-5]
	\coordinate (root) at (0,0);
	\coordinate (tri) at (0,-2);
	\coordinate (t1) at (-2,2);
	\coordinate (t2) at (2,2);
	\coordinate (t3) at (0,3);
	\draw[kernels2,tinydots] (t1) -- (root);
	\draw[kernels2] (t2) -- (root);
	\draw[kernels2] (t3) -- (root);
	\draw[symbols] (root) -- (tri);
	\node[not] (rootnode) at (root) {};t
	\node[not,label= {[label distance=-0.2em]below: \scriptsize  $  $}] (trinode) at (tri) {};
	\node[var] (rootnode) at (t1) {\tiny{$ k_1 $}};
	\node[var] (rootnode) at (t3) {\tiny{$ k_1 $}};
	\node[var] (trinode) at (t2) {\tiny{$ k $}};
\end{tikzpicture} \in \hat{\mcT}^{1,k}_{\text{\tiny{res}},0}
\end{equs}
with $k=-k_1 + k_2 + k_3$ and $ k _1\neq k_2, k_3$ for the first tree. One has also
\begin{equs}
	\begin{tikzpicture}[scale=0.2,baseline=-5]
		\coordinate (root) at (0,0);
		\coordinate (tri) at (0,-2);
		\coordinate (t1) at (-2,2);
		\coordinate (t2) at (2,2);
		\coordinate (t3) at (0,3);
		\coordinate (t4) at (4,4);
		\coordinate (t41) at (2,6);
		\coordinate (t42) at (6,6);
		\coordinate (t43) at (4,8);
		\coordinate (t4l) at (-4,4);
		\coordinate (t41l) at (-2,6);
		\coordinate (t42l) at (-6,6);
		\coordinate (t43l) at (-4,8);
		\draw[kernels2,tinydots] (t1) -- (root);
		\draw[kernels2] (t2) -- (root);
		\draw[kernels2] (t3) -- (root);
		\draw[symbols] (root) -- (tri);
		\draw[symbols] (t2) -- (t4);
		\draw[kernels2,tinydots] (t4) -- (t41);
		\draw[kernels2] (t4) -- (t42);
		\draw[kernels2] (t4) -- (t43);\draw[symbols,tinydots] (t1) -- (t4l);
		\draw[kernels2,tinydots] (t4l) -- (t41l);
		\draw[kernels2] (t4l) -- (t42l);
		\draw[kernels2,tinydots] (t4l) -- (t43l);
		
		\node[not] (rootnode) at (root) {};
		\node[not] (rootnode) at (t4) {};
		\node[var] (rootnode) at (t3) {\tiny{$ k_{\tiny{4}} $}};
		\node[not,label= {[label distance=-0.2em]below: \scriptsize  $  $}] (trinode) at (tri) {};
		\node[not] (rootnode) at (t1) {};
		\node[var] (rootnode) at (t41) {\tiny{$ k_{\tiny{5}} $}};
		\node[var] (rootnode) at (t42) {\tiny{$ k_{\tiny{7}} $}};
		\node[var] (rootnode) at (t43) {\tiny{$ k_{\tiny{6}} $}};
		\node[var] (rootnode) at (t41l) {\tiny{$ k_{\tiny{3}} $}};
		\node[var] (rootnode) at (t42l) {\tiny{$ k_{\tiny{1}} $}};
		\node[var] (rootnode) at (t43l) {\tiny{$ k_{\tiny{2}} $}};
		\node[not] (trinode) at (t2) {};
	\end{tikzpicture} \in \hat{\mcT}^{1,k}_0, \quad 	\begin{tikzpicture}[scale=0.2,baseline=-5]
	\coordinate (root) at (0,0);
	\coordinate (tri) at (0,-2);
	\coordinate (t1) at (-2,2);
	\coordinate (t2) at (2,2);
	\coordinate (t3) at (0,3);
	\coordinate (t4) at (4,4);
	\coordinate (t41) at (2,6);
	\coordinate (t42) at (6,6);
	\coordinate (t43) at (4,8);
	\coordinate (t4l) at (-4,4);
	\coordinate (t41l) at (-2,6);
	\coordinate (t42l) at (-6,6);
	\coordinate (t43l) at (-4,8);
	\draw[kernels2,tinydots] (t1) -- (root);
	\draw[kernels2] (t2) -- (root);
	\draw[kernels2] (t3) -- (root);
	\draw[symbols] (root) -- (tri);
	\draw[symbols] (t2) -- (t4);
	\draw[kernels2,tinydots] (t4) -- (t41);
	\draw[kernels2] (t4) -- (t42);
	\draw[kernels2] (t4) -- (t43);\draw[symbols,tinydots] (t1) -- (t4l);
	\draw[kernels2,tinydots] (t4l) -- (t41l);
	\draw[kernels2] (t4l) -- (t42l);
	\draw[kernels2,tinydots] (t4l) -- (t43l);
	
	\node[not] (rootnode) at (root) {};
	\node[not] (rootnode) at (t4) {};
	\node[var] (rootnode) at (t3) {\tiny{$ k_{\tiny{4}} $}};
	\node[not,label= {[label distance=-0.2em]below: \scriptsize  $  $}] (trinode) at (tri) {};
	\node[not] (rootnode) at (t1) {};
	\node[var] (rootnode) at (t41) {\tiny{$ k_{\tiny{5}} $}};
	\node[var] (rootnode) at (t42) {\tiny{$ k_{\tiny{7}} $}};
	\node[var] (rootnode) at (t43) {\tiny{$ k_{\tiny{6}} $}};
	\node[var] (rootnode) at (t41l) {\tiny{$ \ell_{\tiny{1}} $}};
	\node[var] (rootnode) at (t42l) {\tiny{$ k_{\tiny{1}} $}};
	\node[var] (rootnode) at (t43l) {\tiny{$ k_{\tiny{1}} $}};
	\node[not] (trinode) at (t2) {};
\end{tikzpicture}, \, 	\begin{tikzpicture}[scale=0.2,baseline=-5]
\coordinate (root) at (0,0);
\coordinate (tri) at (0,-2);
\coordinate (t1) at (-2,2);
\coordinate (t2) at (2,2);
\coordinate (t3) at (0,3);
\coordinate (t4) at (4,4);
\coordinate (t41) at (2,6);
\coordinate (t42) at (6,6);
\coordinate (t43) at (4,8);
\coordinate (t4l) at (-4,4);
\coordinate (t41l) at (-2,6);
\coordinate (t42l) at (-6,6);
\coordinate (t43l) at (-4,8);
\draw[kernels2,tinydots] (t1) -- (root);
\draw[kernels2] (t2) -- (root);
\draw[kernels2] (t3) -- (root);
\draw[symbols] (root) -- (tri);
\draw[symbols] (t2) -- (t4);
\draw[kernels2,tinydots] (t4) -- (t41);
\draw[kernels2] (t4) -- (t42);
\draw[kernels2] (t4) -- (t43);\draw[symbols,tinydots] (t1) -- (t4l);
\draw[kernels2,tinydots] (t4l) -- (t41l);
\draw[kernels2] (t4l) -- (t42l);
\draw[kernels2,tinydots] (t4l) -- (t43l);

\node[not] (rootnode) at (root) {};
\node[not] (rootnode) at (t4) {};
\node[var] (rootnode) at (t3) {\tiny{$ k_{\tiny{4}} $}};
\node[not,label= {[label distance=-0.2em]below: \scriptsize  $  $}] (trinode) at (tri) {};
\node[not] (rootnode) at (t1) {};
\node[var] (rootnode) at (t41) {\tiny{$ k_{\tiny{5}} $}};
\node[var] (rootnode) at (t42) {\tiny{$ \ell_{\tiny{5}} $}};
\node[var] (rootnode) at (t43) {\tiny{$ k_{\tiny{5}} $}};
\node[var] (rootnode) at (t41l) {\tiny{$ k_{\tiny{3}} $}};
\node[var] (rootnode) at (t42l) {\tiny{$ k_{\tiny{1}} $}};
\node[var] (rootnode) at (t43l) {\tiny{$ k_{\tiny{2}} $}};
\node[not] (trinode) at (t2) {};
\end{tikzpicture} \in \hat{\mcT}^{1,k}_{\text{\tiny{res}},0}
	\end{equs}
where $ -\ell_1  = k_1 -k_2 -k_3  $, $ \ell_5  = -k_5 + k_6 + k_7 $, $ k_1\neq k_2, k_3$,  $ k_5\neq k_6, k_7$ and $ \ell_1 \neq k_4, \ell_5$ for the first tree.

The Butcher-Connes-Kreimer coproduct, we will work with is slightly different from the previous section as we cut only certain types of  edges and the edges will be kept in this operation. We first consider the space $\CH_2$ as a subspace of $\CH$ containing forests with planted trees of the form $ \CI_{o_2}(\lambda_k F) $. The Butcher-Connes-Kreimer type coproduct $ \Delta_{\text{\tiny{BCK}}} : \CH_2 \rightarrow \CH_2 \otimes \CH_2 $, a simple version of the one introduced in \cite{BS}, is defined recursively by
\begin{equs} \label{BCK_new}
	\begin{aligned}
	\Delta_{\text{\tiny{BCK}}} \CI_{o_1}( \lambda_{k}  F ) & = \left( \id \, \otimes \CI_{o_1}( \lambda_{k}  \cdot )  \right) \Delta_{\text{\tiny{BCK}}} F,  \\ 
	\Delta_{\text{\tiny{BCK}}} \CI_{o_2}( \lambda_{k}  F ) & = \left( \id \, \otimes \CI_{o_2}( \lambda_{k}  \cdot ) \right) \Delta_{\text{\tiny{BCK}}} F +  \CI_{o_2}( \lambda_{k}  F ) \otimes \one.
	\end{aligned}
\end{equs}
and then extended multiplicatively for the forest product. One has also a similar formula with the admissible cuts where one cuts only edges decorated by $o_2$.
Below, we provide some examples of computations:
\begin{equs}
	\\ & \Delta_{\text{\tiny{BCK}}} \begin{tikzpicture}[scale=0.2,baseline=-5]
		\coordinate (root) at (0,0);
		\coordinate (tri) at (0,-2);
		\coordinate (t1) at (-2,2);
		\coordinate (t2) at (2,2);
		\coordinate (t3) at (0,3);
		\draw[kernels2,tinydots] (t1) -- (root);
		\draw[kernels2] (t2) -- (root);
		\draw[kernels2] (t3) -- (root);
		\draw[symbols] (root) -- (tri);
		\node[not] (rootnode) at (root) {};t
		\node[not,label= {[label distance=-0.2em]below: \scriptsize  $ $}] (trinode) at (tri) {};
		\node[var] (rootnode) at (t1) {\tiny{$ k_{\tiny{1}} $}};
		\node[var] (rootnode) at (t3) {\tiny{$ k_{\tiny{2}} $}};
		\node[var] (trinode) at (t2) {\tiny{$ k_3 $}};
	\end{tikzpicture}  =   \begin{tikzpicture}[scale=0.2,baseline=-5]
		\coordinate (root) at (0,0);
		\coordinate (tri) at (0,-2);
		\coordinate (t1) at (-2,2);
		\coordinate (t2) at (2,2);
		\coordinate (t3) at (0,3);
		\draw[kernels2,tinydots] (t1) -- (root);
		\draw[kernels2] (t2) -- (root);
		\draw[kernels2] (t3) -- (root);
		\draw[symbols] (root) -- (tri);
		\node[not] (rootnode) at (root) {};t
		\node[not,label= {[label distance=-0.2em]below: \scriptsize  $ $}] (trinode) at (tri) {};
		\node[var] (rootnode) at (t1) {\tiny{$ k_{\tiny{1}} $}};
		\node[var] (rootnode) at (t3) {\tiny{$ k_{\tiny{2}} $}};
		\node[var] (trinode) at (t2) {\tiny{$ k_3 $}};
	\end{tikzpicture} \otimes \one + \one \otimes  \begin{tikzpicture}[scale=0.2,baseline=-5]
		\coordinate (root) at (0,0);
		\coordinate (tri) at (0,-2);
		\coordinate (t1) at (-2,2);
		\coordinate (t2) at (2,2);
		\coordinate (t3) at (0,3);
		\draw[kernels2,tinydots] (t1) -- (root);
		\draw[kernels2] (t2) -- (root);
		\draw[kernels2] (t3) -- (root);
		\draw[symbols] (root) -- (tri);
		\node[not] (rootnode) at (root) {};t
		\node[not,label= {[label distance=-0.2em]below: \scriptsize  $ $}] (trinode) at (tri) {};
		\node[var] (rootnode) at (t1) {\tiny{$ k_{\tiny{1}} $}};
		\node[var] (rootnode) at (t3) {\tiny{$ k_{\tiny{2}} $}};
		\node[var] (trinode) at (t2) {\tiny{$ k_3 $}};
	\end{tikzpicture} 
\\
	& \Delta_{\text{\tiny{BCK}}}  \begin{tikzpicture}[scale=0.2,baseline=-5]
		\coordinate (root) at (0,0);
		\coordinate (tri) at (0,-2);
		\coordinate (t1) at (-2,2);
		\coordinate (t2) at (2,2);
		\coordinate (t3) at (0,2);
		\coordinate (t4) at (0,4);
		\coordinate (t41) at (-2,6);
		\coordinate (t42) at (2,6);
		\coordinate (t43) at (0,8);
		\draw[kernels2,tinydots] (t1) -- (root);
		\draw[kernels2] (t2) -- (root);
		\draw[kernels2] (t3) -- (root);
		\draw[symbols] (root) -- (tri);
		\draw[symbols] (t3) -- (t4);
		\draw[kernels2,tinydots] (t4) -- (t41);
		\draw[kernels2] (t4) -- (t42);
		\draw[kernels2] (t4) -- (t43);
		\node[not] (rootnode) at (root) {};
		\node[not] (rootnode) at (t4) {};
		\node[not] (rootnode) at (t3) {};
		\node[not,label= {[label distance=-0.2em]below: \scriptsize  $  $}] (trinode) at (tri) {};
		\node[var] (rootnode) at (t1) {\tiny{$ k_{\tiny{4}} $}};
		\node[var] (rootnode) at (t41) {\tiny{$ k_{\tiny{1}} $}};
		\node[var] (rootnode) at (t42) {\tiny{$ k_{\tiny{3}} $}};
		\node[var] (rootnode) at (t43) {\tiny{$ k_{\tiny{2}} $}};
		\node[var] (trinode) at (t2) {\tiny{$ k_5 $}};
	\end{tikzpicture}  =
	\begin{tikzpicture}[scale=0.2,baseline=-5]
		\coordinate (root) at (0,0);
		\coordinate (tri) at (0,-2) ;
		\coordinate (t1) at (-2,2);
		\coordinate (t2) at (2,2);
		\coordinate (t3) at (0,2);
		\coordinate (t4) at (0,4);
		\coordinate (t41) at (-2,6);
		\coordinate (t42) at (2,6);
		\coordinate (t43) at (0,8);
		\draw[kernels2,tinydots] (t1) -- (root);
		\draw[kernels2] (t2) -- (root);
		\draw[kernels2] (t3) -- (root);
		\draw[symbols] (root) -- (tri);
		\draw[symbols] (t3) -- (t4);
		\draw[kernels2,tinydots] (t4) -- (t41);
		\draw[kernels2] (t4) -- (t42);
		\draw[kernels2] (t4) -- (t43);
		\node[not] (rootnode) at (root) {};
		\node[not] (rootnode) at (t4) {};
		\node[not] (rootnode) at (t3) {};
		\node[not,label= {[label distance=-0.2em]below: \scriptsize  $  $} ] (trinode) at (tri) {};
		\node[var] (rootnode) at (t1) {\tiny{$ k_{\tiny{4}} $}};
		\node[var] (rootnode) at (t41) {\tiny{$ k_{\tiny{1}} $}};
		\node[var] (rootnode) at (t42) {\tiny{$ k_{\tiny{3}} $}};
		\node[var] (rootnode) at (t43) {\tiny{$ k_{\tiny{2}} $}};
		\node[var] (trinode) at (t2) {\tiny{$ k_5 $}};
	\end{tikzpicture} \otimes \one 
	+ \one \otimes \begin{tikzpicture}[scale=0.2,baseline=-5]
		\coordinate (root) at (0,0);
		\coordinate (tri) at (0,-2);
		\coordinate (t1) at (-2,2);
		\coordinate (t2) at (2,2);
		\coordinate (t3) at (0,2);
		\coordinate (t4) at (0,4);
		\coordinate (t41) at (-2,6);
		\coordinate (t42) at (2,6);
		\coordinate (t43) at (0,8);
		\draw[kernels2,tinydots] (t1) -- (root);
		\draw[kernels2] (t2) -- (root);
		\draw[kernels2] (t3) -- (root);
		\draw[symbols] (root) -- (tri);
		\draw[symbols] (t3) -- (t4);
		\draw[kernels2,tinydots] (t4) -- (t41);
		\draw[kernels2] (t4) -- (t42);
		\draw[kernels2] (t4) -- (t43);
		\node[not] (rootnode) at (root) {};
		\node[not] (rootnode) at (t4) {};
		\node[not] (rootnode) at (t3) {};
		\node[not,label= {[label distance=-0.2em]below: \scriptsize  $  $}] (trinode) at (tri) {};
		\node[var] (rootnode) at (t1) {\tiny{$ k_{\tiny{4}} $}};
		\node[var] (rootnode) at (t41) {\tiny{$ k_{\tiny{1}} $}};
		\node[var] (rootnode) at (t42) {\tiny{$ k_{\tiny{3}} $}};
		\node[var] (rootnode) at (t43) {\tiny{$ k_{\tiny{2}} $}};
		\node[var] (trinode) at (t2) {\tiny{$ k_5 $}};
	\end{tikzpicture}  + \begin{tikzpicture}[scale=0.2,baseline=-5]
	\coordinate (root) at (0,0);
	\coordinate (tri) at (0,-2);
	\coordinate (t1) at (-2,2);
	\coordinate (t2) at (2,2);
	\coordinate (t3) at (0,3);
	\draw[kernels2,tinydots] (t1) -- (root);
	\draw[kernels2] (t2) -- (root);
	\draw[kernels2] (t3) -- (root);
	\draw[symbols] (root) -- (tri);
	\node[not] (rootnode) at (root) {};t
	\node[not,label= {[label distance=-0.2em]below: \scriptsize  $ $}] (trinode) at (tri) {};
	\node[var] (rootnode) at (t1) {\tiny{$ k_{\tiny{1}} $}};
	\node[var] (rootnode) at (t3) {\tiny{$ k_{\tiny{2}} $}};
	\node[var] (trinode) at (t2) {\tiny{$ k_3 $}};
\end{tikzpicture} \otimes
	\begin{tikzpicture}[scale=0.2,baseline=-5]
		\coordinate (root) at (0,0);
		\coordinate (tri) at (0,-2);
		\coordinate (t1) at (-2,2);
		\coordinate (t2) at (2,2);
		\coordinate (t3) at (0,3);
		\draw[kernels2,tinydots] (t1) -- (root);
		\draw[kernels2] (t2) -- (root);
		\draw[kernels2] (t3) -- (root);
		\draw[symbols] (root) -- (tri);
		\node[not] (rootnode) at (root) {};t
		\node[not,label= {[label distance=-0.2em]below: \scriptsize  $  $}] (trinode) at (tri) {};
		\node[var] (rootnode) at (t1) {\tiny{$ k_{\tiny{4}} $}};
		\node[var] (rootnode) at (t3) {\tiny{$ \ell_1 $}};
		\node[var] (trinode) at (t2) {\tiny{$ k_5 $}};
	\end{tikzpicture}  
\end{equs}
where $ \ell_1 = -k_1 + k_2 + k_3 $. For the first line, one can see that the decorated tree is primitive because the coproduct is of the form $ \tau \otimes \one + \one \otimes \tau $. The first term corresponds to the cut of the blue edge connected to the root and the second term is the empty cut. The second line of computations is about a decorated tree with two blue edges as these edges belong to the same path to the root, they cannot be cut simultaneously and therefore one gets only trees on both sides of the tensor products. Below, we provide an example where two edges can be cut at the same time producing a forest 
\begin{equs}
\,	 & \Delta_{\text{\tiny{BCK}}}  \begin{tikzpicture}[scale=0.2,baseline=-5]
	\coordinate (root) at (0,0);
	\coordinate (tri) at (0,-2);
	\coordinate (t1) at (-2,2);
	\coordinate (t2) at (2,2);
	\coordinate (t3) at (0,3);
	\coordinate (t4) at (4,4);
	\coordinate (t41) at (2,6);
	\coordinate (t42) at (6,6);
	\coordinate (t43) at (4,8);
	\coordinate (t4l) at (-4,4);
	\coordinate (t41l) at (-2,6);
	\coordinate (t42l) at (-6,6);
	\coordinate (t43l) at (-4,8);
	\draw[kernels2,tinydots] (t1) -- (root);
	\draw[kernels2] (t2) -- (root);
	\draw[kernels2] (t3) -- (root);
	\draw[symbols] (root) -- (tri);
	\draw[symbols] (t2) -- (t4);
	\draw[kernels2,tinydots] (t4) -- (t41);
	\draw[kernels2] (t4) -- (t42);
	\draw[kernels2] (t4) -- (t43);\draw[symbols,tinydots] (t1) -- (t4l);
	\draw[kernels2,tinydots] (t4l) -- (t41l);
	\draw[kernels2] (t4l) -- (t42l);
	\draw[kernels2,tinydots] (t4l) -- (t43l);
	
	\node[not] (rootnode) at (root) {};
	\node[not] (rootnode) at (t4) {};
	\node[var] (rootnode) at (t3) {\tiny{$ k_{\tiny{4}} $}};
	\node[not,label= {[label distance=-0.2em]below: \scriptsize  $  $}] (trinode) at (tri) {};
	\node[not] (rootnode) at (t1) {};
	\node[var] (rootnode) at (t41) {\tiny{$ k_{\tiny{5}} $}};
	\node[var] (rootnode) at (t42) {\tiny{$ k_{\tiny{7}} $}};
	\node[var] (rootnode) at (t43) {\tiny{$ k_{\tiny{6}} $}};
	\node[var] (rootnode) at (t41l) {\tiny{$ k_{\tiny{3}} $}};
	\node[var] (rootnode) at (t42l) {\tiny{$ k_{\tiny{1}} $}};
	\node[var] (rootnode) at (t43l) {\tiny{$ k_{\tiny{2}} $}};
	\node[not] (trinode) at (t2) {};
\end{tikzpicture}  = \begin{tikzpicture}[scale=0.2,baseline=-5]
\coordinate (root) at (0,0);
\coordinate (tri) at (0,-2);
\coordinate (t1) at (-2,2);
\coordinate (t2) at (2,2);
\coordinate (t3) at (0,3);
\coordinate (t4) at (4,4);
\coordinate (t41) at (2,6);
\coordinate (t42) at (6,6);
\coordinate (t43) at (4,8);
\coordinate (t4l) at (-4,4);
\coordinate (t41l) at (-2,6);
\coordinate (t42l) at (-6,6);
\coordinate (t43l) at (-4,8);
\draw[kernels2,tinydots] (t1) -- (root);
\draw[kernels2] (t2) -- (root);
\draw[kernels2] (t3) -- (root);
\draw[symbols] (root) -- (tri);
\draw[symbols] (t2) -- (t4);
\draw[kernels2,tinydots] (t4) -- (t41);
\draw[kernels2] (t4) -- (t42);
\draw[kernels2] (t4) -- (t43);\draw[symbols,tinydots] (t1) -- (t4l);
\draw[kernels2,tinydots] (t4l) -- (t41l);
\draw[kernels2] (t4l) -- (t42l);
\draw[kernels2,tinydots] (t4l) -- (t43l);

\node[not] (rootnode) at (root) {};
\node[not] (rootnode) at (t4) {};
\node[var] (rootnode) at (t3) {\tiny{$ k_{\tiny{4}} $}};
\node[not,label= {[label distance=-0.2em]below: \scriptsize  $  $}] (trinode) at (tri) {};
\node[not] (rootnode) at (t1) {};
\node[var] (rootnode) at (t41) {\tiny{$ k_{\tiny{5}} $}};
\node[var] (rootnode) at (t42) {\tiny{$ k_{\tiny{7}} $}};
\node[var] (rootnode) at (t43) {\tiny{$ k_{\tiny{6}} $}};
\node[var] (rootnode) at (t41l) {\tiny{$ k_{\tiny{3}} $}};
\node[var] (rootnode) at (t42l) {\tiny{$ k_{\tiny{1}} $}};
\node[var] (rootnode) at (t43l) {\tiny{$ k_{\tiny{2}} $}};
\node[not] (trinode) at (t2) {};
\end{tikzpicture} \otimes \one + \one \otimes \begin{tikzpicture}[scale=0.2,baseline=-5]
\coordinate (root) at (0,0);
\coordinate (tri) at (0,-2);
\coordinate (t1) at (-2,2);
\coordinate (t2) at (2,2);
\coordinate (t3) at (0,3);
\coordinate (t4) at (4,4);
\coordinate (t41) at (2,6);
\coordinate (t42) at (6,6);
\coordinate (t43) at (4,8);
\coordinate (t4l) at (-4,4);
\coordinate (t41l) at (-2,6);
\coordinate (t42l) at (-6,6);
\coordinate (t43l) at (-4,8);
\draw[kernels2,tinydots] (t1) -- (root);
\draw[kernels2] (t2) -- (root);
\draw[kernels2] (t3) -- (root);
\draw[symbols] (root) -- (tri);
\draw[symbols] (t2) -- (t4);
\draw[kernels2,tinydots] (t4) -- (t41);
\draw[kernels2] (t4) -- (t42);
\draw[kernels2] (t4) -- (t43);\draw[symbols,tinydots] (t1) -- (t4l);
\draw[kernels2,tinydots] (t4l) -- (t41l);
\draw[kernels2] (t4l) -- (t42l);
\draw[kernels2,tinydots] (t4l) -- (t43l);

\node[not] (rootnode) at (root) {};
\node[not] (rootnode) at (t4) {};
\node[var] (rootnode) at (t3) {\tiny{$ k_{\tiny{4}} $}};
\node[not,label= {[label distance=-0.2em]below: \scriptsize  $  $}] (trinode) at (tri) {};
\node[not] (rootnode) at (t1) {};
\node[var] (rootnode) at (t41) {\tiny{$ k_{\tiny{5}} $}};
\node[var] (rootnode) at (t42) {\tiny{$ k_{\tiny{7}} $}};
\node[var] (rootnode) at (t43) {\tiny{$ k_{\tiny{6}} $}};
\node[var] (rootnode) at (t41l) {\tiny{$ k_{\tiny{3}} $}};
\node[var] (rootnode) at (t42l) {\tiny{$ k_{\tiny{1}} $}};
\node[var] (rootnode) at (t43l) {\tiny{$ k_{\tiny{2}} $}};
\node[not] (trinode) at (t2) {};
\end{tikzpicture} 
\\& +  
\begin{tikzpicture}[scale=0.2,baseline=-5]
	\coordinate (root) at (0,0);
	\coordinate (tri) at (0,-2);
	\coordinate (t1) at (-2,2);
	\coordinate (t2) at (2,2);
	\coordinate (t3) at (0,3);
	\draw[kernels2] (t1) -- (root);
	\draw[kernels2,tinydots] (t2) -- (root);
	\draw[kernels2,tinydots] (t3) -- (root);
	\draw[symbols,tinydots] (root) -- (tri);
	\node[not] (rootnode) at (root) {};t
	\node[not,label= {[label distance=-0.2em]below: \scriptsize  $  $}] (trinode) at (tri) {};
	\node[var] (rootnode) at (t1) {\tiny{$ k_{\tiny{1}} $}};
	\node[var] (rootnode) at (t3) {\tiny{$ k_2 $}};
	\node[var] (trinode) at (t2) {\tiny{$ k_3 $}};
\end{tikzpicture}  \otimes \begin{tikzpicture}[scale=0.2,baseline=-5]
	\coordinate (root) at (0,0);
	\coordinate (tri) at (0,-2);
	\coordinate (t1) at (-2,2);
	\coordinate (t2) at (2,2);
	\coordinate (t3) at (0,2);
	\coordinate (t4) at (0,4);
	\coordinate (t41) at (-2,6);
	\coordinate (t42) at (2,6);
	\coordinate (t43) at (0,8);
	\draw[kernels2,tinydots] (t1) -- (root);
	\draw[kernels2] (t2) -- (root);
	\draw[kernels2] (t3) -- (root);
	\draw[symbols] (root) -- (tri);
	\draw[symbols] (t3) -- (t4);
	\draw[kernels2,tinydots] (t4) -- (t41);
	\draw[kernels2] (t4) -- (t42);
	\draw[kernels2] (t4) -- (t43);
	\node[not] (rootnode) at (root) {};
	\node[not] (rootnode) at (t4) {};
	\node[not] (rootnode) at (t3) {};
	\node[not,label= {[label distance=-0.2em]below: \scriptsize  $  $}] (trinode) at (tri) {};
	\node[var] (rootnode) at (t1) {\tiny{$ \ell_{\tiny{1}} $}};
	\node[var] (rootnode) at (t41) {\tiny{$ k_{\tiny{5}} $}};
	\node[var] (rootnode) at (t42) {\tiny{$ k_{\tiny{7}} $}};
	\node[var] (rootnode) at (t43) {\tiny{$ k_{\tiny{6}} $}};
	\node[var] (trinode) at (t2) {\tiny{$ k_4 $}};
\end{tikzpicture} + \begin{tikzpicture}[scale=0.2,baseline=-5]
\coordinate (root) at (0,0);
\coordinate (tri) at (0,-2);
\coordinate (t1) at (-2,2);
\coordinate (t2) at (2,2);
\coordinate (t3) at (0,3);
\draw[kernels2,tinydots] (t1) -- (root);
\draw[kernels2] (t2) -- (root);
\draw[kernels2] (t3) -- (root);
\draw[symbols] (root) -- (tri);
\node[not] (rootnode) at (root) {};t
\node[not,label= {[label distance=-0.2em]below: \scriptsize  $  $}] (trinode) at (tri) {};
\node[var] (rootnode) at (t1) {\tiny{$ k_{\tiny{5}} $}};
\node[var] (rootnode) at (t3) {\tiny{$ k_6 $}};
\node[var] (trinode) at (t2) {\tiny{$ k_7 $}};
\end{tikzpicture}  \otimes \begin{tikzpicture}[scale=0.2,baseline=-5]
\coordinate (root) at (0,0);
\coordinate (tri) at (0,-2);
\coordinate (t1) at (-2,2);
\coordinate (t2) at (2,2);
\coordinate (t3) at (0,2);
\coordinate (t4) at (0,4);
\coordinate (t41) at (-2,6);
\coordinate (t42) at (2,6);
\coordinate (t43) at (0,8);
\draw[kernels2] (t1) -- (root);
\draw[kernels2] (t2) -- (root);
\draw[kernels2,tinydots] (t3) -- (root);
\draw[symbols] (root) -- (tri);
\draw[symbols,tinydots] (t3) -- (t4);
\draw[kernels2] (t4) -- (t41);
\draw[kernels2,tinydots] (t4) -- (t42);
\draw[kernels2,tinydots] (t4) -- (t43);
\node[not] (rootnode) at (root) {};
\node[not] (rootnode) at (t4) {};
\node[not] (rootnode) at (t3) {};
\node[not,label= {[label distance=-0.2em]below: \scriptsize  $  $}] (trinode) at (tri) {};
\node[var] (rootnode) at (t1) {\tiny{$ k_{\tiny{4}} $}};
\node[var] (rootnode) at (t41) {\tiny{$ k_{\tiny{1}} $}};
\node[var] (rootnode) at (t42) {\tiny{$ k_{\tiny{3}} $}};
\node[var] (rootnode) at (t43) {\tiny{$ k_{\tiny{2}} $}};
\node[var] (trinode) at (t2) {\tiny{$ \ell_5 $}};
\end{tikzpicture} + \begin{tikzpicture}[scale=0.2,baseline=-5]
\coordinate (root) at (0,0);
\coordinate (tri) at (0,-2);
\coordinate (t1) at (-2,2);
\coordinate (t2) at (2,2);
\coordinate (t3) at (0,3);
\draw[kernels2] (t1) -- (root);
\draw[kernels2,tinydots] (t2) -- (root);
\draw[kernels2,tinydots] (t3) -- (root);
\draw[symbols,tinydots] (root) -- (tri);
\node[not] (rootnode) at (root) {};t
\node[not,label= {[label distance=-0.2em]below: \scriptsize  $  $}] (trinode) at (tri) {};
\node[var] (rootnode) at (t1) {\tiny{$ k_{\tiny{1}} $}};
\node[var] (rootnode) at (t3) {\tiny{$ k_2 $}};
\node[var] (trinode) at (t2) {\tiny{$ k_3 $}};
\end{tikzpicture}  \; \cdot \; \begin{tikzpicture}[scale=0.2,baseline=-5]
\coordinate (root) at (0,0);
\coordinate (tri) at (0,-2);
\coordinate (t1) at (-2,2);
\coordinate (t2) at (2,2);
\coordinate (t3) at (0,3);
\draw[kernels2,tinydots] (t1) -- (root);
\draw[kernels2] (t2) -- (root);
\draw[kernels2] (t3) -- (root);
\draw[symbols] (root) -- (tri);
\node[not] (rootnode) at (root) {};t
\node[not,label= {[label distance=-0.2em]below: \scriptsize  $  $}] (trinode) at (tri) {};
\node[var] (rootnode) at (t1) {\tiny{$ k_{\tiny{5}} $}};
\node[var] (rootnode) at (t3) {\tiny{$ k_6 $}};
\node[var] (trinode) at (t2) {\tiny{$ k_7 $}};
\end{tikzpicture} \otimes  \begin{tikzpicture}[scale=0.2,baseline=-5]
\coordinate (root) at (0,0);
\coordinate (tri) at (0,-2);
\coordinate (t1) at (-2,2);
\coordinate (t2) at (2,2);
\coordinate (t3) at (0,3);
\draw[kernels2,tinydots] (t1) -- (root);
\draw[kernels2] (t2) -- (root);
\draw[kernels2] (t3) -- (root);
\draw[symbols] (root) -- (tri);
\node[not] (rootnode) at (root) {};t
\node[not,label= {[label distance=-0.2em]below: \scriptsize  $ $}] (trinode) at (tri) {};
\node[var] (rootnode) at (t1) {\tiny{$ \ell_{\tiny{1}} $}};
\node[var] (rootnode) at (t3) {\tiny{$ k_{\tiny{4}} $}};
\node[var] (trinode) at (t2) {\tiny{$ \ell_5 $}};
\end{tikzpicture}    
\end{equs}
where $ -\ell_1 = k_1 -k_2 -k_3 $ and $\ell_5 = -k_5 + k_6 + k_7 $. The difference with the original Butcher-Connes-Kreiemer coproduct is the cut of the blue edge and one can observe that we kept them as they carry some analytical information. For the grafting product $ \curvearrowright $, one is allowed to graft only on the leaves and the frequency decorations must respect \eqref{frequencies_identity}. For example, one has
\begin{equs}
\begin{tikzpicture}[scale=0.2,baseline=-5]
	\coordinate (root) at (0,0);
	\coordinate (tri) at (0,-2);
	\coordinate (t1) at (-2,2);
	\coordinate (t2) at (2,2);
	\coordinate (t3) at (0,3);
	\draw[kernels2,tinydots] (t1) -- (root);
	\draw[kernels2] (t2) -- (root);
	\draw[kernels2] (t3) -- (root);
	\draw[symbols] (root) -- (tri);
	\node[not] (rootnode) at (root) {};t
	\node[not,label= {[label distance=-0.2em]below: \scriptsize  $ $}] (trinode) at (tri) {};
	\node[var] (rootnode) at (t1) {\tiny{$ k_{\tiny{1}} $}};
	\node[var] (rootnode) at (t3) {\tiny{$ k_{\tiny{2}} $}};
	\node[var] (trinode) at (t2) {\tiny{$ k_3 $}};
\end{tikzpicture}	\curvearrowright	\begin{tikzpicture}[scale=0.2,baseline=-5]
		\coordinate (root) at (0,0);
		\coordinate (tri) at (0,-2);
		\coordinate (t1) at (-2,2);
		\coordinate (t2) at (2,2);
		\coordinate (t3) at (0,3);
		\draw[kernels2,tinydots] (t1) -- (root);
		\draw[kernels2] (t2) -- (root);
		\draw[kernels2] (t3) -- (root);
		\draw[symbols] (root) -- (tri);
		\node[not] (rootnode) at (root) {};t
		\node[not,label= {[label distance=-0.2em]below: \scriptsize  $  $}] (trinode) at (tri) {};
		\node[var] (rootnode) at (t1) {\tiny{$ k_{\tiny{4}} $}};
		\node[var] (rootnode) at (t3) {\tiny{$ \ell $}};
		\node[var] (trinode) at (t2) {\tiny{$ k_5 $}};
	\end{tikzpicture}  = \begin{tikzpicture}[scale=0.2,baseline=-5]
	\coordinate (root) at (0,0);
	\coordinate (tri) at (0,-2);
	\coordinate (t1) at (-2,2);
	\coordinate (t2) at (2,2);
	\coordinate (t3) at (0,2);
	\coordinate (t4) at (0,4);
	\coordinate (t41) at (-2,6);
	\coordinate (t42) at (2,6);
	\coordinate (t43) at (0,8);
	\draw[kernels2,tinydots] (t1) -- (root);
	\draw[kernels2] (t2) -- (root);
	\draw[kernels2] (t3) -- (root);
	\draw[symbols] (root) -- (tri);
	\draw[symbols] (t3) -- (t4);
	\draw[kernels2,tinydots] (t4) -- (t41);
	\draw[kernels2] (t4) -- (t42);
	\draw[kernels2] (t4) -- (t43);
	\node[not] (rootnode) at (root) {};
	\node[not] (rootnode) at (t4) {};
	\node[not] (rootnode) at (t3) {};
	\node[not,label= {[label distance=-0.2em]below: \scriptsize  $  $}] (trinode) at (tri) {};
	\node[var] (rootnode) at (t1) {\tiny{$ k_{\tiny{4}} $}};
	\node[var] (rootnode) at (t41) {\tiny{$ k_{\tiny{1}} $}};
	\node[var] (rootnode) at (t42) {\tiny{$ k_{\tiny{3}} $}};
	\node[var] (rootnode) at (t43) {\tiny{$ k_{\tiny{2}} $}};
	\node[var] (trinode) at (t2) {\tiny{$ k_5 $}};
\end{tikzpicture}.
\end{equs}
One can notice that there were two other spots available for the grafting decorated by $k_4$ and $k_5$ but the condition \eqref{frequencies_identity} will not have been satisfied and therefore these terms are sent to zero.
The grafting product is defined only for planted decorated trees having a blue edge connected to the root.
 Now, we need to specify the alphabet $A$ for being able to define the arborification map
  $\mathfrak{a}$.
  
  We consider $A$ the alphabet whose letters are given by decorated trees of the form:
  \begin{equs}
  	\begin{tikzpicture}[scale=0.2,baseline=-5]
  		\coordinate (root) at (0,0);
  		\coordinate (tri) at (0,-2);
  		\coordinate (t1) at (-2,2);
  		\coordinate (t2) at (2,2);
  		\coordinate (t3) at (0,3);
  		\draw[kernels2,tinydots] (t1) -- (root);
  		\draw[kernels2] (t2) -- (root);
  		\draw[kernels2] (t3) -- (root);
  		\draw[symbols] (root) -- (tri);
  		\node[not] (rootnode) at (root) {};t
  		\node[not,label= {[label distance=-0.2em]below: \scriptsize  $ $}] (trinode) at (tri) {};
  		\node[var] (rootnode) at (t1) {\tiny{$ \ell_{\tiny{1}} $}};
  		\node[var] (rootnode) at (t3) {\tiny{$ \ell_{\tiny{2}} $}};
  		\node[var] (trinode) at (t2) {\tiny{$ \ell_3 $}};
  	\end{tikzpicture}, \quad \begin{tikzpicture}[scale=0.2,baseline=-5]
  	\coordinate (root) at (0,0);
  	\coordinate (tri) at (0,-2);
  	\coordinate (t1) at (-2,2);
  	\coordinate (t2) at (2,2);
  	\coordinate (t3) at (0,3);
  	\draw[kernels2] (t1) -- (root);
  	\draw[kernels2,tinydots] (t2) -- (root);
  	\draw[kernels2,tinydots] (t3) -- (root);
  	\draw[symbols,tinydots] (root) -- (tri);
  	\node[not] (rootnode) at (root) {};t
  	\node[not,label= {[label distance=-0.2em]below: \scriptsize  $ $}] (trinode) at (tri) {};
  	\node[var] (rootnode) at (t1) {\tiny{$ \ell_{\tiny{1}} $}};
  	\node[var] (rootnode) at (t3) {\tiny{$ \ell_{\tiny{2}} $}};
  	\node[var] (trinode) at (t2) {\tiny{$ \ell_3 $}};
  \end{tikzpicture},
  \end{equs}
where the $\ell_i$ are linear combinations of the $k_i$ with coefficients in $ \lbrace-1, 0, 1 \rbrace $. We also assume that the previous trees are non-resonant satisfying the assumption \eqref{reso_condition}. Then, the arborification map is given  by
\begin{equs} \label{arbo_new}		\mathfrak{a}(  \tau )
	= \mathcal{M}_{\tiny{\text{c}}}\left( \mathfrak{a}  \otimes P_{A}   \right) \Delta_{\text{\tiny{BCK}}}  \tau,
\end{equs}
where $ P_A $ is the projections on the letters of $A$. As an example, one has
\begin{equs}
	\mathfrak{a}(\begin{tikzpicture}[scale=0.2,baseline=-5]
		\coordinate (root) at (0,0);
		\coordinate (tri) at (0,-2);
		\coordinate (t1) at (-2,2);
		\coordinate (t2) at (2,2);
		\coordinate (t3) at (0,3);
		\draw[kernels2,tinydots] (t1) -- (root);
		\draw[kernels2] (t2) -- (root);
		\draw[kernels2] (t3) -- (root);
		\draw[symbols] (root) -- (tri);
		\node[not] (rootnode) at (root) {};t
		\node[not,label= {[label distance=-0.2em]below: \scriptsize  $ $}] (trinode) at (tri) {};
		\node[var] (rootnode) at (t1) {\tiny{$ k_{\tiny{1}} $}};
		\node[var] (rootnode) at (t3) {\tiny{$ k_{\tiny{2}} $}};
		\node[var] (trinode) at (t2) {\tiny{$ k_3 $}};
	\end{tikzpicture}  ) & = \mathcal{M}_{\tiny{\text{c}}}\left( \mathfrak{a}  \otimes P_{A}   \right) \Delta_{\text{\tiny{BCK}}}  \begin{tikzpicture}[scale=0.2,baseline=-5]
	\coordinate (root) at (0,0);
	\coordinate (tri) at (0,-2);
	\coordinate (t1) at (-2,2);
	\coordinate (t2) at (2,2);
	\coordinate (t3) at (0,3);
	\draw[kernels2,tinydots] (t1) -- (root);
	\draw[kernels2] (t2) -- (root);
	\draw[kernels2] (t3) -- (root);
	\draw[symbols] (root) -- (tri);
	\node[not] (rootnode) at (root) {};t
	\node[not,label= {[label distance=-0.2em]below: \scriptsize  $ $}] (trinode) at (tri) {};
	\node[var] (rootnode) at (t1) {\tiny{$ k_{\tiny{1}} $}};
	\node[var] (rootnode) at (t3) {\tiny{$ k_{\tiny{2}} $}};
	\node[var] (trinode) at (t2) {\tiny{$ k_3 $}};
\end{tikzpicture} 
\\
& = \mathcal{M}_{\tiny{\text{c}}} (\mathfrak{a}(\begin{tikzpicture}[scale=0.2,baseline=-5]
	\coordinate (root) at (0,0);
	\coordinate (tri) at (0,-2);
	\coordinate (t1) at (-2,2);
	\coordinate (t2) at (2,2);
	\coordinate (t3) at (0,3);
	\draw[kernels2,tinydots] (t1) -- (root);
	\draw[kernels2] (t2) -- (root);
	\draw[kernels2] (t3) -- (root);
	\draw[symbols] (root) -- (tri);
	\node[not] (rootnode) at (root) {};t
	\node[not,label= {[label distance=-0.2em]below: \scriptsize  $ $}] (trinode) at (tri) {};
	\node[var] (rootnode) at (t1) {\tiny{$ k_{\tiny{1}} $}};
	\node[var] (rootnode) at (t3) {\tiny{$ k_{\tiny{2}} $}};
	\node[var] (trinode) at (t2) {\tiny{$ k_3 $}};
\end{tikzpicture}) \otimes P_A \one )  + \mathcal{M}_{\tiny{\text{c}}} ( \mathfrak{a}(\one) \otimes P_A \begin{tikzpicture}[scale=0.2,baseline=-5]
	\coordinate (root) at (0,0);
	\coordinate (tri) at (0,-2);
	\coordinate (t1) at (-2,2);
	\coordinate (t2) at (2,2);
	\coordinate (t3) at (0,3);
	\draw[kernels2,tinydots] (t1) -- (root);
	\draw[kernels2] (t2) -- (root);
	\draw[kernels2] (t3) -- (root);
	\draw[symbols] (root) -- (tri);
	\node[not] (rootnode) at (root) {};t
	\node[not,label= {[label distance=-0.2em]below: \scriptsize  $ $}] (trinode) at (tri) {};
	\node[var] (rootnode) at (t1) {\tiny{$ k_{\tiny{1}} $}};
	\node[var] (rootnode) at (t3) {\tiny{$ k_{\tiny{2}} $}};
	\node[var] (trinode) at (t2) {\tiny{$ k_3 $}};
\end{tikzpicture} )
=\begin{tikzpicture}[scale=0.2,baseline=-5]
	\coordinate (root) at (0,0);
	\coordinate (tri) at (0,-2);
	\coordinate (t1) at (-2,2);
	\coordinate (t2) at (2,2);
	\coordinate (t3) at (0,3);
	\draw[kernels2,tinydots] (t1) -- (root);
	\draw[kernels2] (t2) -- (root);
	\draw[kernels2] (t3) -- (root);
	\draw[symbols] (root) -- (tri);
	\node[not] (rootnode) at (root) {};t
	\node[not,label= {[label distance=-0.2em]below: \scriptsize  $ $}] (trinode) at (tri) {};
	\node[var] (rootnode) at (t1) {\tiny{$ k_{\tiny{1}} $}};
	\node[var] (rootnode) at (t3) {\tiny{$ k_{\tiny{2}} $}};
	\node[var] (trinode) at (t2) {\tiny{$ k_3 $}};
\end{tikzpicture}
	\\
		\mathfrak{a}(  \begin{tikzpicture}[scale=0.2,baseline=-5]
			\coordinate (root) at (0,0);
			\coordinate (tri) at (0,-2);
			\coordinate (t1) at (-2,2);
			\coordinate (t2) at (2,2);
			\coordinate (t3) at (0,2);
			\coordinate (t4) at (0,4);
			\coordinate (t41) at (-2,6);
			\coordinate (t42) at (2,6);
			\coordinate (t43) at (0,8);
			\draw[kernels2,tinydots] (t1) -- (root);
			\draw[kernels2] (t2) -- (root);
			\draw[kernels2] (t3) -- (root);
			\draw[symbols] (root) -- (tri);
			\draw[symbols] (t3) -- (t4);
			\draw[kernels2,tinydots] (t4) -- (t41);
			\draw[kernels2] (t4) -- (t42);
			\draw[kernels2] (t4) -- (t43);
			\node[not] (rootnode) at (root) {};
			\node[not] (rootnode) at (t4) {};
			\node[not] (rootnode) at (t3) {};
			\node[not,label= {[label distance=-0.2em]below: \scriptsize  $  $}] (trinode) at (tri) {};
			\node[var] (rootnode) at (t1) {\tiny{$ k_{\tiny{4}} $}};
			\node[var] (rootnode) at (t41) {\tiny{$ k_{\tiny{1}} $}};
			\node[var] (rootnode) at (t42) {\tiny{$ k_{\tiny{3}} $}};
			\node[var] (rootnode) at (t43) {\tiny{$ k_{\tiny{2}} $}};
			\node[var] (trinode) at (t2) {\tiny{$ k_5 $}};
		\end{tikzpicture}) & = \mathcal{M}_c\left( \mathfrak{a}( \begin{tikzpicture}[scale=0.2,baseline=-5]
		\coordinate (root) at (0,0);
		\coordinate (tri) at (0,-2);
		\coordinate (t1) at (-2,2);
		\coordinate (t2) at (2,2);
		\coordinate (t3) at (0,3);
		\draw[kernels2,tinydots] (t1) -- (root);
		\draw[kernels2] (t2) -- (root);
		\draw[kernels2] (t3) -- (root);
		\draw[symbols] (root) -- (tri);
		\node[not] (rootnode) at (root) {};t
		\node[not,label= {[label distance=-0.2em]below: \scriptsize  $ $}] (trinode) at (tri) {};
		\node[var] (rootnode) at (t1) {\tiny{$ k_{\tiny{1}} $}};
		\node[var] (rootnode) at (t3) {\tiny{$ k_{\tiny{2}} $}};
		\node[var] (trinode) at (t2) {\tiny{$ k_3 $}};
	\end{tikzpicture}) \otimes P_A \begin{tikzpicture}[scale=0.2,baseline=-5]
	\coordinate (root) at (0,0);
	\coordinate (tri) at (0,-2);
	\coordinate (t1) at (-2,2);
	\coordinate (t2) at (2,2);
	\coordinate (t3) at (0,3);
	\draw[kernels2,tinydots] (t1) -- (root);
	\draw[kernels2] (t2) -- (root);
	\draw[kernels2] (t3) -- (root);
	\draw[symbols] (root) -- (tri);
	\node[not] (rootnode) at (root) {};t
	\node[not,label= {[label distance=-0.2em]below: \scriptsize  $  $}] (trinode) at (tri) {};
	\node[var] (rootnode) at (t1) {\tiny{$ k_{\tiny{4}} $}};
	\node[var] (rootnode) at (t3) {\tiny{$ \ell_1 $}};
	\node[var] (trinode) at (t2) {\tiny{$ k_5 $}};
\end{tikzpicture}    \right) = \begin{tikzpicture}[scale=0.2,baseline=-5]
\coordinate (root) at (0,0);
\coordinate (tri) at (0,-2);
\coordinate (t1) at (-2,2);
\coordinate (t2) at (2,2);
\coordinate (t3) at (0,3);
\draw[kernels2,tinydots] (t1) -- (root);
\draw[kernels2] (t2) -- (root);
\draw[kernels2] (t3) -- (root);
\draw[symbols] (root) -- (tri);
\node[not] (rootnode) at (root) {};t
\node[not,label= {[label distance=-0.2em]below: \scriptsize  $ $}] (trinode) at (tri) {};
\node[var] (rootnode) at (t1) {\tiny{$ k_{\tiny{1}} $}};
\node[var] (rootnode) at (t3) {\tiny{$ k_{\tiny{2}} $}};
\node[var] (trinode) at (t2) {\tiny{$ k_3 $}};
\end{tikzpicture} \; \; \begin{tikzpicture}[scale=0.2,baseline=-5]
\coordinate (root) at (0,0);
\coordinate (tri) at (0,-2);
\coordinate (t1) at (-2,2);
\coordinate (t2) at (2,2);
\coordinate (t3) at (0,3);
\draw[kernels2,tinydots] (t1) -- (root);
\draw[kernels2] (t2) -- (root);
\draw[kernels2] (t3) -- (root);
\draw[symbols] (root) -- (tri);
\node[not] (rootnode) at (root) {};t
\node[not,label= {[label distance=-0.2em]below: \scriptsize  $  $}] (trinode) at (tri) {};
\node[var] (rootnode) at (t1) {\tiny{$ k_{\tiny{4}} $}};
\node[var] (rootnode) at (t3) {\tiny{$ \ell_1 $}};
\node[var] (trinode) at (t2) {\tiny{$ k_5 $}};
\end{tikzpicture} 
\\ \mathfrak{a}(  \begin{tikzpicture}[scale=0.2,baseline=-5]
	\coordinate (root) at (0,0);
	\coordinate (tri) at (0,-2);
	\coordinate (t1) at (-2,2);
	\coordinate (t2) at (2,2);
	\coordinate (t3) at (0,3);
	\coordinate (t4) at (4,4);
	\coordinate (t41) at (2,6);
	\coordinate (t42) at (6,6);
	\coordinate (t43) at (4,8);
		\coordinate (t4l) at (-4,4);
	\coordinate (t41l) at (-2,6);
	\coordinate (t42l) at (-6,6);
	\coordinate (t43l) at (-4,8);
	\draw[kernels2,tinydots] (t1) -- (root);
	\draw[kernels2] (t2) -- (root);
	\draw[kernels2] (t3) -- (root);
	\draw[symbols] (root) -- (tri);
	\draw[symbols] (t2) -- (t4);
	\draw[kernels2,tinydots] (t4) -- (t41);
	\draw[kernels2] (t4) -- (t42);
	\draw[kernels2] (t4) -- (t43);\draw[symbols,tinydots] (t1) -- (t4l);
	\draw[kernels2,tinydots] (t4l) -- (t41l);
	\draw[kernels2] (t4l) -- (t42l);
	\draw[kernels2,tinydots] (t4l) -- (t43l);
	
	\node[not] (rootnode) at (root) {};
	\node[not] (rootnode) at (t4) {};
		\node[var] (rootnode) at (t3) {\tiny{$ k_{\tiny{4}} $}};
	\node[not,label= {[label distance=-0.2em]below: \scriptsize  $  $}] (trinode) at (tri) {};
	\node[not] (rootnode) at (t1) {};
	\node[var] (rootnode) at (t41) {\tiny{$ k_{\tiny{5}} $}};
	\node[var] (rootnode) at (t42) {\tiny{$ k_{\tiny{7}} $}};
	\node[var] (rootnode) at (t43) {\tiny{$ k_{\tiny{6}} $}};
	\node[var] (rootnode) at (t41l) {\tiny{$ k_{\tiny{3}} $}};
	\node[var] (rootnode) at (t42l) {\tiny{$ k_{\tiny{1}} $}};
	\node[var] (rootnode) at (t43l) {\tiny{$ k_{\tiny{2}} $}};
	\node[not] (trinode) at (t2) {};
\end{tikzpicture}) & =\mathcal{M}_c\left( \mathfrak{a}( \begin{tikzpicture}[scale=0.2,baseline=-5]
\coordinate (root) at (0,0);
\coordinate (tri) at (0,-2);
\coordinate (t1) at (-2,2);
\coordinate (t2) at (2,2);
\coordinate (t3) at (0,3);
\draw[kernels2] (t1) -- (root);
\draw[kernels2,tinydots] (t2) -- (root);
\draw[kernels2,tinydots] (t3) -- (root);
\draw[symbols,tinydots] (root) -- (tri);
\node[not] (rootnode) at (root) {};t
\node[not,label= {[label distance=-0.2em]below: \scriptsize  $  $}] (trinode) at (tri) {};
\node[var] (rootnode) at (t1) {\tiny{$ k_{\tiny{1}} $}};
\node[var] (rootnode) at (t3) {\tiny{$ k_2 $}};
\node[var] (trinode) at (t2) {\tiny{$ k_3 $}};
\end{tikzpicture}  \; \cdot \; \begin{tikzpicture}[scale=0.2,baseline=-5]
\coordinate (root) at (0,0);
\coordinate (tri) at (0,-2);
\coordinate (t1) at (-2,2);
\coordinate (t2) at (2,2);
\coordinate (t3) at (0,3);
\draw[kernels2,tinydots] (t1) -- (root);
\draw[kernels2] (t2) -- (root);
\draw[kernels2] (t3) -- (root);
\draw[symbols] (root) -- (tri);
\node[not] (rootnode) at (root) {};t
\node[not,label= {[label distance=-0.2em]below: \scriptsize  $  $}] (trinode) at (tri) {};
\node[var] (rootnode) at (t1) {\tiny{$ k_{\tiny{5}} $}};
\node[var] (rootnode) at (t3) {\tiny{$ k_6 $}};
\node[var] (trinode) at (t2) {\tiny{$ k_7 $}};
\end{tikzpicture}) \otimes P_A \begin{tikzpicture}[scale=0.2,baseline=-5]
\coordinate (root) at (0,0);
\coordinate (tri) at (0,-2);
\coordinate (t1) at (-2,2);
\coordinate (t2) at (2,2);
\coordinate (t3) at (0,3);
\draw[kernels2,tinydots] (t1) -- (root);
\draw[kernels2] (t2) -- (root);
\draw[kernels2] (t3) -- (root);
\draw[symbols] (root) -- (tri);
\node[not] (rootnode) at (root) {};t
\node[not,label= {[label distance=-0.2em]below: \scriptsize  $ $}] (trinode) at (tri) {};
\node[var] (rootnode) at (t1) {\tiny{$ \ell_{\tiny{1}} $}};
\node[var] (rootnode) at (t3) {\tiny{$ k_{\tiny{4}} $}};
\node[var] (trinode) at (t2) {\tiny{$ \ell_5 $}};
\end{tikzpicture}    \right)
\\ &  =\left( \mathfrak{a}( \begin{tikzpicture}[scale=0.2,baseline=-5]
	\coordinate (root) at (0,0);
	\coordinate (tri) at (0,-2);
	\coordinate (t1) at (-2,2);
	\coordinate (t2) at (2,2);
	\coordinate (t3) at (0,3);
	\draw[kernels2] (t1) -- (root);
	\draw[kernels2,tinydots] (t2) -- (root);
	\draw[kernels2,tinydots] (t3) -- (root);
	\draw[symbols,tinydots] (root) -- (tri);
	\node[not] (rootnode) at (root) {};t
	\node[not,label= {[label distance=-0.2em]below: \scriptsize  $  $}] (trinode) at (tri) {};
	\node[var] (rootnode) at (t1) {\tiny{$ k_{\tiny{1}} $}};
	\node[var] (rootnode) at (t3) {\tiny{$ k_2 $}};
	\node[var] (trinode) at (t2) {\tiny{$ k_3 $}};
\end{tikzpicture} ) \shuffle \mathfrak{a} ( \begin{tikzpicture}[scale=0.2,baseline=-5]
	\coordinate (root) at (0,0);
	\coordinate (tri) at (0,-2);
	\coordinate (t1) at (-2,2);
	\coordinate (t2) at (2,2);
	\coordinate (t3) at (0,3);
	\draw[kernels2,tinydots] (t1) -- (root);
	\draw[kernels2] (t2) -- (root);
	\draw[kernels2] (t3) -- (root);
	\draw[symbols] (root) -- (tri);
	\node[not] (rootnode) at (root) {};t
	\node[not,label= {[label distance=-0.2em]below: \scriptsize  $  $}] (trinode) at (tri) {};
	\node[var] (rootnode) at (t1) {\tiny{$ k_{\tiny{5}} $}};
	\node[var] (rootnode) at (t3) {\tiny{$ k_6 $}};
	\node[var] (trinode) at (t2) {\tiny{$ k_7 $}};
\end{tikzpicture}) \right)  \begin{tikzpicture}[scale=0.2,baseline=-5]
	\coordinate (root) at (0,0);
	\coordinate (tri) at (0,-2);
	\coordinate (t1) at (-2,2);
	\coordinate (t2) at (2,2);
	\coordinate (t3) at (0,3);
	\draw[kernels2,tinydots] (t1) -- (root);
	\draw[kernels2] (t2) -- (root);
	\draw[kernels2] (t3) -- (root);
	\draw[symbols] (root) -- (tri);
	\node[not] (rootnode) at (root) {};t
	\node[not,label= {[label distance=-0.2em]below: \scriptsize  $ $}] (trinode) at (tri) {};
	\node[var] (rootnode) at (t1) {\tiny{$ \ell_{\tiny{1}} $}};
	\node[var] (rootnode) at (t3) {\tiny{$ k_{\tiny{4}} $}};
	\node[var] (trinode) at (t2) {\tiny{$ \ell_5 $}};
\end{tikzpicture}
\\  &  = \begin{tikzpicture}[scale=0.2,baseline=-5]
	\coordinate (root) at (0,0);
	\coordinate (tri) at (0,-2);
	\coordinate (t1) at (-2,2);
	\coordinate (t2) at (2,2);
	\coordinate (t3) at (0,3);
	\draw[kernels2] (t1) -- (root);
	\draw[kernels2,tinydots] (t2) -- (root);
	\draw[kernels2,tinydots] (t3) -- (root);
	\draw[symbols,tinydots] (root) -- (tri);
	\node[not] (rootnode) at (root) {};t
	\node[not,label= {[label distance=-0.2em]below: \scriptsize  $  $}] (trinode) at (tri) {};
	\node[var] (rootnode) at (t1) {\tiny{$ k_{\tiny{1}} $}};
	\node[var] (rootnode) at (t3) {\tiny{$ k_2 $}};
	\node[var] (trinode) at (t2) {\tiny{$ k_3 $}};
\end{tikzpicture} \; \; \begin{tikzpicture}[scale=0.2,baseline=-5]
	\coordinate (root) at (0,0);
	\coordinate (tri) at (0,-2);
	\coordinate (t1) at (-2,2);
	\coordinate (t2) at (2,2);
	\coordinate (t3) at (0,3);
	\draw[kernels2,tinydots] (t1) -- (root);
	\draw[kernels2] (t2) -- (root);
	\draw[kernels2] (t3) -- (root);
	\draw[symbols] (root) -- (tri);
	\node[not] (rootnode) at (root) {};t
	\node[not,label= {[label distance=-0.2em]below: \scriptsize  $  $}] (trinode) at (tri) {};
	\node[var] (rootnode) at (t1) {\tiny{$ k_{\tiny{5}} $}};
	\node[var] (rootnode) at (t3) {\tiny{$ k_6 $}};
	\node[var] (trinode) at (t2) {\tiny{$ k_7 $}};
\end{tikzpicture} \; \; \begin{tikzpicture}[scale=0.2,baseline=-5]
	\coordinate (root) at (0,0);
	\coordinate (tri) at (0,-2);
	\coordinate (t1) at (-2,2);
	\coordinate (t2) at (2,2);
	\coordinate (t3) at (0,3);
	\draw[kernels2,tinydots] (t1) -- (root);
	\draw[kernels2] (t2) -- (root);
	\draw[kernels2] (t3) -- (root);
	\draw[symbols] (root) -- (tri);
	\node[not] (rootnode) at (root) {};t
	\node[not,label= {[label distance=-0.2em]below: \scriptsize  $ $}] (trinode) at (tri) {};
	\node[var] (rootnode) at (t1) {\tiny{$ \ell_{\tiny{1}} $}};
	\node[var] (rootnode) at (t3) {\tiny{$ k_{\tiny{4}} $}};
	\node[var] (trinode) at (t2) {\tiny{$ \ell_5 $}};
\end{tikzpicture} +  \begin{tikzpicture}[scale=0.2,baseline=-5]
\coordinate (root) at (0,0);
\coordinate (tri) at (0,-2);
\coordinate (t1) at (-2,2);
\coordinate (t2) at (2,2);
\coordinate (t3) at (0,3);
\draw[kernels2,tinydots] (t1) -- (root);
\draw[kernels2] (t2) -- (root);
\draw[kernels2] (t3) -- (root);
\draw[symbols] (root) -- (tri);
\node[not] (rootnode) at (root) {};t
\node[not,label= {[label distance=-0.2em]below: \scriptsize  $  $}] (trinode) at (tri) {};
\node[var] (rootnode) at (t1) {\tiny{$ k_{\tiny{5}} $}};
\node[var] (rootnode) at (t3) {\tiny{$ k_6 $}};
\node[var] (trinode) at (t2) {\tiny{$ k_7 $}};
\end{tikzpicture} \; \; \begin{tikzpicture}[scale=0.2,baseline=-5]
\coordinate (root) at (0,0);
\coordinate (tri) at (0,-2);
\coordinate (t1) at (-2,2);
\coordinate (t2) at (2,2);
\coordinate (t3) at (0,3);
\draw[kernels2] (t1) -- (root);
\draw[kernels2,tinydots] (t2) -- (root);
\draw[kernels2,tinydots] (t3) -- (root);
\draw[symbols,tinydots] (root) -- (tri);
\node[not] (rootnode) at (root) {};t
\node[not,label= {[label distance=-0.2em]below: \scriptsize  $  $}] (trinode) at (tri) {};
\node[var] (rootnode) at (t1) {\tiny{$ k_{\tiny{1}} $}};
\node[var] (rootnode) at (t3) {\tiny{$ k_2 $}};
\node[var] (trinode) at (t2) {\tiny{$ k_3 $}};
\end{tikzpicture} \; \; \begin{tikzpicture}[scale=0.2,baseline=-5]
\coordinate (root) at (0,0);
\coordinate (tri) at (0,-2);
\coordinate (t1) at (-2,2);
\coordinate (t2) at (2,2);
\coordinate (t3) at (0,3);
\draw[kernels2,tinydots] (t1) -- (root);
\draw[kernels2] (t2) -- (root);
\draw[kernels2] (t3) -- (root);
\draw[symbols] (root) -- (tri);
\node[not] (rootnode) at (root) {};t
\node[not,label= {[label distance=-0.2em]below: \scriptsize  $ $}] (trinode) at (tri) {};
\node[var] (rootnode) at (t1) {\tiny{$ \ell_{\tiny{1}} $}};
\node[var] (rootnode) at (t3) {\tiny{$ k_{\tiny{4}} $}};
\node[var] (trinode) at (t2) {\tiny{$ \ell_5 $}};
\end{tikzpicture}.
\end{equs}
In the sequel, we will consider a map $ \Psi : T(A) \rightarrow \mathcal{C}^{\infty}(\mathbb{R},\mathbb{C}) $  depending on a paremeter $N$, defined 
for every  word $ w = T_k \cdots T_1 $ by
\begin{equs}\label{Phi}
	\Psi\left( w \right)(t) = \one_{ \cap_{m=1}^k A(T_m...T_1)^c}	 \frac{  e^{i \sum_{j=1}^k \mathscr{F}(T_k)t}	}{\prod_{m=1}^{k} \sum_{j=1}^{m} \mathscr{F}(T_j)}
\end{equs}
where
\begin{equation*} 
	A(w) := \big\{ |  \sum_{j=1}^k \mathscr{F}(T_j) | \le N \big\}. 
\end{equation*}
This choice is coming from \cite{GKO13} but other choices are possible depending on which type of estimates, one wants to get. For example in \cite{T17}, $N$ is replaced by $ (2k + 1)K)^{\theta} $ making the bound depending on the number of letters. 
The map $ \Psi$ is not a character in the sense that there exist $w_1, w_2$ such that
\begin{equs}
	\Psi\left( w_1 \shuffle w_2 \right)(t)  \neq 	\Psi\left( w_1  \right)(t)\Psi\left( w_2  \right)(t).
\end{equs}
This is due to the choice of the $A(w)$.
But it is still connected to the iterated integrals described in the previous section. Indeed, we consider
\begin{equs}
	X_{0t}\left( w \right) = \int_{0 < t_1 < \cdots <t_k < t}  dX_{t_1}^{T_1} \cdots dX_{t_k}^{T_k}, \quad 
	X_{t}^{T} = \frac{e^{i \mathscr{F}(T)t}	}{i\mathscr{F}(T)}.
\end{equs}
Then, one has
\begin{equs}
		X_{0t}\left( w \right) = \int_{0 < t_1 < \cdots <t_k < t} e^{i \sum_{j=1}^k t_j \mathscr{F}(T_j)}  dt_1 \cdots dt_k.
\end{equs}
One can notice that the term $ \frac{e^{i \sum_{j=1}^k \mathscr{F}(T_k)t}	}{\prod_{m=1}^{k} \sum_{j=1}^{m} \mathscr{F}(T_j)} $ appears in the previous integral as the leading term: One performs the various integrations in time and removes always the term which is evaluated at zero. In fact, if we remove the indicator function from $ \Psi(T) $, one obtains the following map:
\begin{equs} \label{def_char}
\tilde{\Psi}\left( w \right)(t) = \tilde{\Psi}\left( T_{k-1} \cdots T_1  \right)(t) \frac{e^{i \mathscr{F}(T_k)t}	}{\sum_{j=1}^{k} \mathscr{F}(T_j)}
\end{equs} 
with the non-recursive definition:
\begin{equs}
		\tilde{\Psi}\left( w \right)(t) = 	 \frac{e^{i \sum_{j=1}^k \mathscr{F}(T_k)t}	}{\prod_{m=1}^{k} \sum_{j=1}^{m} \mathscr{F}(T_j)}
\end{equs}
and one has
\begin{proposition} \label{character_prop} The map $	\tilde{\Psi}\left( \cdot \right)(t)$ is a character for the shuffle Hopf algebra in the sense that for every words $ w $ and $ \bar{w} $
	\begin{equs}
		\tilde{\Psi}\left( w \shuffle \bar{w} \right)(t) = \tilde{\Psi}\left( w  \right)(t) \tilde{\Psi}\left(  \bar{w} \right)(t).
	\end{equs}
	\end{proposition}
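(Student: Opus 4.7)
The plan is to prove the identity by induction on the total length $k+\ell$ of the two words $w=T_k\cdots T_1$ and $\bar w=\bar T_\ell\cdots \bar T_1$. The base case is when at least one of them is the empty word $\varepsilon$: then $\tilde\Psi(\varepsilon)(t)=1$ (empty product in the denominator, empty sum in the exponent) and $w\shuffle\varepsilon=w$, so the identity is immediate.

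For the inductive step, set $\phi_j=\mathscr F(T_j)$, $\bar\phi_j=\mathscr F(\bar T_j)$, and write
\begin{equs}
\sigma_w=\sum_{j=1}^k\phi_j,\qquad \sigma_{\bar w}=\sum_{j=1}^\ell\bar\phi_j.
\end{equs}
Observe that the recursive formula \eqref{def_char} can be read as follows: for any non-empty word $u$ with leftmost letter $a$ of frequency $\phi_a$, one has
\begin{equs}
\tilde\Psi(au)(t)=\frac{e^{i\phi_a t}}{\phi_a+\sigma_u}\,\tilde\Psi(u)(t),
\end{equs}
where $\sigma_u$ denotes the sum of frequencies of the letters composing $u$. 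I would then combine this with the standard recursion for the shuffle product
\begin{equs}
w\shuffle\bar w=T_k\bigl((T_{k-1}\cdots T_1)\shuffle\bar w\bigr)+\bar T_\ell\bigl(w\shuffle(\bar T_{\ell-1}\cdots\bar T_1)\bigr),
\end{equs}
and apply $\tilde\Psi$. The total frequency of each of the two inner shuffles is $\sigma_w+\sigma_{\bar w}$ minus the frequency of the letter that was pulled out, so after one application of the recursion the denominator introduced in each of the two terms is $\sigma_w+\sigma_{\bar w}$.

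Applying the induction hypothesis to each inner shuffle, the two terms rearrange to
\begin{equs}
\tilde\Psi(w)(t)\,\tilde\Psi(\bar w)(t)\cdot\frac{\sigma_w}{\sigma_w+\sigma_{\bar w}}\quad\text{and}\quad \tilde\Psi(w)(t)\,\tilde\Psi(\bar w)(t)\cdot\frac{\sigma_{\bar w}}{\sigma_w+\sigma_{\bar w}},
\end{equs}
respectively, after reinserting the factor $\sigma_w$ (resp.\ $\sigma_{\bar w}$) needed to turn $\prod_{m=1}^{k-1}\sum_{j=1}^m\phi_j$ into $\prod_{m=1}^{k}\sum_{j=1}^m\phi_j$ (resp.\ for $\bar w$). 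Summing and using $\frac{\sigma_w}{\sigma_w+\sigma_{\bar w}}+\frac{\sigma_{\bar w}}{\sigma_w+\sigma_{\bar w}}=1$ closes the induction.

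There is no deep obstacle: the only delicate point is bookkeeping, namely checking that the extra factor introduced when recombining denominators (to upgrade the product $\prod_{m=1}^{k-1}$ to $\prod_{m=1}^k$ for the first term, and symmetrically for the second) exactly matches the one produced by the recursive peeling in $\tilde\Psi$. This matching is precisely why $\tilde\Psi$ is a character while its truncated variant $\Psi$ of \eqref{Phi} is not, since the indicator functions $\mathbf 1_{A(\cdot)^c}$ in $\Psi$ break the algebraic cancellation $\frac{\sigma_w}{\sigma_w+\sigma_{\bar w}}+\frac{\sigma_{\bar w}}{\sigma_w+\sigma_{\bar w}}=1$.
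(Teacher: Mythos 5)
Your proposal is correct and follows essentially the same route as the paper: induction on the total length, the left-letter shuffle recursion, the recursive peeling formula for $\tilde\Psi$, and the final cancellation $\frac{\sigma_w}{\sigma_w+\sigma_{\bar w}}+\frac{\sigma_{\bar w}}{\sigma_w+\sigma_{\bar w}}=1$. The only cosmetic difference is that the paper also writes out the length-two case explicitly before the general step, which your argument subsumes.
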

\begin{proof}
	We proceed by recurrence on $\ell(w) + \ell(\bar{w})$. For $\ell(w) + \ell(\bar{w}) = 0 $
 or $ \ell(w) + \ell(\bar{w}) = 1 $, this is straightforward as one of the word $w_i$ is the empty word $\varepsilon$. When, $\ell(w) + \ell(\bar{w}) =2$, the non-trivial case is when one has $w =T$ and $ w = \bar{T} $, then
\begin{equs}
		\tilde{\Psi}\left( T \shuffle \bar{T} \right)(t) & = \tilde{\Psi}\left( T  \bar{T} \right)(t) + \tilde{\Psi}\left( \bar{T} T \right)(t)
		\\ & =  \frac{e^{i  (\mathscr{F}(T) +\mathscr{F}(\bar{T})) t}	}{\mathscr{F}(\bar{T})(\mathscr{F}(T) +\mathscr{F}(\bar{T}))} + \frac{e^{i  (\mathscr{F}(T) +\mathscr{F}(\bar{T})) t}	}{\mathscr{F}(T)(\mathscr{F}(T) +\mathscr{F}(\bar{T}))} 
		\\ &  = \frac{e^{i  (\mathscr{F}(T) +\mathscr{F}(\bar{T})) t}	}{\mathscr{F}(T) +\mathscr{F}(\bar{T})} = 	\tilde{\Psi}\left( T  \right)(t) 	\tilde{\Psi}\left(  \bar{T} \right)(t).
\end{equs}
We suppose the property true for $\ell(w) + \ell(\bar{w}) \leq r-1$.
One first observes that for two words $w = T_m ...  T_1$ and $\bar{w} = \bar{T}_n ...  \bar{T}_1$, one has
\begin{equs}
	w \shuffle \bar{w} =  T_m (T_{m-1} ...  T_{1} \shuffle \bar{w}) + \bar{T}_n( w\shuffle \bar{T}_{n-1} ...  \bar{T}_{1}) .
\end{equs}
Using this identity, one has
\begin{equs}
&	\tilde{\Psi}\left( w \shuffle \bar{w} \right)(t)
	\\
	 & =  \tilde{\Psi}\left( T_m (T_{m-1} ...  T_{1} \shuffle \bar{w})  \right)(t) + \tilde{\Psi}\left(\bar{T}_n( w\shuffle \bar{T}_{n-1} ...  \bar{T}_{1})   \right)(t)
	 \\ & = \frac{e^{i  \mathscr{F}(T_m)  t} \tilde{\Psi}\left( T_{m-1} ...  T_{1} \shuffle \bar{w}  \right)(t)	}{\sum_{i=1}^m \mathscr{F}(T_i) + \sum_{i=1}^n \mathscr{F}(\bar{T}_i)} + \frac{e^{i  \mathscr{F}(\bar{T}_n) t} \tilde{\Psi}\left(w\shuffle \bar{T}_{n-1} ...  \bar{T}_{1} \right)(t)	}{\sum_{i=1}^m \mathscr{F}(T_i) + \sum_{i=1}^n \mathscr{F}(\bar{T}_i)}.
\end{equs}
By applying the recurrence hypothesis, one gets
\begin{equs}
	\tilde{\Psi}\left( T_{m-1} ...  T_{1} \shuffle \bar{w}  \right)(t) & = \tilde{\Psi}\left( T_{m-1} ...  T_{1}   \right)(t) \Psi\left(  \bar{w}  \right)(t)
	\\  \tilde{\Psi}\left( w\shuffle \bar{T}_{n-1} ...  \bar{T}_{1}  \right)(t) &= \tilde{\Psi}\left( w  \right)(t) \tilde{\Psi}\left( \bar{T}_{n-1} ...  \bar{T}_{1}  \right)(t).
\end{equs}
Together with \eqref{def_char}, this leads to 
\begin{equs}
	& \tilde{\Psi}\left( w \shuffle \bar{w} \right)(t)
	\\
	& =   \frac{e^{i  \mathscr{F}(T_m)  t} \tilde{\Psi}\left( T_{m-1} ...  T_{1}   \right)(t) \tilde{\Psi}\left(  \bar{w}  \right)(t)	}{\sum_{i=1}^m \mathscr{F}(T_i) + \sum_{i=1}^n \mathscr{F}(\bar{T}_i)} + \frac{e^{i  \mathscr{F}(\bar{T}_n) t} \tilde{\Psi}\left( \bar{T}_{n-1} ...  \bar{T}_{1}   \right)(t) \tilde{\Psi}\left(  w  \right)(t)	}{\sum_{i=1}^m \mathscr{F}(T_i) + \sum_{i=1}^n \mathscr{F}(\bar{T}_i)}
	\\ & = \frac{(\sum_{i=1}^m \mathscr{F}(T_i)) \tilde{\Psi}\left( w   \right)(t) \tilde{\Psi}\left(  \bar{w}  \right)(t)	}{\sum_{i=1}^m \mathscr{F}(T_i) + \sum_{i=1}^n \mathscr{F}(\bar{T}_i)} + \frac{ (\sum_{i=1}^n \mathscr{F}(\bar{T}_i))  \tilde{\Psi}\left( w  \right)(t) \tilde{\Psi}\left(  \bar{w}  \right)(t)	}{\sum_{i=1}^m \mathscr{F}(T_i) + \sum_{i=1}^n \mathscr{F}(\bar{T}_i)}
	\\ & = 	\tilde{\Psi}\left( w  \right)(t) 	\tilde{\Psi}\left(  \bar{w} \right)(t).
\end{equs}
	\end{proof}
The previous proof is inspired from \cite[Lemme II.8]{Cr09}. The same character appears also in \cite[Prop. 6]{FM}.

\section{Normal forms}

\label{Sec::4}
We start by recalling the normal form from \cite{GKO13} providing explicit expression for the first terms of this decomposition. Then, we provide formulae for the general formula using the combinatorics introduced in the previous sections. We have decided to work out this expansion for the one dimensional NLS as in the original paper but all the formalism developed in the previous sections is robust and can be applied to other dispersive equations and  systems like the Zakharov system.
 
We recall 
the following cubic nonlinear Schr\"odinger equation (NLS)  on the one dimensional torus $\mathbb{T}$:
\begin{align}
	\begin{cases}
		i \partial_t u + \partial_x^2  u  = | u |^{2} u \\
		u |_{t = 0} = u_0,
	\end{cases}
	\quad (x, t) \in \mathbb{T} \times \mathbb{R}.
	\label{NLS1}
\end{align}
We rewrite the equation using twisting variables:
\begin{equs}
	v(t) =  e^{-it\partial^2_x} {u}(t).
\end{equs}
On the Fourier side, we have
$v_k(t) = e^{i t k^2} u_k(t)$ and
\begin{align}
	\begin{split}
		\partial_t v_k 
		& =  - i 
		\sum_{\substack{k = -k_1 +k_2 + k_3\\ k_2\ne k_1, k_3} }
		e^{ i \Phi(\bar{k})t } 
		\bar{v}_{k_1} v_{k_2} v_{k_3}
		- 2  i 
		\sum_{k_1 \in \mathbb{Z} } 	\bar{v}_{k_1} v_{k_1} v_{k}
	 \\
		& =:   \mathcal{N}^{(1)}(v)(k) +   \mathcal{R}^{(1)}(v)(k) .
	\end{split}
	\label{NLS4}
\end{align}
Here,  the  phase function $\Phi(\bar{n})$ is defined by 
\begin{align}
	\Phi(\bar{k}):& = \Phi(k, k_1, k_2, k_3) = k^2 + k_1^2 - k_2^2- k_3^2 
\end{align}
with $k=-k_1 + k_2 + k_3$. One can make a first connection with the combinatorial formalism developed in the previous sections by making the following observation 
\begin{equs}
	\mathcal{N}^{(1)}(v)(k) = -  \sum_{T_1 \in \hat{\CT}^{1,k}_{0}} i e^{i t \mathscr{F}(T_1) } \frac{\Upsilon[T_1](v)}{S(T_1)}, \quad \mathcal{R}^{(1)}(v)(k) = -  \sum_{\hat{T}_1 \in \hat{\CT}^{1,k}_{\text{\tiny{res}},0}} i \frac{\Upsilon[\hat{T}_1](v)}{S(\hat{T}_1)}.
	\end{equs}
We decompose $\mathcal{N}^{(1)}$ into 
\begin{equation} 
	\mathcal{N}^{(1)} = \mathcal{N}_{1}^{(1)} + \mathcal{N}_{2}^{(1)},
\end{equation}
where $\mathcal{N}_{1}^{(1)}$ is the restriction of $\mathcal{N}^{(1)}$
onto $A(T_1)$.
 Therefore, one has
\begin{equs}
	\mathcal{N}_{2}^{(1)}(v)(k) = -   \sum_{T_1 \in \hat{\CT}^{1,k}_{0}} \one_{A(T_1)^c} \, i e^{i t \mathscr{F}(T_1) } \frac{\Upsilon[T_1](v)}{S(T_1)}.
\end{equs}
 We want to derive a normal form equation:
 \begin{align}
 	\begin{split}
 		v(t) 
 		=   v(0) 
 		&  +    \sum_{j = 2}^\infty  \mathcal{N}_0^{(j)}(v)(t)
 		- \sum_{j = 2}^\infty \mathcal{N}_0^{(j)}(v)(0)  \\
 		& 
 		+ \int_0^t \bigg\{
 		\sum_{j = 1}^\infty \mathcal{N}_{1}^{(j)}(v)(t')   + \sum_{j = 1}^\infty \mathcal{R}^{(j)}(v)(t')\bigg\} dt',
 	\end{split}
 	\label{NFE1}
 \end{align}
where $ \mathcal{N}^{(j)}_0 $  are time-dependent $(2j-1)$-linear operators while 
$ \mathcal{N}^{(j)}_1 $ and $ \mathcal{R}^{(j)} $
are time-dependent $(2j+1)$-linear operators.
One can continue the previous computation by
 \begin{equs} \label{N12}
 	\begin{aligned}
 	\mathcal{N}_{2}^{(1)}(v) (k)  & =  
 	\sum_{ A_1(k)^c} 
 	\partial_t\bigg( \frac{e^{ i \Phi(\bar{k})t } }{\Phi(\bar{k})}\bigg)
 	\bar{v}_{k_1} v_{k_2} v_{k_3} \notag \\
 	& = 
 	\sum_{ A_1(k)^c} 
 	\partial_t \bigg[
 	\frac{e^{ i \Phi(\bar{k})t } }{\Phi(\bar{k})}
 	\bar{v}_{k_1} v_{k_2} v_{k_3}\bigg]  
 	-  \sum_{A_1(k)^c} 
 	\frac{e^{ i \Phi(\bar{k})t } }{\Phi(\bar{k})}
 	\partial_t\big( \bar{v}_{k_1} v_{k_2} v_{k_3}\big) \notag \\
 	& = 
 	\partial_t \bigg[
 	\sum_{ A_1(k)^c} 
 	\frac{e^{ i \Phi(\bar{k})t } }{\Phi(\bar{k})}
 	\bar{v}_{k_1} v_{k_2} v_{k_3}\bigg]  
 	-  \sum_{A_1(k)^c} 
 	\frac{e^{ i \Phi(\bar{k})t } }{\Phi(\bar{k})}
 	\partial_t\big( \bar{v}_{k_1} v_{k_2} v_{k_3}\big) \notag \\
 	& =: \partial_t \mathcal{N}_{0}^{(2)} (v) (k) + \widetilde{\mathcal{N}}^{(2)} (v) (k). 
 	\end{aligned}
 \end{equs}
The key point of the previous computation is the integration by parts that allows to move the time derivative on the $v_{k_i}$. Using the Leibniz rule, one has
\begin{equs}
	\widetilde{\mathcal{N}}^{(2)} (v) (k) = 	-  \sum_{A_1(k)^c} 
	\frac{e^{ i \Phi(\bar{k})t } }{\Phi(\bar{k})} \left( 
	\partial_t \bar{v}_{k_1} v_{k_2} v_{k_3} + \bar{v}_{k_1} \partial_t v_{k_2} v_{k_3} + 
	\bar{v}_{k_1} v_{k_2} \partial_t  v_{k_3}  \right)
\end{equs}
Then, we replace $\partial_t \bar{v}_{k_1}, \partial_t v_{k_2},  \partial_t  v_{k_3}$ using \eqref{NLS4} to get
\begin{equs}
	&	\widetilde{\mathcal{N}}^{(2)} (v) (k) = \mathcal{N}^{(2)}(v)(k) +   \mathcal{R}^{(2)}(v)(k) 
		\\ & -  \sum_{A_1(k)^c} 
		\frac{e^{ i \Phi(\bar{k})t } }{\Phi(\bar{k})} \left( 
		\overline{\mathcal{N}^{(1)}(v)(k_1)} v_{k_2} v_{k_3} + \bar{v}_{k_1} \mathcal{N}^{(1)}(v)(k_2) v_{k_3} + 
		\bar{v}_{k_1} v_{k_2} \mathcal{N}^{(1)}(v)(k_3)  \right)
		\\ &= 
		-  \sum_{A_1(k)^c} 
		\frac{e^{ i \Phi(\bar{k})t } }{\Phi(\bar{k})} \left( 
		\overline{\mathcal{R}^{(1)}(v)(k_1)} v_{k_2} v_{k_3} + \bar{v}_{k_1} \mathcal{R}^{(1)}(v)(k_2) v_{k_3} + 
		\bar{v}_{k_1} v_{k_2} \mathcal{R}^{(1)}(v)(k_3)  \right)
\end{equs}

With the previous computations, one is able to write up the following decomposition:
 \begin{equs}
		v(t) 
		&=   v(0) 
		  +     \mathcal{N}_0^{(2)}(v)(t)
		-  \mathcal{N}_0^{(2)}(v)(0)  
		+ \int_0^t \bigg\{
	 \mathcal{N}_{1}^{(1)}(v)(t')   + \sum_{j = 1}^2 \mathcal{R}^{(j)}(v)(t')\bigg\} dt' \\ & + \int_0^t 
	 \tilde{\mathcal{N}}^{(2)}(v)(t')    dt'.
\end{equs}
Then, one has to continue the expansion by applying the same decomposition to $  \mathcal{N}^{(2)} $ and using the same tricks. We want to provide a proof more conceptual by using combinatorics around decorated trees and the shuffle Hopf algebra. We first see the substitution of the $ \partial_t v_{k_i} $ as a simple composition of two Butcher series that involves the grafting product. This is the subject of the next proposition
\begin{proposition} \label{morphism_grafting} One has
	\begin{equs}
		\partial_t \Upsilon[T](v) = -  \sum_{T_1 \in \hat{\CT}^{1}} i \frac{e^{i \mathscr{F}(T_1) t}}{S(T_1)} \Upsilon[ T_1 \curvearrowright T ](v) -  \sum_{\hat{T}_1 \in \hat{\CT}^{1}_{\text{\tiny{res}}}} i \frac{\Upsilon[ \hat{T}_1 \curvearrowright T ]}{S(\hat{T}_1)} (v)
	\end{equs}
where $ \hat{\CT}^{1} = \hat{\CT}^{1}_0 \cup \hat{\CT}^{1}_1 $ and $ \hat{\CT}^{1}_{\text{\tiny{res}}} = \hat{\CT}^{1}_{\text{\tiny{res}},0} \cup \hat{\CT}^{1}_{\text{\tiny{res}},1} $. 
	\end{proposition}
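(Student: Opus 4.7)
The plan is to proceed by induction on the number of inner nodes of $T$, using the Leibniz rule on $\partial_t$ and translating the substitution of $\partial_t v_{k_\ell}$ at each leaf $\ell$ of $T$ into a grafting operation.

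For the base case $T = \CI_{(\mathfrak{t}_1,0)}(\lambda_k)$ (resp.\ $\CI_{(\mathfrak{t}_1,1)}(\lambda_k)$), one has $\Upsilon[T](v)=v_k$ (resp.\ $\bar v_k$), and $\partial_t v_k$ is given directly by \eqref{NLS4}. The non-resonant part of \eqref{NLS4} is precisely $-i\sum_{T_1\in\hat{\CT}^{1,k}_0} e^{it\mathscr{F}(T_1)}\Upsilon[T_1](v)/S(T_1)$, the resonant part matches $\hat{\CT}^{1,k}_{\text{res},0}$, and in this case the grafting $T_1\curvearrowright T$ is just the tree $T_1$ itself since $T$ is a single planted leaf. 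This gives the base case.

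For the inductive step, recall from \eqref{def_Upsilon} that $\Upsilon[T](v)$ is a product of constant factors at the inner nodes (for NLS, $p=v^2\bar v$ so derivatives of $p_a$ are mere combinatorial constants) and factors $v_{k_\ell}$ or $\bar v_{k_\ell}$ at the leaves, which carry the sole time-dependence. Apply the Leibniz rule:
\begin{equs}
\partial_t \Upsilon[T](v) = \sum_{\ell\in L_T} \Upsilon[T](v)\big|_{v_{k_\ell}\,\to\,\partial_t v_{k_\ell}},
\end{equs}
and substitute $\partial_t v_{k_\ell}$ (or $\partial_t \bar v_{k_\ell}$) using \eqref{NLS4}. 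By the recursive definition of $\Upsilon$, replacing a leaf $\ell$ of $T$ by the elementary differential associated to a tree $T_1 \in \hat{\CT}^{1,k_\ell}_0$ (resp.\ $\hat{\CT}^{1,k_\ell}_1$) is the same as computing $\Upsilon[T_1\graft_\ell T](v)$, where $T_1\graft_\ell T$ is the tree obtained from $T$ by grafting $T_1$ at the position of the leaf $\ell$ via a new blue edge. The frequency constraint \eqref{frequencies_identity} that defines $\graft_\ell$ is automatically realised: the root of $T_1$ carries exactly the frequency $k_\ell$ of the leaf it replaces. Summing over $\ell$ realises the full grafting product $T_1\curvearrowright T = \sum_\ell T_1\graft_\ell T$, and grouping $T_1$ over all $\hat{\CT}^{1,k_\ell}_j$ as $\ell$ varies yields the global sum over $\hat{\CT}^1 = \hat{\CT}^1_0\cup\hat{\CT}^1_1$ (resp.\ $\hat{\CT}^1_{\text{res}}$) in the statement.

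The main obstacle, and the step requiring the most care, is matching the symmetry factors: when $T$ has repeated subtrees, several positions $\ell$ produce isomorphic grafted trees $T_1\curvearrowright T$, and the count of such positions must combine with $S(T)S(T_1)$ to reconstitute $S(T_1\curvearrowright T)$ in the denominator. This is the standard compatibility of the elementary-differentials map with the grafting product in the B-series framework (see \cite{CHV10}, where it is shown that $\Upsilon$ is a pre-Lie morphism from $(\CT,\curvearrowright)$ to elementary differentials, and that $S(\cdot)$ converts the symmetric pre-Lie product into the sum over insertion positions). Once this counting identity is in place, combining it with the Leibniz expansion above yields the claimed formula. The resonant contribution $\mathcal{R}^{(1)}$ is treated identically, producing grafts by trees from $\hat{\CT}^1_{\text{res}}$.
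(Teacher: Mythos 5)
Your proposal follows the same route as the paper: write $\Upsilon[T](v)$ as a constant times a product of leaf factors $v_{k_l,\mfp(e_l)}$, apply the Leibniz rule, substitute $\partial_t v_{k_l,\mfp(e_l)}$ from \eqref{NLS4} written as a sum over $\hat{\CT}^{1,k_l}_{\mfp(e_l)}$ and $\hat{\CT}^{1,k_l}_{\text{\tiny{res}},\mfp(e_l)}$, and recognise the leaf substitution as $\Upsilon[T_1\curvearrowright_l T]$, summing over leaves to recover $\curvearrowright$. Two remarks. First, the induction is vestigial: your "inductive step" never invokes the hypothesis, because for $p=v^2\bar v$ one has the closed form $\Upsilon[T](v)=2^{|N_{in,T}|}\prod_{l\in L_T}v_{k_l,\mfp(e_l)}$ and the argument is a direct computation, which is how the paper phrases it. Second, and more importantly, the "main obstacle" you identify is not present in this statement. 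The right-hand side carries only $S(T_1)$ in the denominator, inherited verbatim from the tree-series form of $\partial_t v_{k_l,\mfp(e_l)}$, and $T_1\curvearrowright T=\sum_l T_1\curvearrowright_l T$ is a linear combination over insertion positions with no quotienting by isomorphism classes; no factor $S(T_1\curvearrowright T)$ appears and no recombination with $S(T)$ is required. The only identity needed to pass from the double sum over $l\in L_T$ and $T_1\in\hat{\CT}^{1,k_l}_{\mfp(e_l)}$ to the single sum over $T_1\in\hat{\CT}^1$ is that grafting at a leaf whose frequency and conjugation type do not match $T_1$ gives zero, so the enlarged sum only adds vanishing terms. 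Invoking the pre-Lie morphism property of $\Upsilon$ from \cite{CHV10} to "reconstitute $S(T_1\curvearrowright T)$" would be proving a differently normalised identity; the symmetry-factor bookkeeping you anticipate only enters later, in the proof of Theorem \ref{theorem_dev_N}, through the coefficients $m(T_1,T,\bar T)$.
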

\begin{proof}
	One can easily check that
	\begin{equs}
		 \Upsilon[T](v) = 2^{|N_{in,T}|} \prod_{l \in L_T} v_{k_{l},\mfp(e_l)}
	\end{equs}
where $ k_{l} = \mfp(e_u) $, $ L_T $ are the leaves of $T$, $ N_{in,T}$ are the inner nodes of $T$ with arity $3$ and $ |N_{in,T}| $ its cardinal. Here $ e_l $ is the edge leading to the leaf $l$ and one has
\begin{equs}
	v_{k_l,0} = 	v_{k_l}, \quad 	v_{k_l,1} = 	\overline{v}_{k_l}.
\end{equs}
Therefore, one has
\begin{equs}
	\partial_t \Upsilon[T](v) & = \partial_t 2^{|N_{in,T}|} \prod_{l \in L_T} v_{k_l,\mfp(e_l)}
	\\ & = \sum_{l \in L_T}  2^{|N_{in,T}|}
	\partial_t v_{k_l,\mfp(e_l)} \prod_{\bar{l} \in L_T \setminus \lbrace l\rbrace} v_{k_{\bar{l}},\mfp(e_{\bar{l}})}.
\end{equs}
Now, we use the fact that
\begin{equs}
		\partial_t v_{k_l,\mfp(e_l)} = - i \sum_{T_1 \in \hat{\CT}^{1,k_l}_{\mfp(e_l)}} e^{i  \mathscr{F}(T_1) t} \frac{\Upsilon[T_1](v)}{S(T_1)} - i \sum_{\hat{T}_1 \in \hat{\CT}^{1,k_l}_{\text{\tiny{res}},\mfp(e_l)}}  \frac{\Upsilon[\hat{T}_1](v)}{S(\hat{T}_1)}.
\end{equs}
Then
\begin{equs}
	\partial_t \Upsilon[T](v) & = - i \sum_{l \in L_T}  \sum_{T_1 \in \hat{\CT}^{1,k_l}_{\mfp(e_l)}} e^{i \mathscr{F}(T_1) t} \frac{\Upsilon[T_1](v)}{S(T_1)} 2^{|N_{in,T}|}
	 \prod_{\bar{l} \in L_T \setminus \lbrace l \rbrace} v_{k_{\bar{l}},\mfp(e_{\bar{l}})}
	 \\ & - i \sum_{l \in L_T}  \sum_{\hat{T}_1 \in \hat{\CT}^{1,k_l}_{\text{\tiny{res}},\mfp(e_l)}}  \frac{\Upsilon[\hat{T}_1](v)}{S(\hat{T}_1)}   2^{|N_{in,T}|} 
	 \prod_{\bar{l} \in L_T \setminus \lbrace l \rbrace} v_{k_{\bar{l}},\mfp(e_{\bar{l}})}.
\end{equs}
One observes that for $ T_1 \in \hat{\CT}^{1,k_l}_{\mfp(e_l)}$
\begin{equs}
	 \Upsilon[T_1](v) 2^{|N_{in,T}|}
	\prod_{\bar{l} \in L_T \setminus \lbrace l \rbrace} v_{k_{\bar{l}},\mfp(e_{\bar{l}})} =
	\Upsilon[ T_1 \curvearrowright_{l} T ] (v)
\end{equs}
where $  \curvearrowright_{l}  $ is the grafting only on the leaf $l$. This product satisfies
\begin{equs}
T_1	\curvearrowright T \, = \sum_{l \in L_T} T_1  \curvearrowright_{l} T.
\end{equs}
 Then, one has
\begin{equs}
\sum_{l \in L_T}  \sum_{T_1 \in \hat{\CT}^{1,k_l}_{\mfp(e_l)}} \Upsilon[ T_1 \curvearrowright_{l} T ] (v) =  \sum_{T_1 \in \hat{\CT}^{1}} \Upsilon[ T_1 \curvearrowright T ] (v).
\end{equs}
This is due to the fact that on the right hand side, one run the sums over a bigger space $ \hat{\CT}^{1} $ which is constrained when one performs the grafting as a specific leaf (most of the terms of the sum are zero). In the end, one obtains the final identity:
\begin{equs}
	\partial_t \Upsilon[T](v) =  - \sum_{T_1 \in \hat{\CT}^{1}} i \frac{e^{i \mathscr{F}(T_1) t}}{S(T_1)} \Upsilon[ T_1 \curvearrowright T ](v) -  \sum_{\hat{T}_1 \in \hat{\CT}^{1}_{\text{\tiny{res}}}} i \frac{\Upsilon[ \hat{T}_1 \curvearrowright T ]}{S(\hat{T}_1)} (v).
\end{equs}
	\end{proof}
\begin{theorem} \label{theorem_dev_N}
The main components of the normal form \eqref{N12} are given in Fourier space by
\begin{equs}
	\mathcal{N}_{2}^{(n)}(v)(k) & = \partial_t
	\mathcal{N}_0^{(n)}(v)(k) + \mathcal{N}^{(n+1)}(v)(k)   
 + \mathcal{R}^{(n+1)}(v)(k).
\end{equs}
where
\begin{equs}
	\mathcal{N}_0^{(n)}(v)(k) &= 	\sum_{T \in \hat{\CT}^{n,k}_{0}} \frac{\Upsilon( T)(v)}{S(T)} \Psi( \mathfrak{a}(T) ),
	\\ \mathcal{R}^{(n)}(v)(k) & = \sum_{\hat{T} \in \hat{\CT}^{n,k}_{\text{\tiny{res}},0}} \frac{\Upsilon( \hat{T})(v)}{S(\hat{T})} \mathscr{F}(\hat{T}) \hat{\Psi}( \mathfrak{a}(\hat{T}) ) (t),
	\\
	\mathcal{N}^{(n)}(v)(k) &= \sum_{T \in \hat{\CT}^{n,k}_{0}} \frac{\Upsilon( T)(v)}{S(T)} \mathscr{F}(T) \hat{\Psi}( \mathfrak{a}(T) ) (t). 
\end{equs}
with
\begin{equs} \label{def_hat_Psi}
		\hat{\Psi}( T_m .... T_1 ) (t) :=  e^{i \mathscr{F}(T_1) t} \frac{ \Psi( T ) (t)}{\sum_{i=1}^m\mathscr{F}(T_i)  } .
\end{equs}
Moreover, one has
\begin{equs}
		\mathcal{N}^{(n)}(v)(k) = 	\mathcal{N}^{(n)}_1(v)(k) + 	\mathcal{N}^{(n)}_2(v)(k)
\end{equs}
where
\begin{equs}
	\mathcal{N}^{(n)}_2(v)(k) = 	 \sum_{T \in \hat{\CT}^{n,k}_{0}} \frac{\Upsilon( T)(v)}{S(T)} i \mathscr{F}(T) \Psi( \mathfrak{a}(T) ) (t).
\end{equs}
\end{theorem}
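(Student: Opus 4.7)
The proof will proceed by induction on $n$. The base case $n=1$ is essentially the computation carried out in the paper between equations \eqref{NLS4} and \eqref{N12}: starting from the expression of $\mathcal{N}_2^{(1)}$ as a sum over $\hat{\CT}^{1,k}_0$ with the indicator $\mathbf{1}_{A(\mathfrak{a}(T_1))^c}$, one integration by parts in time produces a total derivative plus a correction; using the PDE \eqref{NLS4} to substitute for $\partial_t v_{k_j}$ in the correction yields $\mathcal{N}^{(2)} + \mathcal{R}^{(2)}$. Checking that the coefficients agree with the formulas in the theorem (for $n=1$) amounts to evaluating $\Psi$ and $\hat{\Psi}$ on length-one words, which is immediate from the definitions.

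For the inductive step, I start from the closed expression $\mathcal{N}^{(n)}_2(v)(k) = \sum_{T \in \hat{\CT}^{n,k}_0} \frac{\Upsilon(T)(v)}{S(T)} i\mathscr{F}(T)\,\Psi(\mathfrak{a}(T))$. The crucial observation is that $\Psi(w)(t)$ depends on $t$ only through the factor $e^{it\sum_j \mathscr{F}(T_j)}$, and the additivity of $\mathscr{F}$ along the tree structure gives $\sum_j \mathscr{F}(T_j) = \mathscr{F}(T)$, hence $\partial_t \Psi(\mathfrak{a}(T)) = i\mathscr{F}(T)\,\Psi(\mathfrak{a}(T))$. Integration by parts then yields
\begin{equs}
\mathcal{N}^{(n)}_2(v)(k) = \partial_t\!\left[\sum_T \frac{\Upsilon(T)(v)}{S(T)}\Psi(\mathfrak{a}(T))\right] - \sum_T \frac{\Psi(\mathfrak{a}(T))}{S(T)}\,\partial_t \Upsilon(T)(v).
\end{equs}
The first bracket is $\partial_t \mathcal{N}_0^{(n)}(v)(k)$ by the theorem's definition. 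For the second sum, I apply Proposition \ref{morphism_grafting} to rewrite $\partial_t \Upsilon(T)$ as a sum over graftings $\sigma \curvearrowright T$ where $\sigma$ runs over $\hat{\CT}^1$ or $\hat{\CT}^1_{\text{\tiny{res}}}$, and then reindex by $T' := \sigma \curvearrowright T$, which ranges over $\hat{\CT}^{n+1}$ or $\hat{\CT}^{n+1}_{\text{\tiny{res}}}$ respectively.

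The combinatorial heart of the argument is to identify the reindexed sum with $\mathcal{N}^{(n+1)} + \mathcal{R}^{(n+1)}$. By the adapted version of Proposition \ref{alternative_arb} in the Fourier-decorated setting (equations \eqref{arbo_new} and the analogue of \eqref{new_identity_arb}), the arborification $\mathfrak{a}(T')$ decomposes, when computed via $\curvearrowright^*$, as a sum over all pairs $(\sigma, T)$ with $\sigma \curvearrowright T = T'$ of the concatenation $\sigma \cdot \mathfrak{a}(T)$ viewed as words. Combining this with the definition \eqref{def_hat_Psi} of $\hat{\Psi}$, the weight $\frac{e^{i\mathscr{F}(\sigma)t}\,\Psi(\mathfrak{a}(T))}{S(\sigma)S(T)}$ accumulated from each grafting pair collapses, after multiplication by $\mathscr{F}(T') = \mathscr{F}(\sigma) + \mathscr{F}(T)$ and the associated denominator, into $\frac{\Upsilon(T')(v)}{S(T')}\,\mathscr{F}(T')\,\hat{\Psi}(\mathfrak{a}(T'))$, producing exactly $\mathcal{N}^{(n+1)}$ (non-resonant case) or $\mathcal{R}^{(n+1)}$ (resonant case). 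The decomposition $\mathcal{N}^{(n)} = \mathcal{N}^{(n)}_1 + \mathcal{N}^{(n)}_2$ and the explicit form of $\mathcal{N}^{(n)}_2$ then follow from splitting each word $w \in \mathfrak{a}(T)$ according to whether the outermost partial sum of frequencies lies in $A(w)$ or $A(w)^c$, which matches the indicator structure built into $\Psi$.

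The main obstacle will be reconciling the symmetry factors: the naive coefficient $1/(S(\sigma)S(T))$ appearing in the doubly indexed sum must, after summation over all decompositions $T' = \sigma \curvearrowright T$, reproduce $1/S(T')$ times the correct multiplicity — and exactly this identity is encoded in Proposition \ref{identi_c} together with the adjointness relation $\langle \curvearrowright^*\tau, \sigma\otimes T\rangle = \langle \tau, \sigma \curvearrowright T\rangle$ of the inner product weighted by $S(\tau)$. A secondary subtlety is that the alphabet $A$ here is not the node set but the set of elementary blue-rooted trees, so one must verify that the projector $P_A$ in \eqref{arbo_new} plays precisely the role that $P_\bullet$ plays in Proposition \ref{alternative_arb}; this is a direct consequence of the recursive structure of the modified Butcher-Connes-Kreimer coproduct \eqref{BCK_new}, which cuts only blue edges.
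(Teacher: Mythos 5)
Your proposal follows essentially the same route as the paper's proof: induction on $n$, the observation that $i\mathscr{F}(T)\Psi(\mathfrak{a}(T)) = \partial_t\Psi(\mathfrak{a}(T))$, integration by parts, Proposition \ref{morphism_grafting} to express $\partial_t\Upsilon(T)$ via graftings, and the identification of the reindexed sum through the adjoint $\curvearrowright^{*}$ and Proposition \ref{alternative_arb}, with the symmetry factors handled exactly as in the paper via the inner-product adjointness (the coefficients $m(T_1,T,\bar{T})$). The two subtleties you flag at the end — the $1/(S(\sigma)S(T))$ versus $1/S(T')$ bookkeeping and the role of $P_A$ in place of $P_{\bullet}$ — are precisely the points the paper's argument addresses, so the proposal is correct and matches the paper's proof.
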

\begin{proof}
We proceed by recurrence on $n$. We suppose the property true for $ n  \in \mathbb{N}^*$. One has
\begin{equs}
	\mathcal{N}_{2}^{(n)}(v)(k)  &=    \sum_{T \in \hat{\CT}^{n,k}_{0}}  \frac{\Upsilon( T)(v)}{S(T)} i \mathscr{F}(T) \Psi( \mathfrak{a}(T) ) (t)
\\
	& =    \sum_{T \in \hat{\CT} ^{n,k}_{0}} \frac{\Upsilon( T)(v)}{S(T)} \partial_t \Psi( \mathfrak{a}(T) ) (t).
\end{equs}
This is due to the fact that $ \Psi( \mathfrak{a}(T) ) (t) $ depends only on $t$ via the term $ e^{i \mathscr{F}(T) t} $ see \eqref{Phi}. Moreover, $ \mathfrak{a}(T) $ does not change the decorations on the edges of $T$ that are needed for computing $\mathscr{F}(T)$. Using the Leibniz rule, one obtains
\begin{equs}
		\mathcal{N}_{2}^{(n)}(v)(k)	 & =  \partial_t
	\sum_{T \in \hat{\CT}^{n,k}_{0}} \frac{\Upsilon( T)(v)}{S(T)} \Psi( \mathfrak{a}(T) ) (t) - 
	\sum_{T \in \hat{\CT}^{n,k}_{0}} \partial_t \frac{\Upsilon( T)(v)}{S(T)} \Psi( \mathfrak{a}(T) ) (t).
\end{equs}
One has from Proposition~\ref{morphism_grafting}
\begin{equs}
	\partial_t \Upsilon[T](v) = -  \sum_{T_1 \in \hat{\CT}^{1}} i \frac{e^{i \mathscr{F}(T_1) t}}{S(T_1)} \Upsilon[ T_1 \curvearrowright T ](v) -  \sum_{\hat{T}_1 \in \hat{\CT}^{1}_{\text{\tiny{res}}}} i \frac{\Upsilon[ \hat{T}_1 \curvearrowright T ]}{S(\hat{T}_1)} (v).
\end{equs}
Then,
\begin{equs}
&	\sum_{T \in \hat{\CT}^{n,k}_{0}} \partial_t \frac{\Upsilon( T)(v)}{S(T)} \Psi( \mathfrak{a}(T) ) (t)
\\ & = -	i\sum_{T \in \hat{\CT}^{n,k}_{0}}   \sum_{T_1 \in \hat{\CT}^{1}} e^{i \mathscr{F}(T_1) t}\frac{\Upsilon( T_1 \curvearrowright T)(v)}{ S(T_1)S(T)} \Psi( \mathfrak{a}(T) ) (t)
\\ & - i\sum_{T \in \hat{\CT}^{n,k}_{0}}   \sum_{\hat{T}_1 \in \hat{\CT}^{1}_{\text{\tiny{res}}}} \frac{\Upsilon( \hat{T}_1 \curvearrowright T)(v)}{ S(\hat{T}_1)S(T)}\Psi( \mathfrak{a}(T) ) (t).
	\end{equs}
	We compute the first term on the right hand side of the previous equality. The second term can be treated the same. From \eqref{def_hat_Psi}, one has	\begin{equs} \label{rewrite_T_1}
		\hat{\Psi}( T_1\mathfrak{a}(T) ) (t) :=   e^{i \mathscr{F}(T_1) t} \frac{ \Psi( \mathfrak{a}(T) ) (t)}{\mathscr{F}(T_1) + \mathscr{F}(T) } .
	\end{equs}
Then,
\begin{equs} \label{grafting_coeff}
	\frac{T_1}{S(T_1)} \curvearrowright \frac{T}{S(T)} = \sum_{\bar{T} \in \hat{\CT}^{n+1,k}_{0}} \frac{m(T_1,T,\bar{T})}{S(\bar{T})} \bar{T}
\end{equs}
for some coefficients $ m(T_1,T,\bar{T}) $.
 One has using the innner product on decorated trees:
\begin{equs}
	\big\langle 	\frac{T_1}{S(T_1)} \curvearrowright \frac{T}{S(T)} , \bar{T}  \big\rangle = \big\langle \frac{T_1}{S(T_1)} \otimes \frac{T}{S(T)} , \curvearrowright^{*} \bar{T}  \big\rangle = 	m(T_1,T,\bar{T}).  
\end{equs} 
A similar argument has been used for the composition of Regularity Structures B-series in \cite{B23}.
This yields by first using \eqref{rewrite_T_1} and \eqref{grafting_coeff}
	\begin{equs}
		& - i	\sum_{T \in \hat{\CT}^{n,k}_{0}}   \sum_{T_1 \in \hat{\CT}^{1}} e^{i \mathscr{F}(T_1) t}\frac{\Upsilon( T_1 \curvearrowright T)(v)}{ S(T_1)S(T)} \Psi( \mathfrak{a}(T) ) (t) \\
	& =	- i \sum_{T \in \hat{\CT}^{n,k}_{0}}   \sum_{T_1 \in \hat{\CT}^{1}} \frac{\Upsilon( T_1 \curvearrowright T)(v)}{S(T_1)S(T)} \left(  \mathscr{F}(T_1) + \mathscr{F}(T) \right) \hat{\Psi}( T_1\mathfrak{a}(T) ) (t)
	\\ & = - i\sum_{\bar{T} \in \hat{\CT}^{n+1,k}_{0}} \sum_{T \in \hat{\CT}^{n,k}_{0}}   \sum_{T_1 \in \hat{\CT}^{1}} \frac{\Upsilon( \bar{T})(v)}{S(\bar{T})} m(T_1,T,\bar{T}) \  \mathscr{F}(\bar{T})   \hat{\Psi}( T_1\mathfrak{a}(T) ) (t) 
\end{equs}
where we have used the fact that 
\begin{equs}
	 \mathscr{F}(T_1) + \mathscr{F}(T)  = \mathscr{F}(\bar{T}) 
\end{equs}
when $ m(T_1,T,\bar{T}) \neq 0 $. We continue the computation by using
\begin{equs}
\sum_{T \in \hat{\CT}^{n,k}_{0}}   \sum_{T_1 \in \hat{\CT}^{1}} m(T_1,T,\bar{T})  T_1\mathfrak{a}(T) =  \mathcal{M}_c\left(   P_{\bullet} \otimes \mathfrak{a}   \right) \curvearrowright^{*} \bar{T}.
\end{equs} 
which gives
\begin{equs} 
		& - i	\sum_{T \in \hat{\CT}^{n,k}_{0}}   \sum_{T_1 \in \hat{\CT}^{1}} e^{i \mathscr{F}(T_1) t}\frac{\Upsilon( T_1 \curvearrowright T)(v)}{ S(T_1)S(T)} \Psi( \mathfrak{a}(T) ) (t) \\
	& = -i	\sum_{\bar{T} \in \hat{\CT}^{n+1,k}_{0}}    \frac{\Upsilon( \bar{T})(v)}{S(\bar{T})} \mathscr{F}(\bar{T})  \hat{\Psi}(  \mathcal{M}_{\tiny{\text{c}}}\left(   P_{\bullet} \otimes \mathfrak{a}   \right) \curvearrowright^{*} \bar{T}) (t)
		\\ 
	& = -i	\sum_{\bar{T} \in \hat{\CT}^{n+1,k}_{0}}    \frac{\Upsilon( \bar{T})(v)}{S(\bar{T})} \mathscr{F}(\bar{T}) \hat{\Psi}( \mathfrak{a}(\bar{T})) (t)
\end{equs}
where we have used Proposition \ref{alternative_arb}
\begin{equs}
	\mathcal{M}_{\tiny{\text{c}}}\left(   P_{\bullet} \otimes \mathfrak{a}   \right) \curvearrowright^{*}  = \mathfrak{a}.
\end{equs}
A similar computation allows us to get
\begin{equs}
-i	\sum_{T \in \hat{\CT}^{n,k}_{0}}   \sum_{\hat{T}_1 \in \hat{\CT}^{1}_{\text{\tiny{res}}}} \frac{\Upsilon( \hat{T}_1 \curvearrowright T)(v)}{ S(\hat{T}_1)S(T)}\Psi( \mathfrak{a}(T) ) (t) = - i	\sum_{\bar{T} \in \hat{\CT}^{n+1,k}_{\text{\tiny{res}},0}}    \frac{\Upsilon( \bar{T})(v)}{S(\bar{T})} \mathscr{F}(\bar{T}) \hat{\Psi}( \mathfrak{a}(\bar{T})) (t).
\end{equs}
In the end, one has the following identity
\begin{equs}
	\mathcal{N}_{2}^{(n)}(v)(k) & = \partial_t
\sum_{T \in \hat{\CT}^{n,k}_{0}} \frac{\Upsilon( T)(v)}{S(T)} \Psi( \mathfrak{a}(T) ) (t) +    \sum_{T \in \hat{\CT}^{n+1,k}_{0}} \frac{\Upsilon( T)(v)}{S(T)} i \mathscr{F}(T) \hat{\Psi}( \mathfrak{a}(T) ) (t)
\\ &  + \sum_{\hat{T} \in \hat{\CT}^{n+1,k}_{\text{\tiny{res}},0}} \frac{\Upsilon( \hat{T})(v)}{S(\hat{T})} i \mathscr{F}(\hat{T}) \hat{\Psi}( \mathfrak{a}(\hat{T}) ) (t).
\end{equs}
Now, we can set 
\begin{equs}
	\mathcal{N}_0^{(n)}(v)(k) &= 	\sum_{T \in \hat{\CT}^{n,k}_{0}} \frac{\Upsilon( T)(v)}{S(T)} \Psi( \mathfrak{a}(T) )(t), 
	\\ \mathcal{R}^{(n+1)}(v)(k) & = \sum_{\hat{T} \in \hat{\CT}^{n+1,k}_{\text{\tiny{res}},0}} \frac{\Upsilon( \hat{T})(v)}{S(\hat{T})} i \mathscr{F}(\hat{T}) \hat{\Psi}( \mathfrak{a}(\hat{T}) ) (t),
	\\
	 \mathcal{N}^{(n+1)}(v)(k) &= \sum_{T \in \hat{\CT}^{n+1,k}_{0}} \frac{\Upsilon( T)(v)}{S(T)} i \mathscr{F}(T) \hat{\Psi}( \mathfrak{a}(T) ) (t). 
\end{equs}
We finish by decomposing $ \mathcal{N}^{(n+1)}(v)(k)$ into
\begin{equs}
	\mathcal{N}^{(n+1)}(v)(k) = 	\mathcal{N}^{(n+1)}_1(v)(k) + \mathcal{N}^{(n+1)}_2(v)(k)
\end{equs}
where 
\begin{equs}
	\mathcal{N}^{(n+1)}_2(v)(k) &= \sum_{T \in \hat{\CT}^{n+1,k}_{0}} \frac{\Upsilon( T)(v)}{S(T)} i \mathscr{F}(T) \Psi( \mathfrak{a}(T) ) (t)
\end{equs}
which concludes the proof.
\end{proof}

\end{document}